\documentclass[10pt,a4paper]{article}
\title{\bf  On the $W$-entropy and  Shannon entropy power on RCD$(K, N)$ and RCD$(K, n, N)$ spaces\\
 }
\author{
Xiang-Dong Li\thanks{Research of X.-D. Li has been supported by National Key R\&D Program of China (No. 2020YF0712700) and NSFC No. 12171458.},  
	\quad Enrui Zhang
	}
\date{}

\usepackage{amsfonts}
\usepackage{mathrsfs}

\usepackage{amssymb, latexsym}

\usepackage{graphicx}

\usepackage{amsmath}

\usepackage{amssymb,amsmath,amsfonts,amsthm,array,bm,color}

\setlength{\hoffset}{-0.4mm} \setlength{\voffset}{-0.4mm}
\setlength{\textwidth}{155mm} \setlength{\textheight}{235mm}
\setlength{\topmargin}{0mm} \setlength{\oddsidemargin}{0mm}
\setlength{\evensidemargin}{0mm} \setlength\arraycolsep{1pt}
\setlength{\headsep}{0mm} \setlength{\headheight}{0mm}

\def\Ric{{\rm Ric}}

\def\<{\langle}
\def\>{\rangle}

\newtheorem{theorem}{Theorem}[section]
\newtheorem{lemma}[theorem]{Lemma}       
\newtheorem{corollary}[theorem]{Corollary}

\newtheorem{remark}[theorem]{Remark}

\newtheorem{definition}[theorem]{Definition}

\parindent=0mm
\usepackage{indentfirst}
\setlength{\parindent}{2em}
\begin{document}

\maketitle
\makeatletter 
\renewcommand\theequation{\thesection.\arabic{equation}}
\@addtoreset{equation}{section}
\makeatother 

\vspace{-6mm}
\begin{abstract} In this paper, we prove  the $W$-entropy formula and the monotonicity and rigidity theorem of the $W$-entropy for the heat flow on  RCD$(K, N)$ and RCD$(K, n, N)$ spaces $(X, d, \mu)$, where $K\in \mathbb{R}$, $n\in \mathbb{N}$ is the geometric dimension of $(X, d, \mu)$ and $N\geq n$. We also prove the $K$-concavity of the Shannon entropy power on RCD$(K, N)$ spaces. 
As an application, we derive the Shannon entropy isoperimetric inequality and the Stam type logarithmic Sobolev inequality on RCD$(0, N)$ spaces with maximal volume growth condition. Finally, we prove the rigidity theorem for the Stam type logarithmic Sobolev inequality with sharp constant on noncollapsing RCD$(0, N)$ spaces.
\end{abstract}

\vskip 2mm

\textbf{Keywords}: $W$-entropy, Shannon entropy power,  RCD$(K, N)$ and RCD$(K, n, N)$ spaces, entropy isoperimetric inequality, logarithmic Sobolev inequality,  noncollapsing.
\medskip

\textbf{Mathematics Subject Classification}: Primary 53C23, 53C21; Secondary 60J60, 60H30.

\section{Introduction}\label{sect1}

 In 2002,  G. Perelman \cite{P1} introduced the $\mathcal{W}$-entropy for the Ricci flow and proved its monotonicity along the conjugate heat equation.  More precisely, let $(M, g)$ be a closed $n$-dimensional Riemannian 
manifold, $f\in C^\infty(M)$. The $\mathcal{W}$-entropy functional  is defined by
\begin{equation*}
	\mathcal{W}(g(t),f(t),\tau):=\int_M \Big[\tau(R+|\nabla f|^2)+f-n\Big]\frac{e^{-f}}{(4\pi\tau)^{\frac{n}{2}}}dv ,
\end{equation*}
where $\tau >0$ satisfies $\frac{d\tau}{dt}=-1$, $R$ and $dv$ denote the scalar curvature and the Riemannian 
 volume form, respectively. Perelman \cite{P1} showed  that if  $g(t)$ and $f(t)$  satisfy following equations
\begin{equation*}
	\begin{split}
		\partial_t g &=-2Ric ,\\
		\partial_t f &=-\Delta f +|\nabla f |^2-R+\frac{n}{2\tau},
	\end{split}
\end{equation*}
then
\begin{equation*}
	\frac{d\mathcal{W}}{dt}=2\tau \int_M\Big\|Ric+\nabla^2f-\frac{g}{2\tau} \Big\|^2_{\operatorname{HS}}\frac{e^{-f}}{(4\pi\tau)^{\frac{n}{2}}}
	dv\geq 0.
\end{equation*}
Here $\|\cdot\|_{\rm HS}$ denotes the Hilbert-Schmidt norm.
In particular, $\frac{d\mathcal{W}}{dt}=0$ at some $t>0$ if and only if $(M, g(t), f(t))$ is a shrinking Ricci soliton 

\begin{equation*}
	Ric+\nabla^2f=\frac{g}{2\tau}. 
\end{equation*}

Inspired by Perelman's groundbreaking contributions to the study of $W$-entropy formula, many authors have extended the $W$-entropy formula to various geometric flows. In \cite{Ni1}, Ni derived the  $W$-entropy for the heat equation on  complete Riemannian manifolds. More precisely, let $(M, g)$ be a complete Riemannian manifold, and 

\begin{equation*}
	u(x, t)=\frac{e^{-f(x, t)}}{(4\pi t)^{\frac{n}{2}}}
\end{equation*}
be a positive solution to the  heat equation
\begin{equation*}
	\partial_t u=\Delta u \label{Heat1}
\end{equation*}
with $\int_M u(x,0)dv=1$. Define the $W$-entropy by
\begin{equation}\label{Wentopy}
	W(f, t):=\int_M \left( t|\nabla f |^2+f-n\right)\frac{e^{-f(x, t)}}{(4\pi t)^{\frac{n}{2}}} dv.
\end{equation}
Then the following $W$-entropy formula holds
\begin{equation*}
	\frac{d}{d t}W(f, t)=-2t\int_M \left(\Big\|\nabla^2 f-\frac{g}{2t} \Big\|_{\operatorname{HS}}^2
	+\Ric(\nabla f,\nabla f)\right)u dv. \label{WNi}
\end{equation*}
In particular,  the $W$-entropy is non-increasing when $\Ric\geq 0$.

Let $(M, g)$ be a Riemannian manifold with a weighted volume measure $d\mu=e^{-V}dv$, where $V\in C^2(M)$, $dv$ is the Riemannian volume measure. Let $\nabla$ be the Levi-Civita connection $(M, g)$, and 
$\Delta={\rm Tr}\nabla^2$ be the Laplace-Beltrami operator on $(M, g)$. Let 
$$L=\Delta-\nabla V\cdot \nabla$$ be the weighted Laplacian (called also the Witten Laplacian) on $(M, g, \mu)$, 
which is the infinitesimal generator of the following Dirichlet form in the sense that
\begin{eqnarray*}
\mathcal{E}(f, g)={1\over 2}\int_M \langle \nabla f, \nabla g\rangle d\mu=-{1\over 2}\int_M (L f)gd\mu=-{1\over 2}\int_M  f(L g)d\mu, \ \ \ \forall ~ f, g\in C_0^\infty(M).
\end{eqnarray*}
In \cite{BE1985}, Bakry and Emery proved the generalized 
Bochner formula holds for all $f\in C^2(M)$
\begin{eqnarray}
{1\over 2}L\|\nabla f\|_2^2=\langle \nabla f, \nabla L\rangle +\|\nabla^2 f\|_{\rm HS}^2+Ric(L)(\nabla f, \nabla f), 
\end{eqnarray}
where $\nabla^2 f$ is the Hessian of $f$ with respect to the Levi-Civita connection on the 
Riemannian manifold $(M, g)$,  $\|\nabla^2 f\|_{\rm HS}$ is its Hilbert-Schmidt norm, and 
\begin{eqnarray}
Ric(L)=Ric+\nabla^2 V
\end{eqnarray}
is the so-called infinite dimensional Bakry-Emery Ricci curvature associated with $L$ on $(M, g, \mu)$.

By the generalized Bochner formula for the weighted Laplacian (i.e., the Witten Laplacian), we have
\begin{eqnarray}
\Gamma_2(f, f)=\|\nabla^2 f\|_{\rm HS}^2+Ric(L)(\nabla f, \nabla f). 
\end{eqnarray}

For any $m\geq n$, the $m$-dimensional Bakry-Emery Ricci curvature associated with $L$ on $(M, g, \mu)$ is defined by
\begin{eqnarray}
Ric_{m, n}(L)=Ric+\nabla^2 V-{\nabla V\otimes \nabla V\over m-n}.
\end{eqnarray}
By \cite{BE1985}, $(M, g, \mu)$ satisfies the CD$(K, m)$ curvature-dimension condition if and only if
\begin{eqnarray}
Ric_{m, n}(L)\geq K.
\end{eqnarray}

Now we introduce the $W$-entropy on RCD$(K, N)$ space. For $K \in \mathbb{R} $ and $N\geq 1$,  following \cite{Li2012, LiLi2015PJM, LiLi2024TMJ}, let
\begin{equation*}
	H_{N,K}(u):=-\int_X u\log u d\mu-\frac{N}{2}\Big(\log(4\pi et)\Big)-\frac{N}{2}Kt\Big(1+\frac{1}{6}Kt\Big),
\end{equation*}
and we define the $W$-entropy on RCD$(K, N)$ space by
\begin{equation*}
	W_{N,K}(u):=\frac{d}{dt}\Big(tH_{N,K}(u)\Big).
\end{equation*}
In particular, when $K=0$, the $W$-entropy on RCD$^*(0, N)$ space has been introduced in \cite{KL,Li2012} as follows
\begin{equation*}
	W_N(u):=\frac{d}{dt}\Big(tH_N(u)\Big),
\end{equation*}
where 
\begin{eqnarray*}
	H_N(u)=-\int_M u(t)\log u(t)d\mu-\frac{N}{2}\Big(1+\log(4\pi t)\Big).                                                                                                                                                                                                                                                                                                                                                                                                                                                                                                                                                                                                                                                                                                                                                                                                                                                                                                                                                                                                                                                                                                                                                                                                                                                                                                                                                                                                                                                                                                                                                                                                                                                                                                                                                                                                                                                                                                                                                                                                                                                                                                                                                                                                                                                                                                                                                                                                                                                                                                                                                                                                                                                                                                                                                                                                                                                                                                                                                                                                                                                                                                                                                                                                                                                                                                                                                                                                                                                                                                                                                                                                                                                                                                                                                                                                                                                                                                                                                                                                                                                                                                                                                                                                                                                           \end{eqnarray*}
Note that when $N\in \mathbb{N}$, $H_N(u)$ is the difference of the Shannon entropy of the heat kernel measure $u(t)d\mu$ and the Shannon entropy of the Gaussian heat kernel measure $\gamma_N(t)dx$.

In \cite{Li2012}, the first named author of this paper proved the $W$-entropy formula for the heat equation $\partial_t u=Lu$ on complete Riemannian manifolds with CD$(0, m)$-condition, i.e., $Ric_{m, n}(L)\geq 0$.  
More precisely, let $(M, g)$ be a complete Riemannian manifold with bounded geometric condition \footnote{Here, we say that $(M, g)$ satisfies the bounded geometry condition if the Riemannian curvature tensor $\mathrm{Riem}$ and its covariant derivatives $\nabla^k \mathrm{Riem}$ are uniformly bounded on $M$ for $k = 1, 2, 3$.}, $V\in C^4(M)$ with $\nabla V\in C_b^k(M)$ for $k=1, 2, 3$, let 
  \begin{equation*}
	u(x, t)=\frac{e^{-f(x, t)}}{(4\pi t)^{\frac{m}{2}}}
\end{equation*}
be a positive and smooth solution to the heat equation
\begin{equation}
	\partial_t u=Lu
	 \label{Heat2}
\end{equation}
with $\int_M u(x, 0)d\mu(x)=1`$. 
Define the $W$-entropy by
\begin{equation}\label{Wentopy}
	W_m(f, t):=\int_M \left( t|\nabla f |^2+f-m\right)\frac{e^{-f(x, t)}}{(4\pi t)^{\frac{m}{2}}} d\mu.
\end{equation}
Then the  following $W$-entropy formula holds
\begin{eqnarray}
	\frac{d}{dt}W_m(f, t)&=&-2t\int_M \left(\Big\|\nabla^2 f-\frac{g}{2t} \Big\|_{\operatorname{HS}}^2
	+\Ric_{m, n}(L)(\nabla f,\nabla f)\right)u d\mu \nonumber\\
	& &\hskip1cm -{2t\over m-n}\int_M\Big|\nabla V\cdot \nabla f-{m-n\over 2t}\Big|^2 u d\mu, \label{WLi}
\end{eqnarray}
In particular,  the $W$-entropy is non-increasing when $\Ric_{m, n}(L)\geq 0$.

In \cite{Li2012}, the first named author gave a probabilistic interpretation of Perelman's $\mathcal{W}$-entropy for the  Ricci flow. More precisely,  Perelman's $\mathcal{W}$-entropy can be understood as follows:
\begin{equation*}
	\mathcal{W}(u,\tau):=\frac{d}{d\tau}(\tau(H(u)-H(\gamma_n)) ),
\end{equation*}
where $\gamma_n:=\frac{e^{-\frac{|x|^2}{4\tau}}}{(4\pi\tau)^{\frac{n}{2}}}$ , $u=\frac{e^{-f}}{(4\pi\tau)^{\frac{n}{2}}}$ and $H(u)=-\int_M u\log udv$ is the Shannon entropy of the probability measure $udv$. Moreover, 
Li  \cite{LiLi2015PJM,Li2012}  proved the rigidity theorem for the $W$-entropy on complete Riemannian manifolds 
 with $\Ric_{m, n}(L)\geq 0$. 

Subsequently, S. Li and Li \cite{LiLi2015PJM, LiLi2018JFA} proved the $W$-entropy formula for the heat equation $\partial_t u=Lu$ on complete Riemannian manifolds with CD$(K, m)$-condition, i.e., $Ric_{m, n}(L)\geq K$.
More precisely, let $u$ be a positive solution to the heat equation $\partial_t u=L u$ on a complete Riemannian manifold $(M, g, \mu)$ with bounded geometry condition and $V\in C^4(M)$ such that $\nabla V\in C_b^3(M)$. Then
\begin{equation}
\begin{aligned}
\frac{d}{d t} W_{m, K}(u)
&= -2t\int_X \Big[\left\|\nabla^2 \log u+{1\over 2}\left({1\over t}-K\right)g\right\|_{\rm HS}^2+(Ric_{m, n}(L)-Kg)(\nabla \log u, \nabla \log u)\Big]ud\mu\\		
& \hskip2cm -{2t\over m-n}\int_X \Big[\nabla V\cdot\nabla \log u-{m-n\over 2}\left({1\over t}-K\right)\Big]^2ud\mu. \label{WfnKm}
\end{aligned}
\end{equation}
In particular, if $Ric_{m, n}(L)\geq K$, then ${d\over dt}W_{m, K}(u)\leq 0$.

When $m\in \mathbb{N}$ and $m>n$, S. Li and the first named author \cite{LiLi2015PJM} gave  an alternative proof of $(\ref{WLi})$ using Ni's $W$-entropy formula $(\ref{WNi})$ and a warped product metric on $\mathcal{M}=M\times \mathbb{R}^{m-n}$. In \cite{LiLi2015PJM, LiLi2018JFA}, S. Li and Li further proved the 
$W$-entropy formula on the $(0, m)$ and $(K, m)$-super Ricci flows. Furthermore, Lei, S. Li, and Li \cite{LiLei} established the corresponding $W$-entropy formula for the porous medium equation $\partial_tu=\Delta u^p$ with $p>1$.

Very recently, S. Li and the first named author \cite{LiLi2015PJM} proved the $K$-concavity of the Shannon entropy power on complete Riemannian manifolds with $\Ric_{m, n}(L)\geq K$. To save the length of the Introduction, we leave the  background of the entropy power inequality in Section 4 below.

It is natural to ask an interesting question whether one can  extend the monotonicity of $W$-entropy to 
 more singular spaces than smooth Riemannian manifolds. In the literature,  the notion of spaces with a lower bound of  Ricci curvature and an upper bound of dimension has been extended from Riemannian manifolds to metric measure spaces by means of optimal transport \cite{AGS2014,LV2009, Sturm2006}.  To facilitate a comparison between our results and those in the smooth case, we begin by reviewing the Bakry-\'Emery curvature-dimension condition for $L$, that is to say
\begin{equation}\label{BE}
	\frac{1}{2}L|\nabla f |^2 \geq \<\nabla f,\nabla L f\>+K |\nabla f|^2+\frac{1}{N}|L f|^2
\end{equation}
holds for any $f\in C^3(M)$, if and only if 
$$\Ric_{N, n}(L) \geq K$$ and $N\geq n$.
Roughly speaking, $m$ plays the role of (an upper bound of) the dimension of the operator $L$ instead of the dimension of the manifold $n={\rm dim}~M$. 
Indeed, $(\ref{BE})$ can be used as an abstract generalization of the condition ``$\Ric\geq K$ and $n={\rm dim} M\leq {\rm dim}~(L)\leq N$''.

 In \cite{KL}. Kuwada and the first named author proved 
 the monotonicity of $W$-entropy on the so-called RCD$(0, N)$ spaces and provided the associated rigidity results. For its precise definition and statement, see Section 2 and Section 6 below. As far as we know, this is the first result on 
the $W$-entropy and related topics on RCD spaces. 

Motivated by very  increasing interest of the study on the geometry and analysis on RCD spaces, it is natural and interesting to ask a question whether one can extend Kuwada-Li's and S.Li-Li's results to RCD$(K, N)$ spaces. 
The purpose of this paper is to study the $W$-entropy  and the Shannon entropy power associated with the heat equation on RCD$(K, N)$ spaces. 
The main results of this paper are the monotonicity of $W$-entropy and the $K$-concavity of Shannon entropy power for heat equation on RCD$(K, N)$ spaces. See Section 3 below. Moreover, as an application of the main results, we prove the entropy isoperimetric inequality and the Stam type logarithmic Sobolev inequality on RCD$(0, N)$ spaces with maximal volume growth condition.

The structure of this paper is as follows:
In Section 2, we introduce some basic notions related to RCD spaces.
In Section 3, we state the main results of this paper.
In Section 4,  we prove the entropy dissipation equalities and inequality on RCD$(K, N)$ spaces. In Section 5, we prove the $W$-entropy formula and its monotonicity on RCD$(K, N)$ spaces.  In Section 6, we prove the $K$-concavity of the Shannon entropy power on RCD$(K, N)$ spaces. In Section 7, as an application,  we prove the Shannon entropy isoperimetric inequality and the Stam type logarithmic Sobolev inequality on RCD$(0, N)$  spaces with maximal volume growth condition.  In Section 8, we prove the rigidity theorem for the sharp Stam type logarithmic Sobolev inequality on noncollapsing RCD$(0, N)$ spaces. In Section 9, we extend two other entropy formulas to RCD spaces. 
 
 \medskip
 
This work has been done since November 30, 2023 when the first named author wrote two pages' notes in a draft and suggested the second name author to complete it for a part of his PhD thesis. On February 11, 2025, when a draft version of this paper is nearly to be completed, the first named author received an email from Dr. Camillo Brena who asked some questions on the $W$-entropy formula in \cite{Li2012} and told us that he is trying to extend the $W$-entropy formula from Riemannian manifolds to RCD spaces.  The first named author mentioned to Dr. Brena that we are nearly finishing this paper but it is still in revision.  Due to some reasons, we have only finished the revision of this paper until now and we were informed by Dr. Yu-Zhao Wang that Brena's paper has been posed on arxiv. See C. Brena: Perelman's entropy and heat kernel bounds on RCD spaces,  arXiv:2503.03017v1 [math.DG] 4 Mar 2025.  We would like to say that our work is independent of Brena's work and we have not yet read his paper. 

\medskip

\noindent{\bf Acknowledgement}. The authors of this paper would like to express their gratitudes to Prof. Banxian Han, Dr. Siqi Jian, Dr. Songzi Li  and Dr. Yu-Zhao Wang for helpful discussions in the preparation of this paper. The first named author would like to thank Dr. C. Brena for his interest on the $W$-entropy formula on Riemannian manifolds and for valuable discussions on related questions on RCD spaces.

\section{Definitions and basic facts of RCD spaces}\label{sect2}
In this section, we briefly recall some  definitions and basic facts of RCD spaces. 

 Let $(X, d, \mu)$ be a metric measure space, which means that $(X, d)$ is 
a complete and separable metric space and $\mu$ is a locally finite measure.
Locally finite means that for all $x \in X$ , there is $r>0$ such that 
$\mu\left(B_r(x)\right)<\infty$ and $\mu$ is a  $\sigma$-finite Borel measure on $X$, where $B_r(x)=\{y\in X, d(x, y)<r\}$.

 Let  $P_2(X, d)$ be the $L^2$-Wasserstein space over $(X, d)$, i.e. the set of all Borel probability measures $\mu$ satisfying
$$
\int_X d\left(x_0, x\right)^2 \mu(\mathrm{d} x)<\infty,
$$
where $x_0 \in X$ is a (and hence any) fixed point in $M$. The $L^2$-Wasserstein distance between $\mu_0, \mu_1 \in P_2(X, d)$ is defined by
$$
W_2\left(\mu_0, \mu_1\right)^2:=\inf_{\pi\in \Pi} \int_{X\times X} d(x, y)^2 \mathrm{~d} \pi(x, y),
$$
where $\Pi$ is the set of coupling measures $\pi$ of $\mu_0$ and $ \mu_1$ on $X\times X$, i.e., $\Pi=\{\pi\in P(X\times X), \pi(\cdot, X)=\mu_0, \pi(X, \cdot)=\mu_1\}$, where $P(X\times X)$ is the set of probability measures on $X\times X$.

Fix a reference measure $\mu$ on $(X, d)$, let  $P_2(X, d, \mu)$ be the subspace of  all absolutely continuous measures  with respect to the measure $\mu$.  For any given measure $\nu \in P_2(X, d)$, we can define the relative entropy with respect to $\mu$ as 
$$
\operatorname{Ent}(\nu):=\int_X \rho \log \rho \mathrm{d} \mu,
$$
if $\nu=\rho \mu$ is absolutely continuous w.r.t.\;$\mu$ and $(\rho \log \rho)_{+}$ is integrable  w.r.t.\;$\mu$,  otherwise we set $\operatorname{Ent}(\nu)=+\infty$.
 The Fisher information is defined by
$$
I(\nu):= \begin{cases}\int_X \frac{|D \rho|^2}{\rho} \mathrm{d}\mu & \text { if } \nu=\rho \mu, \\ \infty & \text { otherwise. }\end{cases}
$$
 
Given $N \in$ $(0, \infty)$, Ebar, Kuwada and Sturm \cite{EKS2015} introduced the functional $U_N: P_2(X, d) \rightarrow[0, \infty]$ 

$$
U_N(\nu):=\exp \left(-\frac{1}{N} \operatorname{Ent}(\nu)\right),
$$
which is similar to the Shannon entropy power \cite{Sh}.

 We now follow Bacher  and Sturm \cite{BS2010} and Ambrosio-Gigli-Savar\'e   \cite{AGS2014Invent} to introduce 
  the definition of CD$^*(K, N)$ and RCD$^*(K, N)$ spaces below. Let $P_{\infty}(X, d, \mu)$ be the set of measures in $P_2(X, d, \mu)$ with bounded support.
 
\begin{definition}\cite{EKS2015}\label{Def}
 	For $\kappa \in \mathbb{R}$, and $\theta \geq 0 $ we define the function
 	\begin{equation*}
\mathfrak{s}_\kappa(\theta)= \begin{cases}\frac{1}{\sqrt{\kappa}} \sin (\sqrt{\kappa} \theta), & \kappa>0 ,\\ \theta, & \kappa=0 ,\\ \frac{1}{\sqrt{-\kappa}} \sinh (\sqrt{-\kappa} \theta), & \kappa<0.\end{cases}
\end{equation*}
\begin{equation*}
\mathfrak{c}_\kappa(\theta)= \begin{cases}\cos (\sqrt{\kappa} \theta), & \kappa \geq 0 ,\\ \cosh (\sqrt{-\kappa} \theta), & \kappa<0.\end{cases}
\end{equation*}
Moreover, for $t\in [0,1]$ we set
\begin{equation*}
\sigma_\kappa^{(t)}(\theta)= \begin{cases}\frac{\mathfrak{s}_\kappa(t \theta)}{\mathfrak{s}_\kappa(\theta)}, & \kappa \theta^2 \neq 0 \text { and } \kappa \theta^2<\pi^2 ,\\ t, & \kappa \theta^2=0 ,\\ +\infty, & \kappa \theta^2 \geq \pi^2.\end{cases}
\end{equation*}
 \end{definition} 
 
\begin{definition}[\cite{BS2010}]
	   We say that metric measure space $(X,d,\mu)$ satisfies the reduced curvature-dimension condition CD$^\ast(K, N)$ if and only if for each pair $\mu_0=\rho_0 \mu, \mu_1=\rho_1 \mu \in P_{\infty}(X, d, \mu)$, there exists an optimal coupling $\pi$ of $\mu_0$ and $\mu_1$ such that
\begin{equation}\label{CD}
\begin{aligned}
\int \rho_t^{-\frac{1}{N^{\prime}}} \mathrm{d} \mu_t \geq & \int_{X \times X}\left[\sigma_{K / N^{\prime}}^{(1-t)}\left(d\left(x_0, x_1\right)\right) \rho_0\left(x_0\right)^{-\frac{1}{N^{\prime}}}\right. \\
& \left.+\sigma_{K / N^{\prime}}^{(t)}\left(d\left(x_0, x_1\right)\right) \rho_1\left(x_1\right)^{-\frac{1}{N^{\prime}}}\right] \mathrm{d} \pi\left(x_0, x_1\right),
\end{aligned}
\end{equation}
 where  $\left(\mu_t\right)_{t \in[0,1]}$ in $P_{\infty}(X, d, \mu)$ is a geodesic connecting $\mu_0$ and $\mu_1$ and $N^{\prime} \geq N$.
If inequality \eqref{CD} holds for any geodesic $\left(\mu_t\right)_{t \in[0,1]}$ in $P_{\infty}(X, d, \mu)$,  we say that $(X, d, \mu)$ is a strong CD$^*(K, N)$ space.
\end{definition}

  To introduce the RCD spaces and  consider the canonical heat flow on $(X, d, \mu)$, we need several  notions  including the Cheeger energy functional.

\begin{definition}[minimal relaxed gradient\cite{AGS2014Invent}]
		We say that $G \in L^2(X, \mu)$ is a relaxed gradient of $f \in L^2(X, \mu)$ if there exist Borel d-Lipschitz functions $f_n \in L^2(X, \mu)$ such that: 
		
		(a) $f_n \rightarrow f$ in $L^2(X, \mu)$ and $|Df_n|$ weakly converge to  $\tilde{G}$ in $L^2(X, \mu)$; 
		
		(b) $\tilde{G} \leq G $. m-a.e. in X.  We say that G is the minimal relaxed gradient of f if its $L^2(X, \mu)$ norm is minimal among relaxed gradients. 
		
 We use $|Df |_{\ast} $ to denote  the minimal relaxed gradient.
\end{definition}
%
 Ambrosio et al \cite{AGS2014Invent} proved that $|Df |_{\ast}=|\nabla f |_w , \mu-a.e$ where  $|\nabla f |_w $ denotes the so called minimal weak upper
gradient of $f$ (cf \cite{AGS2014Invent})

The Cheeger energy functional \cite{EKS2015} is  defined by  
\begin{equation*}
	\operatorname{Ch}(f):=\int_X |\nabla f |^2_w d\mu,
\end{equation*}
  and inner product is given by $$
\langle\nabla f, \nabla g\rangle:=\lim _{\varepsilon \searrow 0} \frac{1}{2 \varepsilon}\left(|\nabla(f+\varepsilon g)|_w^2-|\nabla f|_w^2\right).
$$
  We now have a strongly local Dirichlet form $(\mathcal{E}, D(\mathcal{E}))$ on $L^2(X, \mu)$ by setting $\mathcal{E}(f, f)=\operatorname{Ch}(f)$ and $D(\mathcal{E})=W^{1,2}(X, d, \mu)$ being a Hilbert space 
  and $L^2$-Lipschitz 
functions are dense in the usual sense.  In this case, $\mathrm{H}_t$ is a semigroup of the self-adjoined linear operator on $L^2(X, \mu)$ with the Laplacian $\Delta$ as its generator. The previous result implies that for $f, g \in W^{1,2}(X, d, \mu)$,  the Dirichlet form is defined by
$$
\mathcal{E}(f, g):=\int_X\langle\nabla f, \nabla g\rangle \mathrm{d} \mu.
$$
Moreover, for $f \in W^{1,2}$ and $g \in D(\Delta)$, the integration by parts formula holds
$$
\int_X\langle\nabla f, \nabla g\rangle \mathrm{d} \mu=-\int_X f \Delta g \mathrm{~d} \mu.
$$

Ambrosio et al \cite{AGS2014} proved that the Cheeger energy ${\rm Ch}$ is quadratic
 is equivalent to the linearity of the heat semigroup $\mathrm{H}_t$ defined by solving the 
 heat equation below:
\begin{equation*}
	\frac{\partial}{\partial t}u=\Delta u, \ \ \  u(0)=f.
\end{equation*}

%

\begin{definition}[\cite{EKS2015, KL}]
	We say that a metric measure space $(X, d, \mu)$ is infinitesimally Hilbertian if the associated
	 Cheeger energy is quadratic. Moreover,
we call $(X, d, \mu)$ an RCD$^*(K, N)$ space if it satisfies the Riemannian Curvature-Dimension condition CD$^*(K, N)$ and satisfies infinitesimally Hilbertian condition. 	
\end{definition}

By \cite{AGS2014,EKS2015,KL},  for infinitesimally Hilbertian $(X, d, \mu)$, CD$^*(K, N)$  condition is equivalent to the following  conditions:

(i) There exists $C>0$ and $x_0 \in X$ such that
\begin{equation*}
\int_X \mathrm{e}^{-C d\left(x_0, x\right)^2} \mu(\mathrm{~d} x)<\infty.
\end{equation*}

(ii) For $f \in \mathcal{D}(\mathrm{Ch})$ with $|\nabla f|_w \leq 1 \quad \mu$-a.e.

(iii) For all $f \in \mathcal{D}(\Delta)$ with $\Delta f \in \mathcal{D}(\Delta)$ and $g \in \mathcal{D}(\Delta) \cap L^{\infty}(\mu)$ with $g \geq 0$ and $\Delta g \in L^{\infty}(\mu)$
$$
\begin{aligned}
& \frac{1}{2} \int_X|\nabla f|_w^2 \Delta g \mathrm{~d} \mu-\int_X\langle \nabla f, \nabla \Delta f\rangle g \mathrm{d}\mu 
 \geq K \int_X|\nabla f|_w^2 g \mathrm{d}\mu+\frac{1}{N} \int_X(\Delta f)^2 g \mathrm{d}\mu.
\end{aligned}
$$

We now give the following important examples of RCD$^\ast(K,N)$ space:
\medskip

\begin{itemize}

\item Let $\left(M^n, g\right)$ be a complete Riemannian manifold, $f: M \rightarrow \mathbb{R}$ a $C^2(M)$ function, $d_g$ the Riemannian distance function, and vol $g_g$ the Riemannian volume measure on $M$. Set $\mathfrak{m}:=e^{-f}$ vol $_g$. Then the metric measure space ( $M, d_g, \mathfrak{m}$ ) satisfies $\operatorname{RCD}(K, N)$ condition for $N>n$ if and only if 
$$
\operatorname{Ric}_N:=\operatorname{Ric}_g+\operatorname{Hess}_f-\frac{d f \otimes d f}{N-n} \geq K g
$$
holds. For $N=n$, the $\operatorname{RCD}(K, n)$ condition is equivalent to $d f=0$ and $\operatorname{Ric}_g \geq K$.

\item Let $\left\{\left(X_i, d_i, \mathfrak{m}_i\right)\right\}_i$ be a family of $\operatorname{RCD}^*\left(K_i, N\right)$ spaces. For $x_i \in X_i$, assume $\mathfrak{m}_i\left(B_1\left(x_i\right)\right)=1, K_i \rightarrow K$ and $\left(X_i, d_i, \mathfrak{m}_i, x_i\right) \xrightarrow{p m G}\left(X_{\infty}, d_{\infty}, \mathfrak{m}_{\infty}, x_{\infty}\right)$ as $i \rightarrow \infty$, where $\xrightarrow{p m G}$ means the pointed measured Gromov convergence (see \cite{Gigili2015} ). Then $\left(X_{\infty}, d_{\infty}, \mathfrak{m}_{\infty}\right)$ satisfies the $\operatorname{RCD}^*(K, N)$ condition. Moreover a family of $\mathrm{RCD}^*(K, N)$ spaces with the normalized measures is precompact with respect to the pmG-convergence.

\end{itemize}

  We now explain some basic results on RCD spaces. 
  For $f, g \in \mathcal{D}(\Delta)\cap \mathcal{L}^{\infty}(\mu)$ and $\varphi \in C^1(\mathbb{R})$ with $\varphi(0)=0$, we have $\varphi(f) \in \mathcal{D}(\Delta) \cap L^{\infty}(\mu)$ and  the following chain rule \eqref{nabla} (see \cite{FOT} ) and the Leibniz rule \eqref{Delta} for the Laplacian (see \cite{Gigili2015MAMS} ) hold :
\begin{equation}\label{nabla}
\begin{aligned}
	\langle \nabla \varphi(f), \nabla g\rangle &=\varphi^{\prime}(f)\langle \nabla f, \nabla g\rangle \quad \mu \text {-a.e. }\\
	\Delta(\phi(g) )& =\phi'(g) \Delta g +\phi''(g) | \nabla g |_w\quad \mu \text {-a.e. }
\end{aligned}
\end{equation}
\begin{equation}\label{Delta}
	\Delta (f\cdot g)
= f \Delta g +g \Delta f +2 <\nabla f, \nabla g> 
\end{equation}

By  Cavaletti-Milman \cite{CavMim} and Z. Li \cite{LZH}, the notion of RCD$^*(K, N)$ space is indeed equivalent to the one of RCD$(K, N)$ space.  So we will only say RCD$(K, N)$ space throughout this paper. 
 
 Ambrosio et al. \cite{AGS2014} proved that for $\mu \in \mathcal{P}_2(X), t \mapsto \operatorname{Ent}\left(P_t \mu\right)$ is absolutely continuous on $(0, \infty)$ and $\mu_t=P_t \mu$ satisfies the energy dissipation identity, i.e. $\mu_t \rightarrow \mu_0$ as $t \rightarrow 0$ and for $0<s<t$,
\begin{equation}\label{EnDI}
\operatorname{Ent}\left(\mu_s\right)=\operatorname{Ent}\left(\mu_t\right)+\frac{1}{2} \int_s^t\left|\dot{\mu}_r\right|^2 \mathrm{~d} r+\frac{1}{2} \int_s^t I\left(\mu_r\right) \mathrm{d} r  \text { a.e. } t.
\end{equation}
 The energy dissipation identity \eqref{EnDI} is equivalent to the following equality
\begin{equation*}
-\frac{\mathrm{d}}{\mathrm{~d} t} \operatorname{Ent}\left(\mu_t\right)=\left|\dot{\mu}_t\right|^2=I\left(\mu_t\right)<\infty \quad \text { a.e.}\quad t. 
\end{equation*}

To end this section, let us briefly summarize the history of the curvature-dimension condition on metric measure spaces. 
As pointed out in \cite{EKS2015},   Bakry and Emery \cite{BE1985} used the inequality 
\eqref{BE}  to introduce the definition of the  
energetic curvature-dimension, denoted as BE$(K,N)$, to 
 study Dirichlet forms and 
the corresponding Markov semigroup via the {\it carr\'e du champ operator}. The 
inequality \eqref{BE} leads to a variety of elegant results concerning entropy in the context of optimal transport. 
However, defining these inequalities directly in general metric measure spaces poses significant difficulties. 
To this end, many mathematicians have focused on exploring the concept of 
curvature in non-smooth spaces. 
The curvature-dimension condition CD$(K,N)$ on metric measure spaces was 
first introduced by K.-T. Sturm \cite{Sturm2006}, where $K$ represents the lower bound of Ricci curvature and $N$ the upper bound of the dimension. On the other hand,  the CD$(K,\infty)$ condition on metric measure 
spaces was independently introduced by Sturm \cite{Sturm2006} and Lott-Villani \cite{LV2009}. However, because the CD$(K,N)$ condition lacks the local-to-global property, Bacher and Sturm \cite{BS2010} proposed  the reduced curvature-dimension condition, denoted by CD$^\ast(K,N)$,  to overcome this shortcoming.  
Ambrosio et al. \cite{AGS2014Invent,AGS2014, AGS2015} further introduced the concept of infinitesimally Hilbertian, which assumes that the 
heat flow is linear. On metric measure space with the  infinitesimally Hilbertian assumption, Ambrosio et al \cite{AGS2014} proved the equivalence of 
CD$(K,\infty)$ and BE$(K,\infty)$ for $K\in \mathbb{R}$, Ebar, Kuwada and Sturm \cite{EKS2015} 
further established the equivalence of CD$^*(K,N)$ and BE$(K,N)$ for $K\in \mathbb{R}$ and $N\in [1, \infty)$ along with the corresponding weak Bochner inequality. This leads the definition of
RCD$^*(K, N)$ space as a metric measure space that satisfies both the infinitesimally Hilbertian condition and the CD$^*(K, N)$ condition.


\section{ Statement of main results}

We now state the main results of this paper. Our first result is the following entropy dissipation equalities and inequality on RCD$(K, N)$ spaces.

\begin{theorem}\label{1HH2}
	Let $(X, d, \mu)$ be an 
 RCD space and $u$ be a solution to the heat equation $\partial_t u=\Delta u$. Let $H(u)=-\int_X u \log ud\mu$ be the Shannon entropy. Let $u^\delta$ be the regularization of $u$ which will be precisely defined in Section 4 below. 
 We have
 
 (1) Let
 $$H_\varepsilon(u)=-\int e_\varepsilon(u)d\mu,$$ 
 where $e_\varepsilon :[0,\infty)\rightarrow \mathbb{R}$ with $e'_\varepsilon(r)=\log(\varepsilon+r)+1$ and
		  $e_\varepsilon(0)=0$.
 Suppose that there exists a $\delta_0>0$ such that
  \begin{equation}\label{C0}
 \int_X \sup_{\delta \in (0, \delta_0)} u_t^\delta |\log u_t^\delta| d\mu \leq \infty.
 \end{equation}
 Then $H_{\varepsilon, \delta}(u_t))$ is continuous in $\varepsilon$ and $\delta$, and 
 \begin{equation}\label{H0}
 	\begin{aligned}
 		\lim_{\delta\rightarrow 0}\lim_{\varepsilon\rightarrow 0}H_\varepsilon(u_t^\delta)
 	=H(u_t).
 	\end{aligned}
 \end{equation} 
 
 (2) Suppose that there exists a $\delta_0>0$ such that
	\begin{equation}\label{C1}
		\begin{aligned}
			\int_X \sup_{\delta\in (0, \delta_0)} \frac{|\nabla u_t^\delta |^2}{u_t^\delta} d\mu< \infty.
		\end{aligned}
	\end{equation}
Then $ {d\over dt}H_{\varepsilon, \delta}(u_t))$ is continuous in $\varepsilon$ and $\delta$, and 
the  first order entropy dissipation formula holds
	\begin{equation}
	\label{H1}
	\frac{d}{dt}H(u)=\lim_{\delta\rightarrow 0}\lim_{\varepsilon\rightarrow 0}{d\over dt}H_\varepsilon(u_t^\delta)=-\int_X \log u
\Delta u d\mu=-\int_X \Delta \log u ud\mu,
\end{equation}

(3)	Suppose that there exists a $\delta_0>0$ such that
\begin{equation}\label{C4}
	\begin{aligned}
		\int_X \sup_{\delta\in (0, \delta_0)}\Big[\frac{|\nabla u^\delta |^2}{u^\delta}+ \frac{|\Delta u^\delta|^2}{u^\delta}+\frac{|\nabla \Delta u^\delta |^2 }{u^\delta}\Big]d\mu<\infty.
	\end{aligned}
\end{equation}
Then the second  order entropy dissipation formula holds 
	\begin{equation}
	\frac{d^2}{dt^2}H(u)=\lim_{\delta\rightarrow 0}\lim_{\varepsilon\rightarrow 0}{d^2\over dt^2}H_\varepsilon(u_t^\delta)=-2\int_X {\bf \Gamma_2}( \log u, \log u)ud\mu,\label{HH2}
\end{equation}
and the following second order entropy dissipation inequality holds
		\begin{eqnarray}
		\frac{d^2}{dt^2}H(u)
		\leq -\frac{2}{N} \int_X u\Big(\Delta \log u\Big)^2d\mu-2K\int_X u |\nabla \log u|_w^2 d\mu.\label{H2}
	\end{eqnarray}

\end{theorem}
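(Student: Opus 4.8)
The plan is to establish all three identities by working with the doubly regularized functional $H_\varepsilon(u_t^\delta)$, carrying out every $t$-differentiation and integration by parts at the regularized level where the objects are smooth and integrable, and only afterwards passing to the iterated limit $\varepsilon\to0$ followed by $\delta\to0$. The role of the two regularizations is complementary: the spatial regularization $u^\delta$ (running the heat flow for an extra short time $\delta$) places $u^\delta$, $\Delta u^\delta$ and $\nabla\Delta u^\delta$ in $L^2$ and in $\mathcal{D}(\Delta)$, so that the chain rule \eqref{nabla}, the Leibniz rule \eqref{Delta} and the integration by parts formula $\int_X\langle\nabla f,\nabla g\rangle\,d\mu=-\int_X f\Delta g\,d\mu$ all apply, while $e_\varepsilon'(r)=\log(\varepsilon+r)+1$ removes the singularity of $\log r$ at $r=0$ and keeps $\log(\varepsilon+u^\delta)$ in the admissible function class. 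Since the heat semigroup commutes with $\Delta$, one has $\partial_t u^\delta=\Delta u^\delta$, which justifies differentiating under the integral sign once the majorants in \eqref{C0}, \eqref{C1}, \eqref{C4} are available.

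For part (1) I would fix $\delta$ and let $\varepsilon\to0$ first: since $e_0(r)=r\log r$ one has $e_\varepsilon(u^\delta)\to u^\delta\log u^\delta$ pointwise, and \eqref{C0} furnishes the $\delta$-uniform integrable majorant $\sup_{\delta}u^\delta|\log u^\delta|$, so dominated convergence gives $H_\varepsilon(u^\delta)\to-\int_X u^\delta\log u^\delta\,d\mu$. Then, letting $\delta\to0$ and using that $u^\delta\to u$ in $L^1$ together with the same majorant, a second application of dominated convergence yields \eqref{H0}; the joint continuity of $H_{\varepsilon,\delta}$ comes from the same uniform integrability.

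For part (2) I differentiate at the regularized level, using $\partial_t u^\delta=\Delta u^\delta$, that the constant term drops out by conservation of mass $\int_X\Delta u^\delta\,d\mu=0$, and then integrating by parts via \eqref{nabla}:
\begin{equation*}
\frac{d}{dt}H_\varepsilon(u_t^\delta)=-\int_X\big(\log(\varepsilon+u^\delta)+1\big)\,\Delta u^\delta\,d\mu=\int_X\frac{|\nabla u^\delta|_w^2}{\varepsilon+u^\delta}\,d\mu.
\end{equation*}
Letting $\varepsilon\to0$ and then $\delta\to0$, the domination \eqref{C1} forces convergence to the Fisher information $\int_X|\nabla u|_w^2/u\,d\mu$, which another integration by parts rewrites as $-\int_X\log u\,\Delta u\,d\mu=-\int_X u\,\Delta\log u\,d\mu$. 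Because this convergence is uniform on compact $t$-intervals, the limit may be exchanged with $d/dt$, so the expression equals $\frac{d}{dt}H(u)$ and \eqref{H1} follows.

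For part (3) I differentiate once more. Pushing $d/dt$ through the Fisher-type integral and integrating by parts with \eqref{Delta} produces an integrand involving $\Delta u^\delta$ and $\nabla\Delta u^\delta$ divided by powers of $u^\delta$ --- precisely the quantities that \eqref{C4} is designed to dominate --- which converges as $\varepsilon\to0,\ \delta\to0$ to $-2\,\Gamma_2(\log u,\log u)\,u$, giving the identity \eqref{HH2}. Finally I apply the weak Bakry--\'Emery (Bochner) inequality, condition (iii) of the equivalences above, to the regularized pair $f=\log(\varepsilon+u^\delta)$ and test function $g=u^\delta\ge0$, which upon passing to the limit reads $\int_X\Gamma_2(\log u,\log u)\,u\,d\mu\ge K\int_X|\nabla\log u|_w^2\,u\,d\mu+\frac1N\int_X(\Delta\log u)^2\,u\,d\mu$, so that multiplying by $-2$ turns the identity \eqref{HH2} into the inequality \eqref{H2}. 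I expect the main obstacle to be exactly this last stage: on an RCD space $\Gamma_2$ is only a measure and the Bochner inequality holds solely in integrated weak form, so the whole second-order computation must be performed on the regularized functions---where $\log(\varepsilon+u^\delta)$, its Laplacian and its gradient Laplacian are honest $L^2$ objects---and the interchange of $d^2/dt^2$ with $\lim_{\delta\to0}\lim_{\varepsilon\to0}$ must be justified by the $\delta$-uniform majorants in \eqref{C4} before the inequality can be transferred to $u$ itself.
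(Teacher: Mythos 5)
Your overall strategy is the same as the paper's: double regularization ($u^\delta$ via the semigroup with a mollified cut-off, $e_\varepsilon$ for the entropy), differentiation under the integral at the regularized level, integration by parts via the chain and Leibniz rules, dominated convergence with the stated majorants, and the weak Bochner inequality applied to regularized functions to pass from \eqref{HH2} to \eqref{H2}. Your parts (1) and (2) match Steps 1 and 2 of the paper's proof essentially verbatim.

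In part (3), however, there is a genuine gap, at precisely the point you flag and then dismiss. Differentiating the regularized Fisher information and integrating by parts gives, as in the paper,
\begin{equation*}
\frac{d^2}{dt^2}H_\varepsilon(u_t^\delta)
=-\int_X\Big[2\,\Delta(u_t^\delta+\varepsilon)\,\Delta g_t^{\varepsilon,\delta}
+|\nabla g_t^{\varepsilon,\delta}|_w^2\,\Delta(u_t^\delta+\varepsilon)\Big]d\mu,
\qquad
\Delta g_t^{\varepsilon,\delta}=\frac{\Delta u_t^\delta}{u_t^\delta+\varepsilon}
-\frac{|\nabla u_t^\delta|_w^2}{(u_t^\delta+\varepsilon)^2},
\end{equation*}
so the integrand unavoidably contains the mixed term $|\nabla u_t^\delta|_w^2\,|\Delta u_t^\delta|/(u_t^\delta+\varepsilon)^2$. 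Contrary to your claim, this is \emph{not} one of ``precisely the quantities that \eqref{C4} is designed to dominate'': writing it as $\frac{|\nabla u^\delta|_w^2}{u^\delta}\cdot\frac{|\Delta u^\delta|}{u^\delta}$, the extra factor $1/u^\delta$ cannot be absorbed by any Cauchy--Schwarz or AM--GM combination of the three terms $|\nabla u^\delta|_w^2/u^\delta$, $|\Delta u^\delta|^2/u^\delta$, $|\nabla\Delta u^\delta|_w^2/u^\delta$, so \eqref{C4} alone furnishes no integrable majorant uniform in $\varepsilon,\delta$. The paper closes exactly this gap with the Li--Yau Harnack gradient estimate of Zhang--Zhu (Theorem \ref{zhulemma}): since $u_t^\delta=P_tu_\delta$ is itself a positive solution of the heat equation, for every $\alpha>1$ one has the pointwise bound $|\nabla u_t^\delta|_w^2/(u_t^\delta)^2\leq \alpha\,\Delta u_t^\delta/u_t^\delta+C_{K,N,T,\alpha}$, whence
\begin{equation*}
\frac{|\nabla u_t^\delta|_w^2\,|\Delta u_t^\delta|}{(u_t^\delta)^2}
\leq (\alpha+C_{K,N,T,\alpha})\,\frac{|\Delta u_t^\delta|^2}{u_t^\delta}+C_{K,N,T,\alpha}\,u_t^\delta,
\end{equation*}
and the right-hand side is integrable by \eqref{C4} and mass conservation; only then does dominated convergence justify interchanging $d^2/dt^2$ with the iterated limit. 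Your proposal, as written, omits this ingredient (or any substitute for it), and without it the key limiting step of part (3) does not go through; with it, your argument coincides with the paper's.
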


\begin{remark} In the case $X=M$ is a compact Riemannian manifold, Bakry and \'Emery \cite{BE1985} proved that for any positive solution to the heat equation $\partial_t u=Lu$, the  entropy dissipation formulas hold
	\begin{equation}\label{1H1}
		\begin{split}
			\frac{d}{dt}H(u)
			=\int_M |\nabla\log u |^2u d\mu,
\end{split}		
	\end{equation}
	and
	\begin{equation}\label{2H2}
		\begin{split}
			\frac{d^2}{dt^2}H(u)
			=-2\int_M [\|\nabla^2 \log u \|_{\rm HS}^2+\Ric(L)(\nabla \log u , \nabla \log u )] 
			ud\mu.
\end{split}		
	\end{equation}

	In the case $X=M$ is a complete Riemannian manifold, the first author \cite{Li2012} proved the entopy dissipation formulas in Theorem 4.1 on complete Riemannian manifolds with bounded geometry condition. More precisely, the result is stated as follows: Let $(M, g)$ be a complete Riemannian manifold with bounded geometry condition, and $\phi \in C^2(M)$. Let $u$ be a positive solution to the heat equation $\partial_t u=L u$ satisfying the integrability condition

\begin{equation}\label{L1}
	\int_M\left[\frac{|\nabla u|^2}{u}+\frac{|L u|^2}{u}+\frac{|\nabla L u|^2}{u}\right] d \mu(x)<+\infty
\end{equation}
%
%
%
%
Then entropy dissipation formulas $(\ref{1H1})$ and $(\ref{2H2})$ hold. 
%
%
%
In \cite{Li2012, Li2016SPA}, Li proved that the fundamental solution to the heat equation $\partial_t u=L u$  satisfies 
the condition $(\ref{L1})$ on any complete Riemannian manifold 
	with the $CD(K, m)$-condition, i.e., $Ric_{m, n}(L)\geq K$ for $m\geq n$ and 
	$K\in \mathbb{R}$.  Thus, $(\ref{1H1})$ and $(\ref{2H2})$ hold for the fundamental solution to the heat equation $\partial_t u=L u$ satisfies the condition \eqref{L1} on any complete Riemannian manifold on which the Riemannian curvature tensor as well as its $k$-th order covariant derivatives and $\phi \in C^4(M)$ with $\nabla \phi \in C_b^3(M)$. Recently, it has been further noticed by S. Li and Li \cite{LiLi2024TMJ} that for any positive solution to the heat equation $\partial_t u=L u$, the condition $(\ref{L1})$ holds on any complete Riemannian manifold with bounded geometry condition, and $\phi \in C^4(M)$ with $\nabla \phi \in C_b^3(M)$.  Hence the entropy dissipation formulas $(\ref{1H1})$ and $(\ref{2H2})$ hold for any positive solution to the heat equation $\partial_t u=L u$ on complete Riemannian manifolds with the above bounded geometry condition. 
\end{remark}
%
%

\begin{remark} By \cite{JLZ},  the fundamental solution to the heat equation $\partial_t u=\Delta u$ satisfies the following two sides estimates on RCD$(-K, N)$ space: Let  $(X, d, \mu)$ be an RCD$
(-K,N)$ space with $K\geq 0$ and $N\in [1, \infty)$. Given
any $\varepsilon>0$, there exist positive constants $C_1(\varepsilon)$ and $C_2(\varepsilon)$, 
depending also on $K$ and $N$, such that for all $x, y\in X$ and $t>0$, it holds
 \begin{equation*}\label{JLZB}
 {1\over C_1(\varepsilon) V_x(\sqrt{t}) }\exp\left({-\frac{d^2(x, y)} {(4-\epsilon)t}-C_2(\varepsilon) t } \right)\leq p_t(x, y)\leq 
 { C_1(\varepsilon)\over V_x(\sqrt{t}) }\exp\left({-\frac{d^2(x, y)} {(4+\epsilon)t}+C_2(\varepsilon) t } \right),
 \end{equation*}
 where $V_x(\sqrt{t})=\mu(B(x, \sqrt{t})$ is the volume of the ball $B(x, \sqrt{t})=\{y\in X: d(x, y)\leq \sqrt{t}\}$. 
 This yields
 \begin{equation*}
 \log p_t(x, y)\geq 
 -\log C_1(\varepsilon) -C_2(\varepsilon) t -\frac{d^2(x, y)}{(4-\varepsilon)t}-\log V_x(\sqrt{t}),
 \end{equation*}
and

 \begin{equation*}
 \log p_t(x, y)\leq 
 \log C_1(\varepsilon) +C_2(\varepsilon) t -\frac{d^2(x, y)}{(4+\varepsilon)t}-\log V_x(\sqrt{t}),
 \end{equation*}
 Therefore
 \begin{equation*}
| \log p_t(x, y)|\leq  \log C_1(\varepsilon)+C_2(\varepsilon) t +\frac{d^2(x, y)}{(4-\varepsilon)t}+\log V_x(\sqrt{t}).
 \end{equation*}
For any fixed $x\in X$ and $t>0$, we have
 \begin{equation*}
 \int_X d^2(x, y)p_t(x, y)d\mu(y)<\infty.
 \end{equation*}   
Hence, for any fixed $x\in M$ and $t>0$, the fundamental solution $p_t(x, \cdot)$ on RCD$(K, N)$ space satisfies the  condition \eqref{H0} required in Theorem \ref{1HH2}. On the other hand, the Li-Yau Harnack inequality on RCD$(K, N)$ (see \cite{zhuxiping} and  Lemma \ref{zhulemma} below) says that, for every $\alpha>1$, it holds
\begin{equation*}
 \int_X {|\nabla u|^2\over u}d\mu\leq \int_X\left[\alpha{\partial_t u\over u} +C_{N, K, \alpha, t} \right]u\mu<\infty.
 \end{equation*}
 where $C_{K, N, \alpha, t}=\left(1+{2Kt\over a(\alpha-1)}\right){N\alpha^2\over 2t}$. 
 Hence the condition $(\ref{C1})$ in Theorem \ref{1HH2} is also verified on RCD$(K, N)$ space. It remains only to verify the condition 
 \begin{equation*}\label{C4B}
	\begin{aligned}
		\int_X \Big[\frac{|\Delta u|^2}{u}+\frac{|\nabla \Delta u |^2 }{u}\Big]d\mu<\infty
	\end{aligned}
\end{equation*}
to ensure the validity of the second order entropy dissipation formula in Theorem \ref{1HH2}. This has been proved for the fundamental solution of the heat equation $\partial_t u=Lu$ in \cite{Li2016SPA} on complete Riemannian manifolds with bounded geometry condition and CD$(-K, N)$ condition, and extended to all  positive solution of the heat equation $\partial_t u=Lu$ in \cite{Li2016SPA}.  
 
\end{remark}

The next theorem extends the $W$-entropy formula from Riemannian manifolds to RCD$(K, N)$ spaces. 
\begin{theorem} \label{WNK0} 
Let $(X, d, \mu)$ be a metric measure space satisfying the RCD$(K, N)$-condition. Let $u$ be a positive solution to the heat equation $\partial_t u=\Delta u$ which satisfies the condition as required in Theorem \ref{1HH2}. Then
\begin{equation}
\begin{aligned}
\frac{d}{d t} W_{N, K}(u)
= -2t\int_X \Big[{\bf \Gamma_2}(\log u, \log u )+\left({1\over t}-K\right) \Delta \log u+{N\over 4}\Big({1\over t}-K \Big)^2-K|\nabla \log u |_w^2\Big]ud\mu. \label{W-Gamma2}
\end{aligned}
\end{equation}
Moreover, it holds
\begin{equation*}
	\frac{d}{dt}W_{N,K}(u)\leq -\frac{2t}{N}\int_X \Big[\Delta \log u+\left({N\over 2t}-{NK\over 2}\right)\Big]^2u d\mu.
\end{equation*}
In particular,  ${d\over dt}W_{N,K}(u)\leq0$, and if ${d\over dt}W_{N,K} (u)=0$ holds at some $t>0$ then 
\begin{equation*}
	\Delta \log u+{N\over 2t}-{NK\over 2}=0.
	\end{equation*}
\end{theorem}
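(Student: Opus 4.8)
The plan is to deduce the statement entirely from Theorem~\ref{1HH2}, so that only elementary calculus and a single application of the Bochner inequality remain. Writing $H_{N,K}(u)=H(u)+\varphi(t)$, where $\varphi(t):=H_{N,K}(u)-H(u)=-\frac{N}{2}\log(4\pi et)-\frac{N}{2}Kt\big(1+\frac16 Kt\big)$ is an explicit, $u$-independent function of $t$, I would first differentiate the definition $W_{N,K}(u)=\frac{d}{dt}\big(tH_{N,K}(u)\big)=H_{N,K}(u)+t\frac{d}{dt}H_{N,K}(u)$ once more to obtain
\begin{equation*}
\frac{d}{dt}W_{N,K}(u)=2\frac{d}{dt}H_{N,K}(u)+t\frac{d^2}{dt^2}H_{N,K}(u)
=2\Big(\tfrac{d}{dt}H(u)+\varphi'(t)\Big)+t\Big(\tfrac{d^2}{dt^2}H(u)+\varphi''(t)\Big).
\end{equation*}
The derivatives $\varphi'(t)$ and $\varphi''(t)$ are computed directly, while the only nontrivial inputs $\frac{d}{dt}H(u)$ and $\frac{d^2}{dt^2}H(u)$ are supplied by \eqref{H1} and \eqref{HH2} under the integrability hypotheses carried over from Theorem~\ref{1HH2}.

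Next I would substitute \eqref{H1}, in the form $\frac{d}{dt}H(u)=-\int_X u\,\Delta\log u\,d\mu$, and \eqref{HH2}, namely $\frac{d^2}{dt^2}H(u)=-2\int_X {\bf \Gamma_2}(\log u,\log u)\,u\,d\mu$, into the identity above. Using the normalization $\int_X u\,d\mu=1$ (so that the explicit $K$- and $t$-dependent constants can be written under the integral sign against $u$) together with the chain rule \eqref{nabla} and the integration-by-parts identity $\int_X u\,\Delta\log u\,d\mu=-\int_X u\,|\nabla\log u|_w^2\,d\mu$ (which lets the $K$-linear contribution be carried on either $\Delta\log u$ or $|\nabla\log u|_w^2$), I would recombine the terms into the compact form \eqref{W-Gamma2}. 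The point of this rearrangement is to expose a square-completing structure: the constant $\frac{N}{4}(\frac1t-K)^2$ and the cross term $(\frac1t-K)\Delta\log u$ are precisely those needed below.

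For the inequality I would invoke the Bochner (Bakry--\'Emery) inequality in the integrated form already recorded as \eqref{H2} (equivalently \eqref{BE}), namely ${\bf \Gamma_2}(\log u,\log u)\ge \frac1N(\Delta\log u)^2+K|\nabla\log u|_w^2$ in the $u\,d\mu$-integrated sense. The term $-K|\nabla\log u|_w^2$ in \eqref{W-Gamma2} is designed exactly to absorb the $K|\nabla\log u|_w^2$ produced by Bochner, after which the integrand is bounded below by
\begin{equation*}
\frac1N(\Delta\log u)^2+\Big(\frac1t-K\Big)\Delta\log u+\frac{N}{4}\Big(\frac1t-K\Big)^2
=\frac1N\Big[\Delta\log u+\frac{N}{2}\Big(\frac1t-K\Big)\Big]^2 .
\end{equation*}
Multiplying by the negative factor $-2t$ reverses the inequality and gives $\frac{d}{dt}W_{N,K}(u)\le-\frac{2t}{N}\int_X\big[\Delta\log u+(\frac{N}{2t}-\frac{NK}{2})\big]^2u\,d\mu\le 0$. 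Rigidity is then immediate: if equality holds at some $t>0$, the nonnegative perfect-square integrand must vanish $u\,d\mu$-almost everywhere, yielding $\Delta\log u+\frac{N}{2t}-\frac{NK}{2}=0$.

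The part requiring the most care is not the algebra but the justification of these manipulations in the nonsmooth RCD setting, where ${\bf \Gamma_2}(\log u,\log u)$ is a priori only measure-valued and no pointwise Bochner inequality is available. I would handle this exactly as in Theorem~\ref{1HH2}: work with the regularizations $u^\delta$, apply the integrated Bakry--\'Emery inequality \eqref{BE}, and pass to the limit $\varepsilon\to0$ and then $\delta\to0$ under the hypotheses \eqref{C1} and \eqref{C4}. A secondary point to watch is the exact bookkeeping of the $K$-dependent constants coming from $\varphi'$ and $\varphi''$, which must reassemble cleanly into the $\frac{N}{4}(\frac1t-K)^2$ and cross terms of \eqref{W-Gamma2}; this is where any sign or coefficient slip would surface.
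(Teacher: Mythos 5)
Your strategy is the same as the paper's own proof: differentiate $tH_{N,K}(u)$, insert the dissipation formulas \eqref{H1} and \eqref{HH2} of Theorem~\ref{1HH2}, recombine into \eqref{W-Gamma2}, apply the weak Bochner inequality against $u\,d\mu$, complete the square, and read off rigidity from the vanishing of the square. The only organizational difference is that the paper splits off the $K$-dependence at the level of $W_N$ (writing $\tfrac{d}{dt}W_{N,K}(u)=\tfrac{d}{dt}W_N(u)+NK\left(1-\tfrac{Kt}{2}\right)$ and $\tfrac{d}{dt}W_N(u)=tH''+2H'-\tfrac{N}{2t}$) rather than at the level of $H_N$ as you do; these are the same computation.

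However, the bookkeeping you flagged as ``secondary'' is exactly where your write-up breaks. With $\varphi$ as you defined it (copying the Introduction's definition $H_{N,K}=H_N-\tfrac{N}{2}Kt\left(1+\tfrac16 Kt\right)$), a direct computation gives
\begin{equation*}
2\varphi'(t)+t\varphi''(t)=-\frac{N}{2t}-NK-\frac{NK^2t}{2},
\end{equation*}
whereas the constant term in \eqref{W-Gamma2} contributes $-2t\cdot\tfrac{N}{4}\left(\tfrac1t-K\right)^2=-\tfrac{N}{2t}+NK-\tfrac{NK^2t}{2}$. The two differ by $2NK$, so with your $\varphi$ the constants reassemble into $\tfrac{N}{4}\left(\tfrac1t+K\right)^2$, not $\tfrac{N}{4}\left(\tfrac1t-K\right)^2$; after Bochner the integrand is then a perfect square plus $\tfrac{NK}{t}$, and the final bound acquires an extra $-2NK$, which destroys the square-completion identity and, for $K<0$, the conclusion $\tfrac{d}{dt}W_{N,K}\le 0$. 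This is an inconsistency you inherited from the paper itself: its Introduction defines $H_{N,K}$ with the minus sign, but its proofs in Sections 5 and 6 actually use $H_{N,K}=H_N+\tfrac{N}{2}Kt\left(1-\tfrac16 Kt\right)$, equivalently $\tfrac{d}{dt}W_{N,K}=\tfrac{d}{dt}W_N+NK\left(1-\tfrac{Kt}{2}\right)$, and only with this convention is the identity \eqref{W-Gamma2} (and hence the theorem as stated) correct. Adopt that sign convention, verify $2\varphi'+t\varphi''=-\tfrac{N}{2t}+NK-\tfrac{NK^2t}{2}$, and the rest of your argument goes through verbatim.
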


When $K=0$, Theorem \ref{WNK0} is a natural extension and improvement of the  following result due to 
Kuwada and Li \cite{KL}. The proof of Theorem \ref{WNK0} is different from the proof of Theorem \ref{KL0} given in Kuwada-Li  \cite{KL}. 

\begin{theorem} [\cite{KL}]\label{KL0}
Let $(X, d, \mu)$ be a metric measure space satisfying the 
 RCD$(0, N)$-condition. Then
\begin{equation*}
	\frac{d}{dt}W_N(u)\leq 0.
\end{equation*}
\end{theorem}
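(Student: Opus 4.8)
The plan is to obtain Theorem~\ref{KL0} as the $K=0$ instance of Theorem~\ref{WNK0}, and I will make the sign of the derivative explicit by a completion-of-square argument built directly on the entropy dissipation formulas of Theorem~\ref{1HH2}. Since $\log(4\pi e t)=1+\log(4\pi t)$, the functional $H_{N,0}$ coincides with $H_N$, so that $W_{N,0}(u)=W_N(u)$; it therefore suffices to control $\frac{d}{dt}W_{N,0}(u)$.

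First I would unwind the definition. Writing $H(u)=-\int_X u\log u\,d\mu$, $H'=\frac{d}{dt}H(u)$, $H''=\frac{d^2}{dt^2}H(u)$, and $H_N(u)=H(u)-\frac{N}{2}\big(1+\log(4\pi t)\big)$, a direct differentiation of $W_N(u)=\frac{d}{dt}\big(tH_N(u)\big)$ gives
\[
\frac{d}{dt}W_N(u)=\frac{d^2}{dt^2}\big(tH_N\big)=2H_N'+tH_N''=2H'+tH''-\frac{N}{2t},
\]
where I used $H_N'=H'-\frac{N}{2t}$ and $H_N''=H''+\frac{N}{2t^2}$.

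Next I would feed in the first- and second-order dissipation results from Theorem~\ref{1HH2}. Formula~\eqref{H1} gives $H'=-\int_X u\,\Delta\log u\,d\mu$, while the second-order inequality~\eqref{H2} specialized to $K=0$ gives $H''\le -\frac{2}{N}\int_X u(\Delta\log u)^2\,d\mu$. Substituting these,
\[
\frac{d}{dt}W_N(u)\le -2\int_X u\,\Delta\log u\,d\mu-\frac{2t}{N}\int_X u(\Delta\log u)^2\,d\mu-\frac{N}{2t}.
\]
The decisive step is to recognize the right-hand side as a completed square: using mass conservation $\int_X u\,d\mu=1$, one verifies
\[
-\frac{2t}{N}\int_X u\Big(\Delta\log u+\frac{N}{2t}\Big)^2 d\mu=-\frac{2t}{N}\int_X u(\Delta\log u)^2 d\mu-2\int_X u\,\Delta\log u\,d\mu-\frac{N}{2t}.
\]
Hence $\frac{d}{dt}W_N(u)\le -\frac{2t}{N}\int_X u\big(\Delta\log u+\frac{N}{2t}\big)^2 d\mu\le 0$, which is the assertion; equality at some $t>0$ would force $\Delta\log u+\frac{N}{2t}=0$.

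I expect the main obstacle to be analytic rather than algebraic: the whole argument rests on the applicability of Theorem~\ref{1HH2}, that is, on verifying that the heat flow solution $u$ (the fundamental solution of $\partial_t u=\Delta u$) satisfies the integrability hypotheses~\eqref{C0}, \eqref{C1}, \eqref{C4} on the RCD$(0,N)$ space. These are exactly what guarantees that the entropy is twice differentiable, that one may differentiate through the regularization $u^\delta$ and pass to the limits $\varepsilon,\delta\to 0$, and that the weak, measure-valued Bochner inequality underlying~\eqref{H2} is licit. On RCD$(0,N)$ spaces these conditions follow from the Gaussian two-sided heat kernel bounds and the Li--Yau Harnack inequality, as recorded in the Remarks after Theorem~\ref{1HH2}; securing them in the nonsmooth setting, rather than the completion of square, is the technical heart of the matter.
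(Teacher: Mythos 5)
Your proof is correct and takes essentially the same route as the paper: there, Theorem~\ref{KL0} is obtained as the $K=0$ case of Theorem~\ref{WNK0}, whose proof likewise writes $\frac{d}{dt}W_N(u)=tH''+2H'-\frac{N}{2t}$, feeds in the dissipation formulas of Theorem~\ref{1HH2} together with the weak Bochner inequality (which is precisely what inequality \eqref{H2} packages), and completes the square to reach $\frac{d}{dt}W_N(u)\leq -\frac{2t}{N}\int_X u\big(\Delta\log u+\frac{N}{2t}\big)^2 d\mu\leq 0$. The only cosmetic difference is that you specialize to $K=0$ at the outset and invoke \eqref{H2} directly, whereas the paper keeps general $K$, works with ${\bf \Gamma_2}$ via the identity \eqref{HH2}, and applies the Bochner inequality afterwards.
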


In order to extend the $W$-entropy formulas (see Ni \cite{Ni1},  Li \cite{Li2012} and S. Li-Li \cite{LiLi2015PJM, LiLi2018JFA, LiLi2018SCM}) from complete Riemannian manifolds to RCD spaces, we introduce the following definition. Recall that in a series of papers of Gigli and his collaborators, the Hessian of a nice function can be defined in the sense of distribution on RCD spaces and the notion of the Ricci curvature measure has been introduced by Gigli et al \cite{Gigili2015MAMS, BGZ, BG, Han2018A, Han2018B}, the notion of the local geometric dimension $n$ of RCD spaces  have been by Han \cite{ Han2018A, Han2018B}. In \cite{BS}, Bru\`e and Semola proved that the local geometric dimension $n$ is indeed a global constant. In this paper, we use $n$ to denote the global geometric dimension of an RCD space $(X, d, \mu)$. 

For the convenience of the readers, we briefly describe the definitions and notations here. Following \cite{Gigili2015MAMS, BGZ, BG, Han2018A, Han2018B},  the tangent module $L^2(TX)$ and the cotangent module $L^2(T^*X)$ of an RCD$(K, N)$ space $(X, d, \mu)$ have been defined as $L^2$-normed modules. The pointwise inner product $\langle\cdot, \cdot\rangle: L^2(T^*X)\times L^2(T^*X)\rightarrow L^1(X) $ is defined by
$$ \langle df, dg\rangle={1\over 4}\left(|\nabla(f+g)|^2-|\nabla(f-g)|^2)\right)$$
for all $f, g\in W^{1, 2}(X)$. For any $g\in W^{1, 2}(X)$, its gradient $\nabla g$ is the unique 
element in $L^2(TX)$ such that
$$\nabla g(df)=\langle df, dg\rangle,\ \ \ \mu-a.e. $$
for all $f\in W^{1, 2}(X)$. Therefore, $L^2(TX)$ inherits a pointwise inner product $\langle\cdot, \cdot\rangle$ from the above inner product $\langle\cdot, \cdot\rangle$  on $L^2(T^*X)$.  To keep the standard notation as used  in  Riemannian geometry, we  use  ${\bf g}$ to denote this inner product $\langle\cdot, \cdot\rangle$ on $L^2(TX)$. 
%
%

The Hessian of a nice function $f$, denoted by ${\rm H}_f$ in \cite{Gigili2015MAMS, Han2018A, Han2018B} and  denoted by $\nabla^2f$ in this paper for keeping the standard notation as in Riemannian geometry, is defined as  the unique bilinear map
$$\nabla^2 f={\rm H}_f: \{\nabla g: g\in {\rm Test} F(X)\}^2\mapsto L^0(X)$$
such that
$$2\nabla^2 f(\nabla g, \nabla h)=\langle \nabla g, \nabla\langle \nabla f, \nabla h\rangle\rangle+
\langle \nabla h, \nabla\langle \nabla f, \nabla g\rangle\rangle
-\langle \nabla f, \nabla\langle \nabla g, \nabla h\rangle\rangle
$$
for any $g, h\in {\rm Test} F(X)$, where 
${\rm Test} F(X)=\{f\in D(\Delta)\cap L^\infty: |\nabla f|\in L^\infty, \Delta f \in W^{1, 2}(X)\}$ is the space of test functions.  Note that
\begin{eqnarray*}
\nabla^2 f(\nabla f, \nabla g)={1\over 2}\langle \nabla |\nabla f|^2, \nabla g\rangle.
\end{eqnarray*}

In \cite{Gigili2015MAMS}, Gigli defined measure valued Ricci tensor on RCD metric measure space
(see also \cite{Han2018A, Han2018B}) as
\begin{equation*}
	{\bf Ric}(\nabla f, \nabla f):={\bf \Gamma_2}(f, f)-\|\nabla^2 f\|_{\rm HS}^2\mu,
\end{equation*}
where
\begin{equation*}
	{\bf \Gamma_2}(f, f):={1\over 2}{\bf \Delta} |\nabla f|^2-\langle \nabla f, \nabla \Delta f\rangle\mu
\end{equation*}
for all nice functions $f$ on RCD space $(X, d, \mu)$, where ${\bf \Delta}$ is the 
Laplacian in the distribution sense.

For nice function $f$ whose Hessian  $\nabla^2 f$ has finite Hilbert-Schmidt norm, i.e., 
$\|\nabla^2 f\|_{\rm HS}<\infty$, the trace of $\nabla^2 f$, denoted by ${\rm Tr}\nabla^2 f$ in this paper 
as in Riemannian geometry, is introduced by Han \cite{Han2018A, Han2018B} as follows: Let $e_1, \ldots, e_n$ be a basis of the $L^2$-tangent module $L^2(TX)$. Then
$${\rm Tr}\nabla^2 f=\sum\limits_{1\leq i, j\leq n}\nabla^2 f(e_i, e_j)\langle e_i, e_j\rangle.$$ 
In our notation, it reads as follows
$${\rm Tr}\nabla^2 f=\langle\nabla^2 f, {\bf g}\rangle.$$

The $N$-dimensional Bakry-Emery Ricci curvature {\bf measure} of the Laplacian 
$$\Delta={\rm Tr}\nabla^2+(\Delta-{\rm Tr}\nabla^2)$$ on 
an $n$-geometric dimensional RCD space $(X, d, \mu)$ is defined by Han \cite{ Han2018A, Han2018B} 
as follows

\begin{equation*}
	{\bf Ric_{N, n}}(\Delta)(\nabla f, \nabla f):={\bf Ric}(\nabla f, \nabla f)
	-{|{\rm Tr}\nabla^2 f-\Delta f|^2\over N-n}.
\end{equation*}

\medskip

With the help of the above definitions and notations,  we introduce the following

	\begin{definition}\label{defRCDKnN} Let $(X, d, \mu)$ be an RCD space, and $n={\rm dim} X$ be the global geometric dimension of $X$. Let $\Delta={\rm Tr}\nabla^2+(\Delta-{\rm Tr}\nabla^2)$  be the splitting  of the Laplacian on $(X, d, \mu)$. We say that  $(X, d, \mu)$ satisfies the Riemannian Bochner formula RBF$(n, N)$  if for all nice $f\in W^{1, 2}(X)\cap D(\Delta)$ and $a\in \mathbb{R}$, it holds
\begin{eqnarray}
{\bf \Gamma_2}(f, f)+2a{\rm Tr}\nabla^2 f+na^2=\left\|\nabla^2 f+a{\bf g}\right\|^2_{\rm HS}+
\operatorname{\bf Ric_{N, n}}(\Delta)(\nabla f, \nabla f)+{|{\rm Tr}\nabla^2f-\Delta f|^2\over N-n}\label{RBFna}\end{eqnarray} 
in the sense of distribution.
%
%

	 	We say that an $n$-geometric dimensional metric measure space $(X, d, \mu)$ satisfies the RCD$(K, n, N)$-condition, if it is an RCD space on which the Riemannain Bochner formula $(\ref{RBFna})$ holds in the sense of distribution and the $N$-dimensional Bakry-Emery Ricci curvature measure ${\bf Ric_{N, n}}(\Delta)$ of $\Delta$ satisfies the following inequality in the sense of distribution 
	
	\begin{equation}\label{RKNn}
	{\bf Ric_{N, n}}(\Delta)(\nabla f, \nabla f)\geq K|\nabla f |^2,\ \ \ \forall f\in W^{1, 2}(X).
\end{equation}

\end{definition}

Note that, if the Riemannian Bochner formula holds, thee following formula holds in distribution
\begin{eqnarray*}\label{RBFnN}
		{\bf \Gamma_{2}}(f, f)=\left\|\nabla^2 f-{{\rm Tr}\nabla^2 f\over n}{\bf g}\right\|_{\rm HS}^2+{|{\rm Tr}\nabla^2 f|^2\over n}+
		{\bf Ric_{N, n}}(\Delta)(\nabla f, \nabla f) +{|{\rm Tr}\nabla^2f-\Delta f|^2\over N-n}.
 \end{eqnarray*}

\medskip

Now we state the $W$-entropy formula on RCD$(K, n, N)$ spaces. It extends the $W$-entropy formula of the Witten Laplacian from $n$-dimensional complete Riemannian manifolds with CD$(K, N)$-condition (\cite{Ni1, Li2012, Li2016SPA,  LiLi2015PJM, LiLi2018JFA}) to $n$-geometric dimensional RCD$(K, n, N)$ metric measure spaces. 

\begin{theorem} \label{WnNK}  
Let $(X, d, \mu)$ be  an RCD$(K, n, N)$ space, where $n\in \mathbb{N}$, $N\geq n$ and $K\in \mathbb{R}$. Let $u$ be a positive solution to the heat equation $\partial_t u=\Delta u$ satisfying the condition as required in Theorem \ref{1HH2}. Then
\begin{equation}
\begin{aligned}
\frac{d}{d t} W_{N, K}(u)
&= -2t\int_X \Big[\left\|\nabla^2 \log u+{1\over 2}\left({1\over t}-K\right){\bf g}\right\|_{\rm HS}^2+({\bf Ric_{N, n}}(\Delta)-K{\bf g})(\nabla \log u, \nabla \log u)\Big]u \mu\\		
& \hskip2cm -{2t\over N-n}\int_X \Big[({\rm Tr}\nabla^2 -\Delta )\log u-{N-n\over 2}\left({1\over t}-K\right)\Big]^2u d\mu. \label{WfnKN}
\end{aligned}
\end{equation}
Moreover, we have
\begin{equation*}
	\frac{d}{dt}W_{N,K}(u)\leq -\frac{2t}{N}\int_X \Big[\Delta \log u+{N\over 2}\left({1\over t}-K\right)\Big]^2u d\mu.
\end{equation*}
In particular,  ${d\over dt}W_{N,K}(u)\leq0$, and ${d\over dt}W_{N,K} (u)=0$ holds at some $t>0$ if and only if at this $t$,

\begin{equation*}
\nabla^2 \log u+{1\over 2}\left({1\over t}-K\right){\bf g}=0, \ \ \ 
{\bf Ric_{N, n}}(\Delta)=K{\bf g},
\end{equation*}
and 
\begin{equation*}
({\rm Tr}\nabla^2 -\Delta ) \log u-{N-n\over 2}\left({1\over t}-K\right)=0.
	\end{equation*}

\end{theorem}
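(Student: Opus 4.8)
The plan is to unfold the definition and reduce everything to the two entropy dissipation formulas of Theorem \ref{1HH2}, closed up by the Riemannian Bochner formula RBF$(n,N)$. Write $H=H(u)=-\int_X u\log u\,d\mu$ and $H_{N,K}=H-\frac N2\log(4\pi e t)-\frac N2Kt(1+\frac16 Kt)$. Since $W_{N,K}(u)=\frac{d}{dt}(tH_{N,K}(u))$, we have $\frac{d}{dt}W_{N,K}(u)=\frac{d^2}{dt^2}(tH_{N,K}(u))=2\frac{d}{dt}H+t\frac{d^2}{dt^2}H+R(t)$, where $R(t)$ is the explicit function of $t$ and $K$ obtained by differentiating $t$ times the correction terms twice. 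By (\ref{H1}) and (\ref{HH2}) I substitute $\frac{d}{dt}H=-\int_X u\,\Delta\log u\,d\mu$ (equivalently $\int_X|\nabla\log u|_w^2u\,d\mu$) and $t\frac{d^2}{dt^2}H=-2t\int_X {\bf \Gamma_2}(\log u,\log u)\,u\,d\mu$.

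The core step is to apply the identity (\ref{RBFna}) to $f=\log u$ with the choice $a=\frac12(\frac1t-K)$, rewriting ${\bf \Gamma_2}(\log u,\log u)$ as $\|\nabla^2\log u+a{\bf g}\|_{\rm HS}^2+{\bf Ric_{N,n}}(\Delta)(\nabla\log u,\nabla\log u)+\frac{|({\rm Tr}\nabla^2-\Delta)\log u|^2}{N-n}-2a\,{\rm Tr}\nabla^2\log u-na^2$. Substituting this into $-2t\int_X{\bf \Gamma_2}(\log u,\log u)u\,d\mu$ and integrating term by term, the defect $\frac{1}{N-n}|({\rm Tr}\nabla^2-\Delta)\log u|^2$ together with the linear term $-2a\,{\rm Tr}\nabla^2\log u$ and the constant $-na^2$ reassembles, after adding and subtracting $2a\,\Delta\log u$, into the perfect square $\frac{1}{N-n}[({\rm Tr}\nabla^2-\Delta)\log u-(N-n)a]^2$; here the cross term ${\rm Tr}\nabla^2\log u$ cancels, and I use $\int_X u\,d\mu=1$ and $\int_X u\,\Delta\log u\,d\mu=-\frac{d}{dt}H$ to collapse the constants. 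The first-order term $2\frac{d}{dt}H$ splits to supply the extra $-K{\bf g}$ inside the Bakry--\'Emery term (the piece $2tK\int_X|\nabla\log u|_w^2u\,d\mu$) and to absorb the $2a\,\Delta\log u$ cross term; the $K$-dependent correction terms in $H_{N,K}$ are chosen precisely so that $R(t)$ cancels the remaining explicit constant $2tNa^2$, yielding exactly (\ref{WfnKN}).

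For the inequality I discard the two manifestly nonnegative contributions and sharpen the rest. By the RCD$(K,n,N)$ condition (\ref{RKNn}) one has $({\bf Ric_{N,n}}(\Delta)-K{\bf g})(\nabla\log u,\nabla\log u)\ge 0$, so it may be dropped. For the remaining squares I combine the trace Cauchy--Schwarz inequality $\|\nabla^2\log u+a{\bf g}\|_{\rm HS}^2\ge\frac1n({\rm Tr}\nabla^2\log u+na)^2$ (using ${\rm Tr}\,{\bf g}=n$) with the elementary estimate $\frac{x^2}{n}+\frac{y^2}{N-n}\ge\frac{(x-y)^2}{N}$, where $x={\rm Tr}\nabla^2\log u+na$ and $y=({\rm Tr}\nabla^2-\Delta)\log u-(N-n)a$, so that $x-y=\Delta\log u+Na=\Delta\log u+\frac N2(\frac1t-K)$. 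This gives $\frac{d}{dt}W_{N,K}(u)\le-\frac{2t}{N}\int_X[\Delta\log u+\frac N2(\frac1t-K)]^2u\,d\mu\le0$. The rigidity statement follows by reading off when each nonnegative term in (\ref{WfnKN}) vanishes: $\nabla^2\log u+a{\bf g}=0$, ${\bf Ric_{N,n}}(\Delta)=K{\bf g}$ tested along $\nabla\log u$, and $({\rm Tr}\nabla^2-\Delta)\log u=(N-n)a$.

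The main obstacle is analytic rather than algebraic: the objects $\nabla^2\log u$, its trace ${\rm Tr}\nabla^2\log u$, the measure-valued ${\bf Ric_{N,n}}(\Delta)$, and the distributional Bochner identity (\ref{RBFna}) are only defined for sufficiently regular functions, whereas $\log u$ is built from a mere heat-flow solution. I would therefore first prove (\ref{WfnKN}) for the regularizations $u^\delta$ and the $e_\varepsilon$-truncated entropy of Theorem \ref{1HH2}, where $\log u^\delta$ is a genuine test function and (\ref{RBFna}) can be paired legitimately with $u^\delta\,d\mu$, and then pass to the limit $\varepsilon\to0$, $\delta\to0$ using the integrability hypotheses (\ref{C1}) and (\ref{C4}), which guarantee convergence of $\frac{d}{dt}H$ and $\frac{d^2}{dt^2}H$. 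Justifying the distributional integration by parts---so that pairing the measure ${\bf \Gamma_2}(\log u,\log u)$ against $u$ is licit and the trace and defect terms behave as in the smooth case---is the delicate point, and it is exactly what the hypotheses inherited from Theorem \ref{1HH2} are designed to control.
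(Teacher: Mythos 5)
Your proposal is correct and takes essentially the same route as the paper: both reduce $\tfrac{d}{dt}W_{N,K}$ to the entropy dissipation formulas of Theorem \ref{1HH2} (yielding the $\Gamma_2$-representation of $\tfrac{d}{dt}W_{N,K}$), then apply the Riemannian Bochner formula \eqref{RBFna} to $f=\log u$ with $a=\tfrac12\left(\tfrac1t-K\right)$, and complete the square in the trace-defect term $({\rm Tr}\nabla^2-\Delta)\log u$ to obtain the $W$-entropy identity. Your explicit derivation of the ``Moreover'' inequality (dropping the Bakry--\'Emery term by \eqref{RKNn} and combining the Hessian square with the defect square via $\tfrac{x^2}{n}+\tfrac{y^2}{N-n}\ge\tfrac{(x-y)^2}{N}$), together with the regularization remarks, merely spells out steps the paper leaves implicit.
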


When $K=0$, denoting $W_{N, 0}$ by $W_N$ for simplicity of notation, we have the following 

\begin{theorem} \label{WnNK=0}  
Let $(X, d, \mu)$ be  an RCD$(0, n, N)$ space, where $n\in \mathbb{N}$ and $N\geq n$. Let $u$ be a positive solution to the heat equation $\partial_t u=\Delta u$ satisfying the condition as required in Theorem \ref{1HH2}. Then
\begin{equation}
\begin{aligned}
\frac{d}{d t} W_{N}(u)
&= -2t\int_X \Big[\left\|\nabla^2 \log u+{{\bf g}\over 2t}\right\|_{\rm HS}^2+{\bf Ric_{N, n}}(\Delta)(\nabla \log u, \nabla \log u)\Big]u d\mu\\		
& \hskip2cm -{2t\over N-n}\int_X \Big[({\rm Tr}\nabla^2 -\Delta )\log u-{N-n\over 2t}\Big]^2u d\mu,\label{WfnKN}
\end{aligned}
\end{equation}
and
\begin{equation}\label{dW=0}
	\frac{d}{dt}W_{N}(u)\leq -\frac{2t}{N}\int_X \Big[\Delta \log u+{N\over 2t}\Big]^2u d\mu \leq 0.
\end{equation}
In particular,  $W_N(u(t))$ is decreasing along the heat equation $\partial_t u=\Delta u$ on $(0, \infty)$, and ${d\over dt}W_{N,K} (u)=0$ holds at some $t>0$ if and only if at this $t$,

\begin{equation*}
\nabla^2 \log u+{{\bf g}\over 2t}=0, \ \ \ {\bf Ric_{N, n}}(\Delta)=0,
\end{equation*}
and 
\begin{equation*}
({\rm Tr}\nabla^2 -\Delta ) \log u-{N-n\over 2t}=0.
	\end{equation*}

\medskip
Moreover, ${d\over dt}W_{N} (u(t))=0$ holds at some $t=t_*>0$ for the  fundamental solution of the heat equation $\partial_t u=\Delta u$ if and only if $(X, d, \mu)$ is one of the following rigidity models:

(i) If $N \geq 2$,  $(X, d, \mu)$ is $(0, N-1)$-cone over an $\operatorname{RCD}(N-2, N-1)$ space and $x$ is the vertex of the cone.

(ii) If $N<2$,  $(X, d, \mu)$ is isomorphic to either $\left([0, \infty), d_{\text {Eucl }}, x^{N-1} \mathrm{~d} x\right)$ or  $\left(\mathbb{R}, d_{\mathrm{Eucl}},|x|^{N-1} \mathrm{~d} x\right)$, where $d_{\text {Eucl }}$ is the canonical Euclidean distance. 

In each of the above cases, $W_N(u(t))$ is a constant on $(0, \infty)$, the Fisher information  $I(u(t))$ is given by $I(u(t))={N\over 2t}$ for all $t\in (0, \infty)$, and there exists some $x_0\in M$ such that
$$\Delta d^2(\cdot, x_0)=2N.$$ 

%
	
\end{theorem}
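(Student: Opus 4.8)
The displayed $W$-entropy formula in this theorem and the inequality \eqref{dW=0} are obtained by specializing Theorem \ref{WnNK} to $K = 0$. Indeed, setting $K = 0$ in the $W$-entropy formula of Theorem \ref{WnNK} makes every $K$-dependent term vanish and yields the stated identity directly. To derive \eqref{dW=0} I would first discard the manifestly nonnegative contribution ${\bf Ric_{N, n}}(\Delta)(\nabla\log u, \nabla\log u) \geq 0$, valid since $(X, d, \mu)$ is RCD$(0, n, N)$, and then compress the two remaining squares into one via Cauchy--Schwarz. Writing $a = {\rm Tr}\nabla^2\log u + \tfrac{n}{2t}$ and $b = ({\rm Tr}\nabla^2 - \Delta)\log u - \tfrac{N-n}{2t}$, one checks $a - b = \Delta\log u + \tfrac{N}{2t}$; the elementary bound $\tfrac{a^2}{n} + \tfrac{b^2}{N-n} \geq \tfrac{(a-b)^2}{N}$ together with the trace inequality $\|\nabla^2\log u + \tfrac{{\bf g}}{2t}\|_{\rm HS}^2 \geq \tfrac1n a^2$ (using ${\rm Tr}\,{\bf g} = n$) reproduces precisely the right-hand side of \eqref{dW=0}, and the monotonicity ${d\over dt}W_N(u) \leq 0$ follows.

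For the equality characterization, if ${d\over dt}W_N(u) = 0$ at some $t > 0$, then each nonnegative integrand must vanish $\mu$-a.e.\ on $\{u > 0\}$, forcing $\nabla^2\log u = -\tfrac{{\bf g}}{2t}$, $({\rm Tr}\nabla^2 - \Delta)\log u = \tfrac{N-n}{2t}$, and ${\bf Ric_{N, n}}(\Delta)(\nabla\log u, \nabla\log u) = 0$; the upgrade to the full tensor identity ${\bf Ric_{N, n}}(\Delta) = 0$ is part of the rigidity conclusion established below. Taking the trace of $\nabla^2\log u = -\tfrac{{\bf g}}{2t}$ gives ${\rm Tr}\nabla^2\log u = -\tfrac{n}{2t}$, and combining with the trace condition yields $\Delta\log u = -\tfrac{N}{2t}$.

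I then specialize to the fundamental solution $u = p_t(x_0, \cdot)$. The sharp two-sided Gaussian bounds recalled in the Remark above, together with the vanishing of the entropy-dissipation defect, pin down $\log u(\cdot, t_*) = -\tfrac{d^2(\cdot, x_0)}{4t_*} + c(t_*)$ at the equality time $t_*$. Inserting this into $\nabla^2\log u = -\tfrac{{\bf g}}{2t}$ and $\Delta\log u = -\tfrac{N}{2t}$ produces the two pointwise identities $\nabla^2\big(\tfrac12 d^2(\cdot, x_0)\big) = {\bf g}$ and $\Delta d^2(\cdot, x_0) = 2N$, the latter being the displayed conclusion. Integrating $\Delta\log u = -\tfrac{N}{2t}$ against $u$ and integrating by parts via $\nabla u = u\,\nabla\log u$ gives $I(u(t)) = \int |\nabla\log u|^2 u\, d\mu = \tfrac{N}{2t}$, which by the first-order dissipation formula \eqref{H1} forces $W_N(u(t))$ to be constant on $(0, \infty)$.

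The extraction of the explicit models is the main obstacle. The identity $\Delta d^2(\cdot, x_0) = 2N$, equivalently $\nabla^2\big(\tfrac12 d^2\big) = {\bf g}$, is exactly the infinitesimal condition characterizing a metric measure cone, so I would invoke the volume-cone-implies-metric-cone theorem of De Philippis--Gigli to conclude that $(X, d, \mu)$ is a metric measure cone with vertex $x_0$. Combined with the RCD$(0, N)$ condition and the cone-rigidity analysis of Kuwada--Li \cite{KL}, this identifies the cross-section: for $N \geq 2$ the space is a $(0, N-1)$-cone over an RCD$(N-2, N-1)$ space with vertex $x_0$, which is case (i); for $N < 2$ no genuine cross-section survives and a dimension-reduction argument collapses the space onto $\big([0, \infty), d_{\rm Eucl}, x^{N-1}\,dx\big)$ or $\big(\mathbb{R}, d_{\rm Eucl}, |x|^{N-1}\,dx\big)$, which is case (ii). The converse direction is checked by computing the heat kernel on each model explicitly: it is the exact Gaussian, so all the pointwise identities above hold for every $t > 0$ and $W_N(u(t))$ is constant, completing the equivalence.
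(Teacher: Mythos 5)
Your treatment of the formula \eqref{WfnKN} and of the inequality \eqref{dW=0} is correct and consistent with the paper: the formula is Theorem \ref{WnNK} specialized to $K=0$, and your compression of the two squares via $a={\rm Tr}\nabla^2\log u+\tfrac{n}{2t}$, $b=({\rm Tr}\nabla^2-\Delta)\log u-\tfrac{N-n}{2t}$, the bound $\tfrac{a^2}{n}+\tfrac{b^2}{N-n}\ge\tfrac{(a-b)^2}{N}$ and the trace inequality is a clean way to get \eqref{dW=0} (the paper instead gets this inequality directly from the weak Bochner inequality, but both are fine). The vanishing-integrand characterization of equality is also fine.

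The rigidity part, however, has a genuine gap. You claim that the two-sided Gaussian bounds of \cite{JLZ}, together with the vanishing of the dissipation defect at $t_*$, ``pin down'' $\log u(\cdot,t_*)=-\tfrac{d^2(\cdot,x_0)}{4t_*}+c(t_*)$. Those bounds are not sharp: they carry $(4\pm\varepsilon)t$ in the exponent and multiplicative constants $C_1(\varepsilon),C_2(\varepsilon)$, so they cannot identify the kernel exactly; and passing from the $\mu$-a.e.\ identity $\nabla^2\log u=-\tfrac{\bf g}{2t_*}$ to the exact Gaussian form would require integrating the Hessian along geodesics, which on a non-smooth RCD space needs precisely the information $\nabla^2\bigl(\tfrac12 d^2\bigr)={\bf g}$ that you are trying to establish --- the step is circular. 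The paper's proof avoids this entirely: from equality at $t_*$ it extracts only $I(u(t_*))=\tfrac{N}{2t_*}$, and then \emph{propagates} this to all $t\in(0,t_*]$ via $h(t)=\tfrac{N}{2}-tI(u(t))$, which is nonnegative by Theorem \ref{Ibound} and satisfies $\tfrac{d}{dt}(th(t))=-t\tfrac{d}{dt}W_N(u(t))\ge 0$, so that $h(t_*)=0$ forces $h\equiv 0$ on $(0,t_*]$. This yields equality in the Li--Yau inequality for $\mu$-a.e.\ $x$ and \emph{all} $t\in(0,t_*]$, and only then does the argument of Kuwada--Li \cite{KL} (their $(4.4)$) produce the weak identity $\Delta d^2(\cdot,x_0)=2N$, from which the models follow by \cite{GV2023} and \cite{HP}. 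Your related assertion that $I(u(t_*))=\tfrac{N}{2t_*}$ at the single time $t_*$ ``forces $W_N(u(t))$ to be constant on $(0,\infty)$'' is a non sequitur for the same reason: constancy needs $I(u(t))=\tfrac{N}{2t}$ for all $t$, which is exactly what the propagation-in-time argument (or the eventual identification of the model) supplies. Your final step, from $\Delta d^2=2N$ to the cone models via De Philippis--Gigli, is the right idea and matches the paper's citations in spirit, but without the time-propagation argument you never legitimately arrive at that identity.
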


 Theorem \ref{WnNK=0}  is a natural extension of the previous results due to Ni \cite{Ni1} for the heat equation of the Laplace-Beltrami operator on compact Riemannian manifolds, and due to the first named author \cite{Li2012} for the heat equation of the Witten Laplacian on $n$-dimensional weighted complete Riemannian manifolds with bounded geometry condition and CD$(0, N)$-condition for $N\geq n$. See also Li \cite{Li2016SPA} and 
 S. Li-Li \cite{LiLi2015PJM}. More precisely, we have the following 

\begin{theorem}\label{Lirigidity}
 (See Theorem 2.3 in \cite{Li2012}) Let $(M, g)$ be a complete Riemannian manifold with bounded geometry condition, $d\mu=e^{-V}dv$, where 
$dv$ is the standard Riemannian volume measure on $(M, g)$, and
$V\in C^4(M)$ with $\nabla V\in C_b^3(M)$. Let $L=\Delta-\nabla V\cdot \nabla$, where $\Delta={\rm Tr}\nabla^2$ is the Laplace-Beltrami operator on $(M, g)$. Let  $u$ be the fundamental solution to the heat equation $\partial_t u=Lu$. Then the following $W$-entropy formula  holds

\begin{eqnarray*}
	\frac{d}{dt}W_N(u)&=&-2t\int_M \left(\Big\|\nabla^2 \log u+\frac{g}{2t} \Big\|_{\operatorname{HS}}^2
	+\Ric_{N, n}(L)(\nabla \log,\nabla \log u)\right)u d\mu \nonumber\\
	& &\hskip1cm -{2t\over N-n}\int_M\Big|\nabla V\cdot \nabla \log u-{N-n\over 2t}\Big|^2 u d\mu, \label{WLi2}
\end{eqnarray*}
In particular, under the  CD$(0, N)$-condition, i.e., $Ric_{N, n}(L)\geq 0$, we have ${d\over dt}W_N(u(t))\leq 0$ for all $t>0$. Moreover, ${d\over dt}W_N(u(t))=0$  holds at some $t=t_*>0$ if and only if $(M, g)$ is isometric to $(\mathbb{R}^n, g_{\rm Euclid})$, $N=n$, $V$ is a constant and $u(x, t)={e^{-{\|x\|^2\over 4t}}\over (4\pi t)^{n\over 2}}$ is the Gaussian heat kernel on $\mathbb{R}^n$ for all $x\in M=\mathbb{R}^n$ and $t>0$. 
  \end{theorem}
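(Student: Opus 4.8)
The plan is to separate the argument into the differential $W$-entropy identity and the rigidity analysis. For the identity, I would start from $W_N(u)=\frac{d}{dt}\big(tH_N(u)\big)=H_N(u)+tH_N'(u)$, which gives $\frac{d}{dt}W_N(u)=2H_N'(u)+tH_N''(u)$. Since $H_N(u)=H(u)-\frac{N}{2}(1+\log(4\pi t))$ with $H(u)=-\int_M u\log u\,d\mu$, the first and second entropy dissipation formulas \eqref{1H1} and \eqref{2H2} (valid for the fundamental solution because, on manifolds with bounded geometry, the integrability condition \eqref{L1} holds) reduce the whole expression to
\[
\frac{d}{dt}W_N(u)=2\int_M|\nabla\log u|^2u\,d\mu-2t\int_M\big[\|\nabla^2\log u\|_{\operatorname{HS}}^2+\Ric(L)(\nabla\log u,\nabla\log u)\big]u\,d\mu-\frac{N}{2t}.
\]

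Next I would show that the right-hand side of the asserted formula expands back to this expression. Using the splitting $\Ric(L)=\Ric_{N,n}(L)+\frac{\nabla V\otimes\nabla V}{N-n}$, the curvature term becomes $-2t\int_M[\|\nabla^2\log u\|_{\operatorname{HS}}^2+\Ric_{N,n}(L)(\nabla\log u,\nabla\log u)+\frac{(\nabla V\cdot\nabla\log u)^2}{N-n}]u\,d\mu$. Completing the matricial square by means of $\langle\nabla^2\log u,g\rangle=\Delta\log u$ and $\|g\|_{\operatorname{HS}}^2=n$ produces $\|\nabla^2\log u+\frac{g}{2t}\|_{\operatorname{HS}}^2$ together with a linear term $-2\int_M(\Delta\log u)u\,d\mu$ and a constant $-\frac{n}{2t}$; completing the scalar square produces $|\nabla V\cdot\nabla\log u-\frac{N-n}{2t}|^2$ together with $+2\int_M(\nabla V\cdot\nabla\log u)u\,d\mu$ and $-\frac{N-n}{2t}$. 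The two linear contributions combine into $-2\int_M(L\log u)u\,d\mu=2\int_M|\nabla\log u|^2u\,d\mu$ and the two constants into $-\frac{N}{2t}$, so the expansion reproduces the display above. The monotonicity is then immediate: under $\Ric_{N,n}(L)\geq 0$ both integrands are nonnegative, whence $\frac{d}{dt}W_N(u)\leq 0$, and dropping the trace-free part of the first square yields the sharper bound controlled by $(\Delta\log u+\frac{N}{2t})^2$.

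For the rigidity, assume $\frac{d}{dt}W_N(u)=0$ at some $t_*>0$. Because $u>0$ and the integrands are continuous and nonnegative, each must vanish identically on $M$, giving the three pointwise conditions $\nabla^2\log u=-\frac{g}{2t_*}$, $\Ric_{N,n}(L)(\nabla\log u,\nabla\log u)=0$, and $\nabla V\cdot\nabla\log u=\frac{N-n}{2t_*}$. Setting $w=-2t_*\log u$ turns the first into $\nabla^2 w=g$; by the classical Tashiro-type theorem, a complete Riemannian manifold carrying a function whose Hessian equals the metric is isometric to $(\mathbb{R}^n,g_{\rm Euclid})$, with $w=\frac12\|x-x_0\|^2+\text{const}$ for some $x_0$, and hence $\log u=-\frac{\|x-x_0\|^2}{4t_*}+\text{const}$.

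Finally I would pin down the weight. On $\mathbb{R}^n$ one has $\nabla\log u=-\frac{x-x_0}{2t_*}$, so the drift identity becomes $\langle\nabla V,x-x_0\rangle=-(N-n)$; integrating the radial ODE $r\,\partial_r V=-(N-n)$ in polar coordinates centered at $x_0$ forces $V=-(N-n)\log r+(\text{angular part})$, which is singular at $x_0$ whenever $N>n$, contradicting $V\in C^4(M)$; hence $N=n$. For $N=n$ the tensor $\Ric_{N,n}(L)=\Ric+\nabla^2V-\frac{\nabla V\otimes\nabla V}{N-n}$ is finite only if $\nabla V\equiv 0$, so $V$ is constant, $L=\Delta$, $d\mu=dv$, and the fundamental solution is the standard Gaussian heat kernel on $\mathbb{R}^n$. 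The step I expect to be the main obstacle is this rigidity analysis: rigorously passing from the vanishing of the integrals to the everywhere-pointwise identities, invoking the Tashiro classification in the complete noncompact weighted setting, and running the ODE/smoothness argument that excludes $N>n$. The differential identity itself is essentially bookkeeping of two completed squares once \eqref{1H1}, \eqref{2H2} and the generalized Bochner formula are available.
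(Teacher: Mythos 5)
Your proposal is correct, but note a structural point first: the paper does not prove Theorem \ref{Lirigidity} at all — it quotes it as Theorem 2.3 of \cite{Li2012}, as background for the RCD results. Measured against what the paper actually does, your two halves line up as follows. The identity part is the same bookkeeping the paper performs in Section 5 for Theorems \ref{WNK0} and \ref{WnNK}: write $\frac{d}{dt}W_N=tH''+2H'-\frac{N}{2t}$, insert the dissipation formulas \eqref{1H1}--\eqref{2H2} (valid for the fundamental solution under bounded geometry, as Remark 3.1 records), and complete two squares; expanding the claimed right-hand side back to that expression, as you do, is the identical computation run in reverse, and your algebra (trace term $-2\int(\Delta\log u)u\,d\mu$, drift term $+2\int(\nabla V\cdot\nabla\log u)u\,d\mu$, constants $-\frac{n}{2t}-\frac{N-n}{2t}=-\frac{N}{2t}$, recombination into $-2\int(L\log u)u\,d\mu=2\int|\nabla\log u|^2u\,d\mu$) checks out. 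Your rigidity argument — pointwise vanishing of the nonnegative integrands, $\nabla^2(-2t_*\log u)={\bf g}$, Tashiro's theorem, then exclusion of $N>n$ — is precisely the classical smooth-case route of \cite{Li2012} and is sound; one simplification: since $\nabla V\cdot\nabla\log u=\frac{N-n}{2t_*}$ holds everywhere and $\nabla\log u(x_0)=0$ at the center $x_0$, evaluating at $x_0$ gives $N=n$ immediately, with no radial ODE or singularity analysis needed. By contrast, the paper's own rigidity proofs in the non-smooth setting (the rigidity part of Theorem \ref{WnNK=0}, following Kuwada--Li \cite{KL}) cannot invoke Tashiro or pointwise Hessian identities; they instead show $I(u(t))=\frac{N}{2t}$ on $(0,t_*]$ via the monotonicity of $th(t)$, upgrade this to equality in the Li--Yau inequality, deduce $\Delta d^2(\cdot,x_0)=2N$, and conclude with the volume-cone rigidity theorems of \cite{GV2023, HP} — this is the genuinely different machinery required once the smooth structure is gone, and it is why the RCD conclusion is a cone rather than Euclidean space. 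Two small caveats for completeness: the displayed formula presupposes $N>n$ (for $N=n$ the last term is dropped, and the CD$(0,n)$ condition itself forces $\nabla V\equiv0$, which is the convention your final step implicitly uses), and since the statement is an equivalence you should record the trivial converse verification that for the Gaussian kernel on $(\mathbb{R}^n,g_{\rm Euclid})$ with $V$ constant both completed squares vanish identically for every $t>0$.
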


Theorem \ref{WnNK=0} can also regarded as an improvement of the monotonicity and rigidity theorem of the $W$-entropy on RCD$(0, N)$ spaces originally proved  by  Kuwada-Li \cite{KL}. The 
new part of Theorem  \ref{WnNK=0} is the $W$-entropy formula \eqref{WfnKN} on RCD$(0, n, N)$ spaces, which implies the monotonicity of $W_N(u(t))$ proved in  Kuwada-Li \cite{KL} by a different way. The rigidity part of  Theorem \ref{WnNK=0} is indeed as the same as in  Kuwada-Li \cite{KL}. More precisely, we have

\begin{theorem} (See Theorem 4.1 in \cite{KL})\label{KLRigidity} Let $(X, d, \mu)$ be an RCD$(0, N)$ space. Then $W_N(u(t))$ is decreasing along the heat equation $\partial_t u=\Delta u$. If  
	 the right upper derivative of $W\left(P_t \mu\right)$ is 0 
	at $t=$ $t_* \in(0, \infty)$, that is,
$$
\varlimsup_{t \downarrow t_*} \frac{W\left(P_t \mu\right)-W\left(P_{t_*} \mu\right)}{t-t_*}=0,
$$
for some $\mu \in \mathcal{P}_2(X)$ and $t_* \in(0, \infty)$, where $P_t=e^{t\Delta}$ is the 
heat semigroup of $\Delta$, then $(X, d, \mu)$ must be the RCD space  as stated in Theorem \ref{WnNK=0}. Moreover,  we have
	
(i) $I(P_t\mu)=\frac{N}{2t}$ holds  for any $t \in\left(0, t_*\right]$.

(ii) For $\mu$-a.e. $x_0 \in X$,
\begin{equation}\label{Fishers}
	I\left(P_t \delta_{x_0}\right)=\frac{N}{2 t},
\end{equation}
holds for any $t \in\left(0, t_*\right]$. In particular, there exists $x_0 \in X$ satisfying \eqref{Fishers}.
\end{theorem}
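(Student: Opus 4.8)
The plan is to read off the monotonicity directly from the $W$-entropy formula and then to extract the rigidity from the equality case of an ODE comparison for the Fisher information. For the monotonicity, apply Theorem \ref{WNK0} with $K=0$ (equivalently Theorem \ref{WnNK=0}) to $u(t)=P_t\mu$, which gives
\[
\frac{d}{dt}W_N(u)\le -\frac{2t}{N}\int_X\Big[\Delta\log u+\frac{N}{2t}\Big]^2u\,d\mu\le 0,
\]
so $W_N(u(t))$ is non-increasing on $(0,\infty)$. The whole difficulty is therefore in the rigidity.

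First I would set up the Fisher-information ODE. By part (2) of Theorem \ref{1HH2}, $\frac{d}{dt}H(u_t)=I(u_t)$, and by the $K=0$ case of \eqref{H2}, $\frac{d}{dt}I(u_t)=\frac{d^2}{dt^2}H(u_t)\le -\frac{2}{N}\int_X u(\Delta\log u)^2\,d\mu$. The Cauchy--Schwarz inequality $\int_X u(\Delta\log u)^2\,d\mu\ge\big(\int_X u\Delta\log u\,d\mu\big)^2=I(u_t)^2$ (using $\int_X u\,d\mu=1$ and $\int_X u\Delta\log u\,d\mu=-I(u_t)$) yields $I'\le-\frac{2}{N}I^2$; in particular $t\mapsto I(u_t)$ is non-increasing since $H$ is concave along the flow. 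Writing $J=1/I$ gives $J'\ge\frac{2}{N}$, so that for $0<t<t_*$,
\[
\frac{1}{I(u_{t_*})}-\frac{1}{I(u_t)}\ge\frac{2}{N}(t_*-t),
\]
while the sharp Li--Yau bound $I(u_t)\le\frac{N}{2t}$ (from Lemma \ref{zhulemma}, or again from the ODE) gives $1/I(u_t)\ge 2t/N$.

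Next I would show the hypothesis forces $I(u_{t_*})=\frac{N}{2t_*}$. Using Jensen in the bracketed term, $-\frac{d}{dt}W_N\ge\frac{2t}{N}\big(\frac{N}{2t}-I(u_t)\big)^2$. If $I(u_{t_*})<\frac{N}{2t_*}$, then by monotonicity of $I$ the quantity $\frac{N}{2s}-I(u_s)$ stays bounded below by a positive constant for $s$ in a right neighbourhood of $t_*$, so $-\frac{d}{dt}W_N\ge c>0$ there, giving $\frac{W_N(t)-W_N(t_*)}{t-t_*}=\frac{1}{t-t_*}\int_{t_*}^t W_N'\le -c<0$ and contradicting $\varlimsup_{t\downarrow t_*}(W_N(t)-W_N(t_*))/(t-t_*)=0$. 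Hence $1/I(u_{t_*})=2t_*/N$; feeding this into the displayed comparison together with $1/I(u_t)\ge 2t/N$ forces $1/I(u_t)=2t/N$ for every $t\in(0,t_*]$, which is (i). Equality throughout the chain saturates Cauchy--Schwarz, so $\Delta\log u_t\equiv-\frac{N}{2t}$ ($u_t\mu$-a.e.), and it saturates the Bochner inequality \eqref{H2}, yielding $\nabla^2\log u_t=-\frac{1}{2t}{\bf g}$ and ${\bf Ric_{N,n}}(\Delta)=0$ along the flow.

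Finally, for (ii) I would disintegrate $P_t\mu=\int_X P_t\delta_{x_0}\,d\mu(x_0)$ and use convexity of the Fisher information together with the sharp bound $I(P_t\delta_{x_0})\le\frac{N}{2t}$ to force equality \eqref{Fishers} for $\mu$-a.e.\ base point $x_0$, whence in particular some $x_0$ satisfies \eqref{Fishers}. For such an $x_0$ the saturated Bochner identity for $p_t(x_0,\cdot)$ produces the rigidity of the Laplacian comparison, $\Delta d^2(\cdot,x_0)=2N$, and by the metric-cone rigidity for $\mathrm{RCD}(0,N)$ spaces (``volume cone implies metric cone'') one identifies $(X,d,\mu)$ with a metric measure cone with vertex $x_0$, giving the classification into cases (i) and (ii) of Theorem \ref{WnNK=0}. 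I expect the last step to be the main obstacle: converting the pointwise Hessian and Ricci equalities plus the Laplacian-comparison rigidity into the global cone structure requires the full RCD cone-rigidity machinery, and the disintegration argument for (ii) must be handled carefully so that the equality case genuinely descends to $\mu$-a.e.\ component.
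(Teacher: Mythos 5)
Your monotonicity step and part (i) are correct and, despite the different packaging, amount to the same argument as the paper's. The paper propagates equality backwards with the auxiliary function $h(t)=\frac{N}{2}-tI(u(t))$ and the identity $\frac{d}{dt}\left(th(t)\right)=-t\frac{d}{dt}W_N(u(t))\ge 0$; your Riccati route $I'\le -\frac{2}{N}I^2$, hence $\left(1/I\right)'\ge \frac{2}{N}$, is literally equivalent, since $\frac{d}{dt}W_N=2I-\frac{N}{2t}+tI'$, so the inequality $\frac{d}{dt}W_N\le -\frac{2t}{N}\left(I-\frac{N}{2t}\right)^2$ that you get from Jensen is exactly the statement $I'\le -\frac{2}{N}I^2$. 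Your contradiction argument deducing $I(u_{t_*})=\frac{N}{2t_*}$ from the right upper derivative hypothesis is in fact more careful than the paper's reproduced proof, which simply assumes $\frac{d}{dt}W_N(u(t_*))=0$ and defers the limsup formulation to \cite{KL}; and your disintegration-plus-convexity argument for (ii) matches what \cite{KL} does (the paper does not reproduce that part).

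The genuine gap is in the final structural step, and it is not quite where you located it. You pass from the Fisher equality to the cone via saturation of the Bochner inequality, claiming $\nabla^2\log u_t=-\frac{1}{2t}{\bf g}$ and ${\bf Ric_{N,n}}(\Delta)=0$, and then asserting that this ``produces'' $\Delta d^2(\cdot,x_0)=2N$. Two problems. First, the theorem concerns a general RCD$(0,N)$ space, whereas the Hessian/trace/${\bf Ric_{N,n}}(\Delta)$ decomposition of ${\bf \Gamma_2}$ is only available (in this paper, via Definition \ref{defRCDKnN}) on RCD$(K,n,N)$ spaces where the Riemannian Bochner formula is assumed; on a bare RCD$(0,N)$ space, saturating the weak Bochner inequality \eqref{H2} only gives ${\bf \Gamma_2}(\log u,\log u)=\frac{1}{N}|\Delta\log u|^2$ in a weak sense, and extracting a pointwise Hessian identity from that is precisely what you cannot do without the extra structure. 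Second, even granting the Hessian identity, the implication ``Hessian identity $\Rightarrow\ \Delta d^2(\cdot,x_0)=2N$'' is the smooth-manifold integration argument and needs a real proof on a metric measure space. The paper's (and \cite{KL}'s) route avoids both issues: since the Li--Yau inequality $|\nabla\log u|^2-\partial_t\log u\le \frac{N}{2t}$ is a pointwise bound and $I(u(t))=\frac{N}{2t}$ is its integral against $u\,d\mu$, the Fisher equality forces pointwise a.e. equality in Li--Yau; then the computation (4.4) of \cite{KL} converts this into the weak identity $-\int_X\langle\nabla d^2(\cdot,x_0),\nabla f\rangle\,d\mu=2N\int_X f\,d\mu$, i.e.\ $\Delta d^2(\cdot,x_0)=2N$; finally the classification is quoted from Theorem 6.1 of \cite{GV2023} and Theorem 2.8 of \cite{HP}, so the ``cone machinery'' is a citation, not an obstacle. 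To repair your proof, replace the Bochner-saturation step by the Li--Yau equality step; everything else in your proposal survives.
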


We now turn to state the second part of results on the concavity of the Shannon entropy power on RCD spaces. 
For the relationship between the $W$-entropy, the Fisher information $I$ and the Shannon entropy power $N$, see Theorem \ref{thmNIW} in Section 6 below. 

\begin{theorem}
Let $(X, d, \mu)$ be a metric measure space satisfying the Riemannian curvature dimension condition
 RCD$(K, N)$. Let $\mathcal{N}(u(t))=e^{2H(u(t))\over N}$ be the Shannon entropy power for the heat equation $\partial_t u=\Delta u$. Then
	\begin{equation*}
		\begin{split}
			\frac{N}{2\mathcal{N}}\Big[\frac{d^2\mathcal{N}}{dt^2}+2K\frac{d\mathcal{N}}{dt}\Big]&\leq-\frac{2}{N}\int_X \Big[\Delta \log u-\int_X u\Delta \log ud\mu \Big]^2 ud\mu.
		\end{split}
	\end{equation*}
In particular, 
\begin{equation*}
	\frac{N}{2\mathcal{N}}\Big[\frac{d^2\mathcal{N}}{dt^2}+2K\frac{d\mathcal{N}}{dt}\Big]\leq 0.
\end{equation*}

\end{theorem}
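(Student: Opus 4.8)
The plan is to reduce everything to the single function $h(t):=H(u(t))$ and its first two derivatives, and then feed in the entropy dissipation formulas of Theorem \ref{1HH2}. Writing $\mathcal{N}=e^{2h/N}$ and differentiating twice,
\begin{equation*}
\frac{d\mathcal{N}}{dt}=\frac{2}{N}h'\,\mathcal{N},\qquad
\frac{d^2\mathcal{N}}{dt^2}=\left(\frac{2}{N}h''+\frac{4}{N^2}(h')^2\right)\mathcal{N},
\end{equation*}
so that a direct computation gives the clean identity
\begin{equation*}
\frac{N}{2\mathcal{N}}\Big[\frac{d^2\mathcal{N}}{dt^2}+2K\frac{d\mathcal{N}}{dt}\Big]
=h''+\frac{2}{N}(h')^2+2Kh'.
\end{equation*}
Thus it suffices to bound this right-hand side from above.

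Next I would substitute the dissipation formulas. By the first order formula \eqref{H1}, together with the integration by parts $\int_X u\,\Delta\log u\,d\mu=-\int_X u\,|\nabla\log u|_w^2\,d\mu$ (which follows from $\nabla u=u\nabla\log u$), one has
\begin{equation*}
h'=-\int_X u\,\Delta\log u\,d\mu=\int_X u\,|\nabla\log u|_w^2\,d\mu=I(u)\ge 0 .
\end{equation*}
By the second order dissipation inequality \eqref{H2},
\begin{equation*}
h''\le -\frac{2}{N}\int_X u\,(\Delta\log u)^2\,d\mu-2K\int_X u\,|\nabla\log u|_w^2\,d\mu .
\end{equation*}
The crucial observation is that the curvature term produced by \eqref{H2} is exactly $-2K\int_X u\,|\nabla\log u|_w^2\,d\mu=-2Kh'$, so adding the correction $2Kh'$ cancels it identically. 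This is precisely why the factor $2K\frac{d\mathcal{N}}{dt}$ (and not some other multiple) is the natural $K$-correction in the statement.

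After this cancellation,
\begin{equation*}
h''+\frac{2}{N}(h')^2+2Kh'
\le -\frac{2}{N}\int_X u\,(\Delta\log u)^2\,d\mu+\frac{2}{N}\Big(\int_X u\,\Delta\log u\,d\mu\Big)^2 .
\end{equation*}
Finally, since $u(t)\,d\mu$ is a probability measure (mass is conserved along the heat flow), the right-hand side is exactly $-\frac{2}{N}$ times the variance of $\Delta\log u$ under $u\,d\mu$, namely
\begin{equation*}
-\frac{2}{N}\int_X u\Big[\Delta\log u-\int_X u\,\Delta\log u\,d\mu\Big]^2 d\mu ,
\end{equation*}
which is the asserted bound; its nonpositivity yields the ``in particular'' statement. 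I expect the only genuine difficulty to be analytic rather than algebraic: one must verify that $t\mapsto h(t)$ is twice differentiable with the stated dissipation identities, i.e.\ that the integrability hypotheses \eqref{C1} and \eqref{C4} of Theorem \ref{1HH2} hold for the solution $u(t)$. Once this is granted the remaining manipulations are elementary, and the single conceptual point is the exact cancellation of the curvature term described above, followed by completing the square into a variance.
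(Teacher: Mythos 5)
Your proof is correct, and every step is justified by results the paper itself establishes: the identity $\tfrac{N}{2\mathcal{N}}\bigl[\mathcal{N}''+2K\mathcal{N}'\bigr]=H''+\tfrac{2}{N}(H')^2+2KH'$ is the derivative computation in the proof of Theorem \ref{K-concavity}; the identification $H'=I(u)=-\int_X u\,\Delta\log u\,d\mu\ge 0$ is \eqref{H1} together with Lemma \ref{Fisher}; the bound on $H''$ is \eqref{H2}; and your final completion of the square into a variance (valid because $\int_X u\,d\mu=1$) is exactly the intermediate step in the paper's proof of the entropy differential inequality \eqref{EDI} in Theorem \ref{EDIKM}, before the paper discards the variance term by Cauchy--Schwarz. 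You are also right that the only real hypotheses are the integrability conditions of Theorem \ref{1HH2}.

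The route differs from the paper's own proof of this statement, however. The paper proves the variance-strengthened inequality as Theorem \ref{ECPKN}, and does so through the NIW formula (Theorem \ref{thmNIW}): it writes $\tfrac{N}{2\mathcal{N}}\mathcal{N}''$ in terms of the Fisher information and $\tfrac{1}{t}\tfrac{dW_{N,K}}{dt}$, expands the $W$-entropy derivative via ${\bf\Gamma}_2$, and only then invokes the weak Bochner inequality; the EDI route appears in the paper only for the weaker conclusion $\mathcal{N}''+2K\mathcal{N}'\le 0$ (Theorem \ref{K-concavity}). Your argument shows that the detour through the $W$-entropy is unnecessary for the variance bound: keeping the variance term in the EDI computation of Theorem \ref{EDIKM} and combining it with the elementary derivative identity already yields Theorem \ref{ECPKN}. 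What your approach buys is brevity and independence from the $W$-entropy machinery of Section 5; what the paper's approach buys is the explicit link between the entropy power, the Fisher information and the $W$-entropy, which is of independent interest (it is what makes the ``second proof'' of $K$-concavity possible and ties Section 6 to the monotonicity results of Section 5). Substantively the two proofs rest on the same two ingredients --- the weak Bochner inequality of the RCD$(K,N)$ condition (already embedded in \eqref{H2}) and the Cauchy--Schwarz/variance decomposition --- so neither is more general than the other.
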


As a corollary,  we have the following result due to Ebar, Kuwada and Sturm \cite{EKS2015}. In the setting of complete Riemannian manifolds, see S. Li and Li \cite{LiLi2024TMJ}. 

\begin{corollary}\label{EKSN} Let  $\mathcal{N}(u)=e^{2H(u)\over N}$ be the Shannon entropy power associated with the heat equation $\partial_t u=\Delta u$
	 on an RCD$(0, N)$ space. Then 
	 \begin{equation*}
	\frac{d^2\mathcal{N}(u)}{dt^2} \leq 0.
\end{equation*}
That is to say,  the Shannon entropy power is concave on RCD$(0,N)$ spaces.
\end{corollary}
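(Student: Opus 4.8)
The plan is to obtain Corollary \ref{EKSN} as the $K=0$ specialization of the immediately preceding theorem, so essentially no new work is required beyond recording one observation about the sign of the entropy power. First I would invoke that theorem, which for a general RCD$(K,N)$ space asserts the $K$-concavity inequality
\[
\frac{N}{2\mathcal{N}}\Big[\frac{d^2\mathcal{N}}{dt^2}+2K\frac{d\mathcal{N}}{dt}\Big]\leq-\frac{2}{N}\int_X\Big[\Delta\log u-\int_X u\Delta\log u\,d\mu\Big]^2 u\,d\mu\leq 0
\]
for the Shannon entropy power $\mathcal{N}(u(t))=e^{2H(u(t))/N}$ along the heat flow $\partial_t u=\Delta u$. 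An RCD$(0,N)$ space is precisely an RCD$(K,N)$ space with $K=0$, so the theorem applies verbatim under this choice of $K$, and the hypotheses on $u$ (positivity and the integrability conditions of Theorem \ref{1HH2}) are inherited unchanged.

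Next I would set $K=0$. The cross term $2K\,\frac{d\mathcal{N}}{dt}$ then vanishes identically, and the inequality collapses to
\[
\frac{N}{2\mathcal{N}}\,\frac{d^2\mathcal{N}}{dt^2}\leq 0.
\]
Finally I would remove the prefactor: since $\mathcal{N}(u(t))=e^{2H(u(t))/N}$ is the exponential of a real quantity it is strictly positive, and with $N\geq 1$ the multiplier $\frac{N}{2\mathcal{N}}$ is strictly positive; dividing through preserves the sense of the inequality and yields $\frac{d^2\mathcal{N}(u)}{dt^2}\leq 0$, which is exactly the asserted concavity of the Shannon entropy power in $t$.

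The substantive analytic content is already discharged in the proof of the preceding theorem — differentiating $H(u)$ twice, invoking the second-order entropy dissipation inequality \eqref{H2} of Theorem \ref{1HH2} to control $\frac{d^2}{dt^2}H(u)$, and applying Jensen's (or Cauchy--Schwarz) inequality to extract the nonnegative variance term $\int_X\big[\Delta\log u-\int_X u\Delta\log u\,d\mu\big]^2 u\,d\mu$ on the right-hand side. Consequently there is no genuine obstacle in the corollary itself: the only point worth isolating is the strict positivity of $\mathcal{N}$, which licenses cancelling the factor $\frac{N}{2\mathcal{N}}$. I would therefore present the corollary's proof as a one-line deduction, emphasizing that it specializes the $K$-concavity of $\mathcal{N}$ to the curvature-free case where $K$-concavity reduces to ordinary concavity.
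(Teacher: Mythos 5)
Your proposal is correct and takes essentially the same route as the paper: the corollary is stated there as the immediate $K=0$ specialization of the preceding $K$-concavity theorem, whose right-hand side variance term is nonpositive, so the cross term drops and the strictly positive prefactor $\frac{N}{2\mathcal{N}}$ is cancelled to give $\frac{d^2\mathcal{N}}{dt^2}\leq 0$. The only cosmetic difference is that you spell out the positivity of $\mathcal{N}=e^{2H/N}$ explicitly, which the paper leaves implicit.
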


 As an application of the concavity of Shannon entropy power on RCD$(0, N)$ spaces, 
 we have the following  entropy isoperimetric inequality and the Stam type Log-Sobolev inequality on RCD$(0, N)$ spaces with maximal volume growth condition.
\begin{theorem}\label{StamLSIA}
	Let $(X, d, \mu)$ be an 
 RCD$(0, N)$ space satisfying the maximal volume growth condition: for all $ x \in X$ and $r>0$, there exists a constant $ C_n > 0$ such that
		\begin{equation*}\label{deng1}
		\mu(B(x,r))\geq C_n r^N,
	\end{equation*} where $B(x, r)=\{y\in X: d(x, y)\leq r\}$ for any $x\in X$ and $r>0$.
	Then
	\begin{equation}\label{deng4}
		I(f)\mathcal{N}(f)\geq\gamma_N:=2\pi Ne\kappa^{\frac{2}{N}},
	\end{equation}
	where
	\begin{equation*}
		\kappa:=\lim_{r\rightarrow\infty}\frac{\mu(B(x,r))}{\omega_N r^N},
	\end{equation*}
	and $\omega_N$ denotes the volume of the unit ball in $\mathbb{R}^N$.
	Moreover, inequality \eqref{deng4} is equivalent to the Stam type  Log-Sobolev inequality: for all nice $f
	>0$ with $\int_X {|\nabla f |^2_w \over f}d\mu<+\infty$, it holds
\begin{equation}
	\int_X f\log f d\mu \leq \frac{N}{2}\log\Big(\frac{1}{\gamma_N}\int_X {|\nabla f |^2_w \over f}d\mu\Big).\label{StamLSIA}
\end{equation}
\end{theorem}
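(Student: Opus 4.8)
The plan is to deduce the entropy isoperimetric inequality \eqref{deng4} from the concavity of the Shannon entropy power (Corollary \ref{EKSN}) by a Costa-type argument, and then to obtain \eqref{StamLSIA} from \eqref{deng4} by elementary algebra. Fix a nice probability density $f>0$ and run the heat flow $u(t)=P_t f$ with $u(0)=f$. Differentiating $\mathcal{N}(u)=e^{2H(u)/N}$ and using the first order entropy dissipation formula $\frac{d}{dt}H(u)=I(u)$ from Theorem \ref{1HH2}, I obtain the identity
\begin{equation*}
\frac{d}{dt}\mathcal{N}(u(t))=\frac{2}{N}\,I(u(t))\,\mathcal{N}(u(t)),
\qquad\text{i.e.}\qquad
I(u(t))\,\mathcal{N}(u(t))=\frac{N}{2}\,\frac{d}{dt}\mathcal{N}(u(t)).
\end{equation*}
By Corollary \ref{EKSN} the function $t\mapsto\mathcal{N}(u(t))$ is concave on $(0,\infty)$, so its derivative is non-increasing, and hence $t\mapsto I(u(t))\mathcal{N}(u(t))$ is non-increasing. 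Using the continuity of the entropy and of the Fisher information along the flow issued from a nice density, letting $t\downarrow 0$ gives
\begin{equation*}
I(f)\,\mathcal{N}(f)\;\geq\;\lim_{t\to\infty}I(u(t))\,\mathcal{N}(u(t))
=\frac{N}{2}\lim_{t\to\infty}\frac{d}{dt}\mathcal{N}(u(t))
=\frac{N}{2}\lim_{t\to\infty}\frac{\mathcal{N}(u(t))}{t},
\end{equation*}
where the last equality is the elementary fact that a concave function whose slope decreases to a limit grows asymptotically linearly with that slope.

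It remains to evaluate the asymptotic slope, and this is where the maximal volume growth condition enters; I expect it to be the main obstacle. Equivalently, one must identify the long-time behaviour of the normalized entropy, namely
\begin{equation*}
\lim_{t\to\infty}\Big(H(u(t))-\frac{N}{2}\log(4\pi e t)\Big)=\log\kappa,
\end{equation*}
so that $\mathcal{N}(u(t))/t\to 4\pi e\,\kappa^{2/N}$ and therefore $I(f)\mathcal{N}(f)\geq\frac{N}{2}\cdot 4\pi e\,\kappa^{2/N}=\gamma_N$, which is \eqref{deng4}. I would prove this entropy asymptotic from the two-sided Gaussian heat kernel estimates on RCD$(0,N)$ spaces recalled earlier, together with the structure theory under maximal volume growth: the tangent cone at infinity is a metric measure cone whose volume density is controlled by the asymptotic volume ratio $\kappa=\lim_{r\to\infty}\mu(B(x,r))/(\omega_N r^N)$, and the resulting long-time heat kernel profile yields exactly the correction term $\log\kappa$. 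A further approximation step is needed to check that this limit is independent of the initial density $f$, which is what allows \eqref{deng4} to hold for every nice $f$ rather than only for the fundamental solution.

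Finally, the equivalence of \eqref{deng4} with the Stam type logarithmic Sobolev inequality \eqref{StamLSIA} is purely algebraic and uses no geometry. Substituting $\mathcal{N}(f)=e^{2H(f)/N}$ with $H(f)=-\int_X f\log f\,d\mu$ and $I(f)=\int_X\frac{|\nabla f|_w^2}{f}\,d\mu$ into $I(f)\mathcal{N}(f)\geq\gamma_N$, taking logarithms and rearranging gives
\begin{equation*}
\int_X f\log f\,d\mu\;\leq\;\frac{N}{2}\log\Big(\frac{1}{\gamma_N}\int_X\frac{|\nabla f|_w^2}{f}\,d\mu\Big),
\end{equation*}
and since every manipulation is reversible the two statements are equivalent. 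The only genuinely new analytic input beyond Corollary \ref{EKSN} is thus the identification of the asymptotic slope with $\gamma_N$.
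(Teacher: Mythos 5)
Your overall architecture coincides with the paper's: the concavity of the entropy power (Corollary \ref{EKSN}) makes $\frac{d}{dt}\mathcal{N}(u(t))=\frac{2}{N}I(u(t))\mathcal{N}(u(t))$ non-increasing, the value at $t=0$ dominates the limit at infinity, L'Hospital converts the asymptotic slope into $\lim_{t\to\infty}\mathcal{N}(u(t))/t$, and everything reduces to the entropy asymptotic $\lim_{t\to\infty}\bigl(H(u(t))-\frac{N}{2}\log(4\pi e t)\bigr)=\log\kappa$; the final algebraic equivalence with the Stam inequality is also the same. The genuine gaps are exactly in the two analytic inputs you defer. First, the independence of that limit from the initial density $f$ is not a routine approximation step: the paper proves it quantitatively by combining the entropy--Wasserstein--Fisher estimate that CD$(0,N)$ provides (Theorem 3.28 of \cite{EKS2015}), namely $|H(P_tf)-H(P_th)|\leq\max\{\sqrt{I(P_tf)},\sqrt{I(P_th)}\}\,W_2(P_tf\,d\mu,P_th\,d\mu)$, with the Wasserstein contraction $W_2(P_tf\,d\mu,P_th\,d\mu)\leq W_2(f\,d\mu,h\,d\mu)$ and the universal bound $I(P_tf)\leq\frac{N}{2t}$ of Theorem \ref{Ibound}; letting $h$ approximate $\delta_o$ gives $|H(P_tf)-H(p_t(o,\cdot))|\leq\sqrt{N/(2t)}\,W_2(f\,d\mu,\delta_o)\to 0$. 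Without some such quantitative statement, your argument proves \eqref{deng4} only for the fundamental solution, not for every nice $f$.

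Second, your plan to extract the sharp asymptotic $\lim_{t\to\infty}H_N(p_t)=\log\kappa$ from the two-sided Gaussian bounds recalled in Section 3 cannot work as stated: those bounds carry $\varepsilon$-dependent multiplicative constants $C_1(\varepsilon)$ and Gaussian exponents $4\pm\varepsilon$, so they pin down $H_N(p_t)$ only up to additive errors of order $\log C_1(\varepsilon)+O(\varepsilon)$, never the exact constant $\log\kappa$. Identifying the precise limit is a sharp result in its own right --- it is exactly the theorem of H. Li \cite{HLi} that the paper invokes --- and its proof requires convergence of the rescaled space to the tangent cone at infinity together with matching sharp heat kernel asymptotics, not merely the JLZ estimates. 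You name the right geometric ingredients (cone structure at infinity, asymptotic volume ratio), but the step that carries the full analytic weight of the theorem is left unproved; citing or reproving \cite{HLi} is unavoidable at this point.
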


The following result gives the rigidity theorem of the Stam logarithmic Sobolev inequality with the sharp constant 
on non-collapsing RCD$(0, N)$ space. 
 Recall that an RCD$(0, N)$ space $(X, d, \mu)$ is called non-collapsing if for any $x\in X$, it holds (see \cite{BGZ, Gigli2016, Honda20})
\begin{equation}\label{Noncollapsing}
	\lim\limits_{r\rightarrow 0} \frac{\mu(B(x,r))}{\omega_N r^N}=1.
\end{equation}

\begin{theorem}\label{Rigidity}
	Let $(X, d, \mu)$ be a non-collapsing  RCD$(0,N)$ space and the Stam logarithmic Sobolev inequality \eqref{StamLSIA}
holds with the sharp constant $\gamma_N=2\pi e N$ (i.e., $\kappa=1$ in Theorem \ref{StamLSIA})  as on the $N$-dimensional Euclidean space. Then $(X, d, \mu)$ must be one of the rigidity models in Theorem \ref{WnNK=0}. 
%
%
\end{theorem}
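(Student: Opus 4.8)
The plan is to show that under the stated hypotheses the $W$-entropy of the heat kernel is \emph{constant} in $t$, so that its derivative vanishes identically, and then to invoke the rigidity part of Theorem~\ref{WnNK=0} (equivalently the Kuwada--Li rigidity, Theorem~\ref{KLRigidity}) to conclude that $(X,d,\mu)$ is one of the listed models. Two numerical facts are first read off from the hypotheses: non-collapsing gives the pointwise density $\Theta(x_0):=\lim_{r\to0}\mu(B(x_0,r))/(\omega_N r^N)=1$ for every $x_0$, while the sharpness $\gamma_N=2\pi eN$ forces the asymptotic volume ratio $\kappa=\lim_{r\to\infty}\mu(B(x_0,r))/(\omega_N r^N)=1$, since $\gamma_N=2\pi eN\kappa^{2/N}=2\pi eN$ is equivalent to $\kappa=1$ (consistent with the Bishop--Gromov bound $\kappa\le1$ valid on non-collapsing RCD$(0,N)$ spaces).

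First I would record the elementary identity, valid for any positive heat flow solution $u(t)$ with $H(u)=-\int_X u\log u\,d\mu$ and $I(u)=\tfrac{d}{dt}H(u)$ (the first-order dissipation of Theorem~\ref{1HH2}),
\begin{equation*}
W_N(u(t))=H(u(t))+t\,I(u(t))-N-\tfrac{N}{2}\log(4\pi t),
\end{equation*}
which follows directly from $W_N=\tfrac{d}{dt}(tH_N)$ and $H_N=H-\tfrac N2(1+\log(4\pi t))$. Rewriting the sharp Stam inequality \eqref{StamLSIA} in the equivalent form $H(u)\ge\tfrac N2\log(\gamma_N/I(u))$ with $\gamma_N=2\pi eN$ and substituting, I obtain
\begin{equation*}
W_N(u(t))\ \ge\ \tfrac N2\log\tfrac{2\pi eN}{I}+tI-N-\tfrac N2\log(4\pi t).
\end{equation*}
The right-hand side, viewed as a function of $I>0$, is minimized at $I=N/(2t)$, where a direct computation shows it equals $0$. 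Hence the sharp Stam inequality yields $W_N(u(t))\ge0$ for all $t>0$, for every positive solution, in particular for the heat kernel $u(t)=p_t(x_0,\cdot)$.

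Next I would pin the value as $t\to0$. Using the short-time heat-kernel entropy expansion $H(p_t(x_0,\cdot))=\tfrac N2\log(4\pi e t)+\log\Theta(x_0)+o(1)$ together with $t\,I(p_t(x_0,\cdot))=\tfrac N2+o(1)$, the identity above gives $\lim_{t\to0^+}W_N(p_t(x_0,\cdot))=\log\Theta(x_0)=0$ by non-collapsing. Since $W_N(u(t))$ is non-increasing in $t$ on RCD$(0,N)$ spaces (Theorem~\ref{KLRigidity}; cf.\ Theorem~\ref{WnNK=0}), monotonicity forces $W_N(u(t))\le\lim_{s\to0^+}W_N(u(s))=0$ for every $t>0$. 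Combined with the lower bound $W_N\ge0$ from the previous step, this squeezes $W_N(p_t(x_0,\cdot))\equiv0$, hence $\tfrac{d}{dt}W_N\equiv0$; as a cross-check, the long-time expansion gives $\lim_{t\to\infty}W_N=\log\kappa=0$ as well. With the right upper $t$-derivative of $W_N$ equal to $0$, the rigidity statements of Theorem~\ref{WnNK=0} and Theorem~\ref{KLRigidity} apply and identify $(X,d,\mu)$ with one of the models listed there.

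The main obstacle is the two entropy asymptotics — the short-time expansion $H(p_t)=\tfrac N2\log(4\pi e t)+\log\Theta(x_0)+o(1)$ and its long-time analogue with $\log\kappa$ — on RCD$(0,N)$ spaces, where no smooth structure is available. I expect these to rest on the two-sided Gaussian heat-kernel bounds of \cite{JLZ} quoted after Theorem~\ref{1HH2}, on Varadhan-type short-time logarithmic asymptotics, and on the Bishop--Gromov monotonicity that makes $\Theta(x_0)$ and $\kappa$ well defined and equal to the local and asymptotic volume densities. Everything else — the algebraic identity, the one-variable minimization giving $W_N\ge0$, the monotonicity of $W_N$, and the final classification — is either elementary or already available from Theorems~\ref{WnNK=0}, \ref{KLRigidity} and \ref{StamLSIA}.
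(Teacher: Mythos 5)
Your proposal takes a genuinely different route from the paper, and it is worth recording the contrast before pointing out the gap. The paper never touches heat-kernel asymptotics: it deduces from the sharpness result of Balogh--Krist\'aly--Tripaldi \cite{BKT} that the hypothesis forces $\kappa\geq 1$, combines this with Bishop--Gromov monotonicity and the non-collapsing normalization $\Theta(x)=1$ to pinch $\mu(B(x,r))=\omega_N r^N$ for every $x\in X$ and $r>0$, and then concludes by the volume-cone-to-metric-cone rigidity theorem of De Philippis--Gigli \cite{Gigli2016}. Your route instead squeezes the $W$-entropy: the lower bound $W_N(u(t))\geq 0$, which you correctly derive from the sharp Stam inequality by the one-variable minimization over $I$ (this step is fine, and your algebraic identity $W_N=H+tI-N-\tfrac{N}{2}\log(4\pi t)$ is justified by Theorem \ref{1HH2}), together with monotonicity of $W_N$ and a claimed short-time limit $\lim_{t\to 0^+}W_N(p_t(x_0,\cdot))=0$, forces $W_N\equiv 0$, after which Theorem \ref{KLRigidity} applies.

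The genuine gap is precisely that short-time endpoint. The expansion $H(p_t)=\tfrac{N}{2}\log(4\pi e t)+\log\Theta(x_0)+o(1)$ on an RCD$(0,N)$ space is nowhere established in the paper, and it does not follow from the Gaussian bounds of \cite{JLZ} that you invoke: those bounds, together with $\int_X d^2(x_0,y)p_t(x_0,y)\,d\mu(y)=O(t)$ and Bishop--Gromov, only give $H(p_t)=\tfrac{N}{2}\log(4\pi e t)+O(1)$, i.e.\ boundedness of $H_N(p_t)$, not convergence to $\log\Theta(x_0)$; upgrading $O(1)$ to the exact constant is a nontrivial Varadhan-type statement on a non-smooth space, which you flag as the main obstacle but do not prove. (A small simplification: the second asymptotic $tI\to N/2$ can be dispensed with, since Theorem \ref{Ibound} gives $tI\leq N/2$, hence $W_N\leq H_N$, so only $\limsup_{t\to 0}H_N(p_t)\leq 0$ is really needed --- but that is exactly the unproven piece.) Without this endpoint the squeeze collapses: what remains available from the paper is the long-time limit of H.~Li \cite{HLi}, which together with $tI\leq N/2$ and $W_N\geq 0$ gives $\lim_{t\to\infty}W_N(p_t)=0$; but a non-negative, non-increasing function tending to $0$ at infinity need not vanish at any finite time, so no vanishing derivative at a finite $t_*$ and hence no rigidity follows. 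So the long-time asymptotic cannot substitute for the missing short-time one, and your proof is incomplete as written; completing it along your lines would require proving the short-time entropy expansion for heat kernels on non-collapsed RCD$(0,N)$ spaces, a substantial result in its own right, whereas the paper's volume-comparison argument bypasses the issue entirely.
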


\begin{remark}
	 We would like to point out that, after we proved the Shannon entropic isometry inequality \eqref{stamLogS} which is  equivalent to the Stam type Log-Sobolev inequality \eqref{NXY5}. We find the recent published paper by Balogh et al \cite{BK, BKT}, in which the authors proved very similar results  as the Stam type Log-Sobolev inequality. Our work is independent of \cite{BK, BKT}.  Our proof uses the argument of Li-Li  \cite{LiLi2024TMJ}  and is different  from \cite{BK, BKT}. 

\end{remark}

\section{The entropy dissipation formulas on RCD$(K,N)$ spaces}
In this section, we prove the entropy dissipation equalities and  inequality on 
 RCD$(K, N)$ spaces. In the case $K=0$,  the first order entropy dissipation equality and the second order entropy dissipation inequality  have been  essentially proved in \cite{EKS2015} even though the authors did not precisely formulated the integrability conditions for the validity of these results. We will modify the proof  in \cite{EKS2015} and prove the entropy dissipation equalities and  inequality  under the integrability conditions  precisely formulated in Theorem \ref{1HH2}. 
 
 To prove our results, we need following Li-Yau Harnack inequality on RCD spaces. 
\begin{theorem}\cite{zhuxiping}\label{zhulemma}
	 Let $K \geq 0$ and $N \in [1,\infty )$, and let $(X,d,\mu)$ be a metric measure space
satisfying RCD$^\ast(-K, N)$. Let $T_\ast \in (0, \infty ] $ and $u(x, t)$ is a positive solution of the heat equation on $X \times\left(0, T_*\right)$. Then, for almost all $T \in\left(0, T_*\right)$, the following gradient estimate holds
$$
\sup _{x \in X}\left(|\nabla f|^2-\alpha \cdot \frac{\partial}{\partial t} f\right)(x, T) \leq\left(1+\frac{K T}{2(\alpha-1)}\right) \cdot \frac{N \alpha^2}{2 T}
$$
for any $\alpha>1$, where $f=\log u$.
\end{theorem}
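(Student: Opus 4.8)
The plan is to establish the Li–Yau gradient estimate by combining the Bochner inequality available on $\mathrm{RCD}^*(-K, N)$ spaces with a maximum-principle argument applied to a suitable auxiliary quantity. First I would set $f = \log u$ and introduce the Li–Yau quantity
\begin{equation*}
  F := t\left(|\nabla f|^2 - \alpha\, \partial_t f\right),
\end{equation*}
whose supremum we wish to control. Using the heat equation $\partial_t u = \Delta u$, the chain rule \eqref{nabla}, and the relation $\partial_t f = \Delta f + |\nabla f|^2$, I would derive an evolution equation for $\partial_t f$ and for $|\nabla f|^2$, and hence for the combination $w := |\nabla f|^2 - \alpha\,\partial_t f$. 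The computation of $\left(\Delta - \partial_t\right) w$ is the crucial algebraic step: it produces a term $2\,\Gamma_2(f, f)$ (or, in the $\mathrm{RCD}$ language, the term coming from $\tfrac12\Delta|\nabla f|^2 - \langle\nabla f, \nabla\Delta f\rangle$) together with lower-order terms.

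Next I would invoke the weak Bochner inequality \eqref{BE}, which on an $\mathrm{RCD}^*(-K, N)$ space reads
\begin{equation*}
  \tfrac12 \Delta |\nabla f|^2 \geq \langle\nabla f, \nabla\Delta f\rangle - K|\nabla f|^2 + \tfrac{1}{N}(\Delta f)^2,
\end{equation*}
in order to bound $\Gamma_2(f,f)$ from below by $\tfrac1N(\Delta f)^2 - K|\nabla f|^2$. Substituting this into the evolution inequality for $w$ and then applying the elementary inequality that bounds $(\Delta f)^2$ in terms of $(\partial_t f - |\nabla f|^2)^2$, I would complete the square so as to produce a closed differential inequality of the form $\left(\Delta - \partial_t\right)F \geq (\text{good quadratic terms}) - C$, where $C$ involves $N$, $K$, $\alpha$ and $t$. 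The parameter $\alpha > 1$ is essential here: it leaves enough room in the square-completion to absorb the cross terms and the negative curvature contribution $-K|\nabla f|^2$, at the cost of the factor $\tfrac{KT}{2(\alpha-1)}$ in the final bound.

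I would then run a maximum-principle argument at the point (and time) where $F$ attains its maximum on $X \times (0, T]$. At an interior spatial maximum one has $\nabla F = 0$ and $\Delta F \leq 0$, while at a maximum in time $\partial_t F \geq 0$; feeding these into the differential inequality yields a pointwise algebraic inequality at the maximum point, from which the bound $F \leq \left(1 + \tfrac{KT}{2(\alpha-1)}\right)\tfrac{N\alpha^2}{2}$ follows after dividing by $T$. Because the conclusion is stated for almost all $T$ and as an essential supremum, I would carry this out using the self-improved Bochner inequality and test-function formulation rather than a classical pointwise maximum principle.

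The main obstacle is precisely that $\mathrm{RCD}$ spaces lack a smooth structure, so the maximum-principle step cannot be applied naively: $F$ need not be smooth, the Hessian and $\Gamma_2$ are only measure-valued or defined in the sense of distributions, and the attainment of a maximum is not guaranteed. The rigorous route is to replace the pointwise maximum principle by its weak, integral counterpart—testing the differential inequality against a sequence of localized nonnegative cutoff functions, using the self-improvement of the Bochner inequality on $\mathrm{RCD}$ spaces to control the Hessian term, and passing to the $\limsup$ to recover the estimate for a.e.\ $T$. Since Theorem \ref{zhulemma} is quoted from \cite{zhuxiping}, I would at this point cite that work for the technical parabolic maximum-principle machinery on $\mathrm{RCD}$ spaces rather than reproving it, and emphasize that all the ingredients—the weak Bochner inequality \eqref{BE}, the chain rule \eqref{nabla}, and the square-completion driven by $\alpha > 1$—are exactly those assembled above.
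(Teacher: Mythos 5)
The paper gives no proof of this theorem: it is quoted directly from Zhang--Zhu \cite{zhuxiping}, so the ``paper's own proof'' is the citation itself, and your proposal is consistent with that. Your sketch correctly reconstructs the strategy of the cited work --- the Li--Yau quantity $F=t\left(|\nabla f|^2-\alpha\,\partial_t f\right)$, the identity $\partial_t f=\Delta f+|\nabla f|^2$, the weak Bochner inequality on RCD$^*(-K,N)$ giving $\Gamma_2(f,f)\geq \tfrac1N(\Delta f)^2-K|\nabla f|^2$, the square-completion enabled by $\alpha>1$, and the replacement of the pointwise maximum principle by a weak, test-function argument --- and your decision to defer that last technical step to \cite{zhuxiping} matches exactly how the theorem is used in this paper.
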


In order to apply the weak Bochner inequality,  we regularize $\mu$ as described in \cite{EKS2015} through a series of steps in several steps.
		First,  put $P_\delta u=\tilde{u_\delta}$, let
		 $$u_\delta=c_\delta h^\delta(\tilde{u_\delta}\wedge \delta^{-1}),$$ where 
		 $c_\delta$ is a normalizing constant and 
		 $$h^\varepsilon u=\int^\infty_0\frac{1}{\varepsilon}\eta\left(\frac{t}{\varepsilon}\right)P_t u dt$$ with a non-negative 
		 kernel $\eta \in C_c^\infty (0,\infty)$ satisfying $\int^\infty_0 \eta(t)dt=1$, 
		 then put $P_tu_\delta=u_t^\delta $,  we also regularize entropy functional as follow. 
		 
		 Define $e_\varepsilon :[0,\infty)\rightarrow \mathbb{R}$ with $e'_\varepsilon(r)=\log(\varepsilon+r)+1$ and
		  $e_\varepsilon(0)=0$. Set $$H_\varepsilon(u)=-\int e_\varepsilon(u)d\mu.$$ 
		  Note that $u_t^\delta \in L^\infty(X,\mu) \bigcap D(\Delta)$ and
		   $\Delta u_t^\delta \in L^\infty(X,\mu) \bigcap D(\Delta)$. Let
		    $$g_t^{\varepsilon,\delta}=e'_\varepsilon(u_t^\delta)-1-\log(\varepsilon),$$ 
			we obtain from chain rule that $g_t^{\varepsilon,\delta} ,|\nabla g_t^{\varepsilon,\delta}|,\Delta g_t^{\varepsilon,\delta}\in L^\infty(X,\mu) \bigcap D(\Delta)$.
			
			Note that
			
			\begin{equation*}
\nabla g_t^{\varepsilon,\delta} =e''_\varepsilon (u_t^\delta)\nabla u_t^\delta ={\nabla u_t^\delta\over u_t^\delta+\varepsilon},
\end{equation*}
		    and 
\begin{eqnarray*}
\Delta g_t^{\varepsilon,\delta} =e''_\varepsilon (u_t^\delta) \Delta u_t^\delta+e'''_\varepsilon (u_t^\delta)|\nabla u_t^\delta|_w^2={\Delta u_t^\delta \over u_t^\delta+\varepsilon}-{|\nabla u_t^\delta|_w^2\over (u_t^\delta+\varepsilon)^2}.
\end{eqnarray*}
		    
Now we are ready to prove Theorem \ref{1HH2}. We will divide our proof into three steps.
		    


%
%

\medskip

\noindent{\bf Step 1: The continuity of entropy in $\varepsilon$ and $\delta$}. 
\medskip
This follows the integrability condition \eqref{C0} and the Lebesgue dominated convergence theorem. 

\medskip

\noindent{\bf Step 2: The first order entropy dissipation formula}. 
By \cite{EKS2015}, it holds
		\begin{equation}\label{d1t1H}
		\begin{aligned}
			\frac{d}{dt}H_\varepsilon(u_t^\delta)&=-\int_X e'_\varepsilon(u_t^\delta)\partial_t u_t^\delta d\mu=-\int_X e'_\varepsilon(u_t^\delta)\Delta u_t^\delta d\mu\\
			&=-\int_X \Delta e'_\varepsilon(u_t^\delta) u_t^\delta d\mu=\int_X \langle \nabla e'_{\varepsilon}(u_t^\delta), \nabla u_t^\delta \rangle d\mu\\
			 &=\int_X \langle\nabla g_t^{\varepsilon,\delta}, \nabla u_t^\delta \rangle d\mu=  \int_X \frac{|\nabla u_t^\delta |^2}{u_t^\delta+\varepsilon} d\mu.
		\end{aligned}
	\end{equation}
	Note that
	\begin{equation*}
	\frac{|\nabla u_t^\delta |^2}{u_t^\delta+\varepsilon}\leq \sup_{\delta>0} \frac{|\nabla u_t^\delta |^2}{u_t^\delta}.
\end{equation*}
By the Lebesgue Dominated Convergence Theorem, if \eqref{C1} holds,  we have
\begin{equation*}
\begin{aligned}
	\lim_{\delta\rightarrow 0}\lim_{\varepsilon\rightarrow 0}\int_X \frac{|\nabla u_t^\delta |^2}{u_t^\delta+\varepsilon} d\mu
	&=\int_X \lim_{\delta\rightarrow 0}\lim_{\varepsilon\rightarrow 0}\frac{|\nabla u_t^\delta |^2}{u_t^\delta+\varepsilon} d\mu=\int_X \frac{|\nabla u_t |^2}{u_t} d\mu\\
	&=\int_X \langle \nabla \log u_t,  \nabla u_t\rangle d\mu=-\int_X \log u_t
\Delta u_t d\mu\\
&=-\int_X \Delta \log u_t u_td\mu.
\end{aligned}
\end{equation*}
Therefore 
\begin{equation*}
	\frac{d}{dt}H(u_t)=-\int_X \log u_t
\Delta u_t d\mu=-\int_X \Delta \log u_t u_td\mu.
\end{equation*}

\medskip

\noindent{\bf Step 3: The second order entropy dissipation formula and inequality}. We have
 	\begin{equation}\label{d2t2H1}
		\begin{split}
			\frac{d^2}{dt^2}H_\varepsilon(u_t^\delta)
		&=-{d\over dt}\int_X e'_\varepsilon(u_t^\delta)\Delta u_t^\delta d\mu\\
			&=-\int_X \Big[e''_\varepsilon(u_t^\delta)(\Delta u_t^\delta)^2+e'_\varepsilon(u_t^\delta)\Delta^2u_t^\delta \Big]d\mu\\
			&=-\int_X e''_\varepsilon(u_t^\delta)(\Delta u_t^\delta)^2d\mu -\int_X\Delta e'_\varepsilon(u_t^\delta)\Delta u_t^\delta d\mu\\
			& \ \  ({\rm using\ the\ fact}\ \Delta e'_\varepsilon(u_t^\delta)=e''_\varepsilon(u_t^\delta)\Delta u_t^\delta+e'''_\varepsilon(u_t^\delta)|\nabla u_t^\delta |_w^2\\
			&=-2\int_X e''_\varepsilon(u_t^\delta)(\Delta u_t^\delta)^2d\mu-\int_X e'''_\varepsilon(u_t^\delta)|\nabla u_t^\delta |_w^2\Delta u_t^\delta d\mu\\
			&=-2\int_X e''_\varepsilon(u_t^\delta)(\Delta u_t^\delta)^2d\mu-2\int_X e'''_\varepsilon(u_t^\delta)|\nabla u_t^\delta |_w^2\Delta u_t^\delta d\mu+\int_X e'''_\varepsilon(u_t^\delta)|\nabla u_t^\delta |_w^2\Delta u_t^\delta d\mu \\
			& \ \  ({\rm using\ again\ the\ fact}\ \Delta e'_\varepsilon(u_t^\delta )=e''_\varepsilon(u_t^\delta)\Delta u_t^\delta+e'''_\varepsilon(u_t^\delta)|\nabla u_t^\delta |_w^2\\
			&=-2\int_X \Delta e'_\varepsilon(u_t^\delta)\Delta u_t^\delta d\mu+ \int_X e'''_\varepsilon(u_t^\delta)|\nabla u_t^\delta |_w^2\Delta u_t^\delta d\mu\\
			&=\int_X \Big[2 \<\nabla u_t^\delta,\nabla \Delta e'_\varepsilon(u_t^\delta) \>+e'''_\varepsilon(u_t^\delta)|\nabla u_t^\delta |_w^2\Delta u_t^\delta\Big] d\mu\\
			& \ \  (\ \ {\rm using\ the\ fact}\ e'''_{\varepsilon}(r)=-(e''_{\varepsilon}(r)^2)\ \ \ )\\
			&=\int_X \Big[2 (u_t^\delta+\varepsilon)\<\nabla \log(u_t^\delta+\varepsilon),\nabla \Delta e'_\varepsilon(u_t^\delta)\>d\mu-(e''_{\varepsilon}(u_t^\delta))^2 |\nabla u_t^\delta |_w^2\Delta u_t^\delta\Big] d\mu \\
			&=\int_X \Big[2 (u_t^\delta+\varepsilon)\<\nabla e'_\varepsilon(u_t^\delta),\nabla \Delta e'_\varepsilon(u_t^\delta)\>d\mu- |\nabla e'_\varepsilon(u_t^\delta)|_w^2\Delta u_t^\delta\Big] d\mu\\
			&= \int_X\Big[2 (u_t^\delta+\varepsilon)\<\nabla g_t^{\varepsilon,\delta},\nabla \Delta g_t^{\varepsilon,\delta} \>-|\nabla g_t^{\varepsilon,\delta} |^2_w\Delta (u_t^\delta+\varepsilon )\Big] d\mu\\
			&=\int \left[ 2\<\nabla (u_t^\delta+\varepsilon), \nabla \Delta g_t^{\varepsilon,\delta} \>-|\nabla g_t^{\varepsilon,\delta} |^2_w \Delta(u_t^\delta+\varepsilon )\right]d\mu\\
			&=-\int \left[ 2\Delta(u_t^\delta+\varepsilon) \Delta g_t^{\varepsilon,\delta}+|\nabla g_t^{\varepsilon,\delta} |^2_w \Delta(u_t^\delta+\varepsilon )\right]d\mu.
		\end{split}		
	\end{equation}
	Note that
	\begin{equation*}
	\begin{aligned}
	\Big|\Delta (u_t^\delta +\varepsilon ) \Delta g_t^{\varepsilon,\delta}\Big|
	=|\Delta u_t^\delta|\left| \frac{\Delta u_t^\delta}{u_t^\delta} -\frac{|\nabla u_t^\delta|^2}{ (u_t^\delta)^2}\right|\leq \frac{(\Delta u_t^\delta )^2}{u_t^\delta}+\frac{|\nabla u_t^\delta |^2|\Delta u_t^\delta |}{(u_t^\delta )^2}.
	\end{aligned}
	\end{equation*}
By the Li-Yau Harnack inequality in Theorem \ref{zhulemma}, we have
\begin{equation}\label{DDDD}
\begin{aligned}
\frac{|\nabla u_t^\delta |^2 |\Delta u_t^\delta |}{(u_t^\delta)^2}&\leq \left[ \alpha  \frac{\Delta u_t^\delta}{u_t^\delta}+C_{K,N,T,\alpha}\right] |\Delta u_t^\delta | \\
&\leq \alpha \frac{|\Delta u_t^\delta|^2}{u_t^\delta}+C_{K,N,T,\alpha} \Big(\frac{|\Delta u_t^\delta|^2}{u_t^\delta}+u_t^\delta \Big)\\
&=(\alpha+C_{K,N,T,\alpha})\frac{|\Delta u_t^\delta|^2}{u_t^\delta}+C_{K,N,T,\alpha}u_t^\delta.
\end{aligned}
\end{equation}

On the other hand
\begin{equation}\label{EEEE}
	\begin{aligned}
	|\Delta u_t^\delta ||\nabla g_t^{\varepsilon,\delta}|^2
	=|\Delta u_t^\delta| \frac{|\nabla u_t^\delta|^2}{ (u_t^\delta+\varepsilon)^2}\leq\frac{|\nabla u_t^\delta|^2 |\Delta u_t^\delta| }{ (u_t^\delta)^2}.	\end{aligned}
	\end{equation}
	By \eqref{DDDD} and \eqref{EEEE}, under the integrability condition \eqref{C4}, 
we have 	
	\begin{equation}
	\int_X \sup\limits_{\varepsilon, \delta}[|\Delta (u_t^\delta+\varepsilon ) \Delta g_t^{\varepsilon,\delta}|+|\nabla g_t^{\varepsilon,\delta} |^2_w| |\Delta(u_t^\delta+\varepsilon )|]d\mu<+\infty.
	\end{equation}
 By the Lebesgue dominated convergence theorem, we  have
	\begin{equation*}\label{HHHHH1}
		\begin{split}
			\lim_{\delta\rightarrow 0}\lim_{\varepsilon\rightarrow 0}\frac{d^2}{dt^2}H_\varepsilon(u_t^\delta)
						&=-\lim_{\delta\rightarrow 0}\lim_{\varepsilon\rightarrow 0}\int_X\left[ 2\Delta(u_t^\delta+\varepsilon) \Delta g_t^{\varepsilon,\delta}+|\nabla g_t^{\varepsilon,\delta} |^2_w \Delta(u_t^\delta+\varepsilon )\right]d\mu\\
						&=-\int_X \lim_{\delta\rightarrow 0}\lim_{\varepsilon\rightarrow 0}\left[ 2\Delta(u_t^\delta+\varepsilon) \Delta g_t^{\varepsilon,\delta}+|\nabla g_t^{\varepsilon,\delta} |^2_w \Delta(u_t^\delta+\varepsilon )\right]d\mu\\
&=-\int_X \left[ 2\Delta u_t \Delta \log u_t+|\nabla \log u_t |^2_w \Delta u_t\right]d\mu\\
&=-\int_X \left[ 2\Delta \log u_t+|\nabla \log u_t |^2_w \right]\Delta u_t d\mu\\
&(\ {\rm by\ the\ integration\ by\ parts\ formula\ and\ the\ duality\ of}\ {\bf \Delta})\\
&= \int_X \left[2\<\nabla \Delta \log u_t, \nabla \log u_t\>-{\bf \Delta} |\nabla \log u_t |^2_w \right]u_td\mu\\
&=-\int_X {\bf \Gamma_2}(\nabla \log u_t, \nabla \log u_t)u_td\mu.					
								\end{split}		
	\end{equation*}
By the continuity of $H_{\varepsilon, \delta}(u_t^\delta)$ and ${d\over dt}H_{\varepsilon, \delta}(u_t^\delta)$ in $\varepsilon$ and $\delta$, we have
\begin{equation*}
	\frac{d^2}{dt^2}H(u_t)=-2\int_X {\bf \Gamma_2}(\nabla \log u_t, \nabla\log u_t)u_td\mu.
	\end{equation*}
	 \hfill $\square$


The following result gives two equivalent representations of the Fisher information. 

\begin{lemma}\label{Fisher}
	If $u$ satisfies the condition $(\ref{C1})$ in Theorem \ref{1HH2}, we have
	 \begin{equation}
	 	\int_X u|\nabla \log u |_w^2 d\mu=-\int_X u\Delta\log u  d\mu.
	 \end{equation}
	 \begin{proof}
	 	In order to apply the chain rule on RCD-spaces, we regularize entropy functional as before.
		 Define $u^\delta$ as before and $e_\varepsilon :[0,\infty)\rightarrow \mathbb{R}$ with $e'_\varepsilon(r)=\log(\varepsilon+r)+1$, $e_\varepsilon(0)=0$. By the chain rule, we have
	 	\begin{equation*}
	 		\Delta e'_\varepsilon(u^\delta )=e''_\varepsilon(u^\delta)\Delta u^\delta+e'''_\varepsilon(u^\delta)|\nabla u^\delta |_w^2.
	 	\end{equation*}
	 	 Note that $e''_\varepsilon(r)=(r+\varepsilon)^{-1}$ and $e_\varepsilon'''(r) =-(e''_\varepsilon(r))^2$. Then
	 	\begin{equation*}
	 		\begin{split}
	 			\int_X \Delta e'_\varepsilon(u^\delta)u^\delta d\mu &=\int_X e''_\varepsilon(u^\delta)\Delta u^\delta ud\mu+\int_X e'''_\varepsilon(u^\delta)|\nabla u^\delta |_w^2ud\mu\\
	 			&=\int_X \frac{u^\delta}{u^\delta+\varepsilon}\Delta u^\delta d\mu -\int_X \frac{1}{(u^\delta+\varepsilon)^2}|\nabla u^\delta |_w^2u^\delta d\mu\\
	 			&=\int_X \frac{u^\delta}{u^\delta+\varepsilon}\Delta u^\delta d\mu -\int_X u^\delta|\nabla e'_\epsilon(u^\delta) |_w^2 d\mu .
	 		\end{split}
	 	\end{equation*}
		 Taking  $\varepsilon\rightarrow 0$ and then $\delta\rightarrow 0$, we complete the proof.
	 \end{proof}
\end{lemma}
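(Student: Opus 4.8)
The identity to be proved is nothing but the integration-by-parts formula
$\int_X \langle \nabla u, \nabla \log u\rangle\, d\mu = -\int_X u\,\Delta \log u\, d\mu$
combined with the pointwise chain rule $\langle \nabla u, \nabla \log u\rangle = u|\nabla \log u|_w^2$. On a smooth manifold this is immediate, so the entire issue is that $\log u$ is not an admissible test function on $(X,d,\mu)$: it is singular where $u$ is small, and $\Delta \log u$ exists only in a weak/distributional sense. The plan is therefore to reuse the truncation–mollification scheme of Theorem \ref{1HH2}, replacing $u$ by the regularized density $u^\delta\in D(\Delta)\cap L^\infty$ with $\Delta u^\delta$ again bounded and in $D(\Delta)$, and replacing the logarithm by the smooth approximation $e'_\varepsilon(r)=\log(\varepsilon+r)+1$. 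For fixed $\delta$ the derivatives $e''_\varepsilon(r)=(r+\varepsilon)^{-1}$ and $e'''_\varepsilon(r)=-(r+\varepsilon)^{-2}=-(e''_\varepsilon(r))^2$ are bounded along $u^\delta$, so the chain and Leibniz rules \eqref{nabla} apply legitimately to $e'_\varepsilon(u^\delta)$.

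The core computation is to apply the chain rule for the Laplacian,
\[
\Delta e'_\varepsilon(u^\delta)=e''_\varepsilon(u^\delta)\Delta u^\delta+e'''_\varepsilon(u^\delta)|\nabla u^\delta|_w^2,
\]
multiply by $u^\delta$, integrate against $\mu$, and rewrite the last term via $\nabla e'_\varepsilon(u^\delta)=e''_\varepsilon(u^\delta)\nabla u^\delta$, so that $u^\delta e'''_\varepsilon(u^\delta)|\nabla u^\delta|_w^2=-u^\delta|\nabla e'_\varepsilon(u^\delta)|_w^2$. This produces the key identity
\[
\int_X \Delta e'_\varepsilon(u^\delta)\,u^\delta\, d\mu=\int_X \frac{u^\delta}{u^\delta+\varepsilon}\,\Delta u^\delta\, d\mu-\int_X u^\delta|\nabla e'_\varepsilon(u^\delta)|_w^2\, d\mu.
\]
I would then let $\varepsilon\to 0$ and afterwards $\delta\to 0$. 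On the left, $e'_\varepsilon(u^\delta)\to \log u^\delta+1$, so the left side tends to $\int_X u^\delta\Delta\log u^\delta\, d\mu$ and then to $\int_X u\,\Delta\log u\, d\mu$; the second integral on the right increases to $\int_X u^\delta|\nabla\log u^\delta|_w^2\, d\mu$, which is exactly the quantity controlled by hypothesis \eqref{C1}, so dominated convergence applies there.

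The step I expect to be the main obstacle is the first term on the right, $\int_X \frac{u^\delta}{u^\delta+\varepsilon}\Delta u^\delta\, d\mu$, which formally tends to $\int_X \Delta u^\delta\, d\mu$. On a possibly non-compact, infinite-measure space one cannot integrate by parts against the constant $1$ (it is not in $L^2$), so this term must be handled by a structural fact rather than by naive cancellation: I would invoke mass conservation of the heat flow on RCD spaces (stochastic completeness), which gives $\int_X \Delta u^\delta\, d\mu=\tfrac{d}{dt}\int_X P_t u_\delta\, d\mu=0$. Writing $\int_X \frac{u^\delta}{u^\delta+\varepsilon}\Delta u^\delta\, d\mu=-\int_X \frac{\varepsilon}{u^\delta+\varepsilon}\Delta u^\delta\, d\mu$ and dominating the integrand by $|\Delta u^\delta|$ (integrable for the fixed regularization $u^\delta$) then shows this term vanishes as $\varepsilon\to 0$. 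Combining the two limits yields $\int_X u\,\Delta\log u\, d\mu=-\int_X u|\nabla\log u|_w^2\, d\mu$, which is the asserted equality. The delicate points are thus precisely the vanishing of $\int_X \Delta u^\delta\, d\mu$ and the double dominated-convergence passage, both of which rely on the a priori integrability \eqref{C1} together with stochastic completeness.
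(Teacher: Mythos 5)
Your proposal follows essentially the same route as the paper's proof: the same regularization $u^\delta$ and $e_\varepsilon$, the same chain-rule identity
\begin{equation*}
\int_X \Delta e'_\varepsilon(u^\delta)\,u^\delta\, d\mu=\int_X \frac{u^\delta}{u^\delta+\varepsilon}\,\Delta u^\delta\, d\mu-\int_X u^\delta\,|\nabla e'_\varepsilon(u^\delta)|_w^2\, d\mu,
\end{equation*}
and the same iterated limit $\varepsilon\to 0$ followed by $\delta\to 0$. The only difference is that where the paper simply says ``taking $\varepsilon\to 0$ and then $\delta\to 0$,'' you make explicit the ingredient this step silently requires, namely that $\int_X \frac{u^\delta}{u^\delta+\varepsilon}\Delta u^\delta\, d\mu$ tends to $\int_X \Delta u^\delta\, d\mu=0$, which you correctly justify via mass conservation (stochastic completeness) of the heat flow on RCD spaces together with a domination by $|\Delta u^\delta|$.
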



We now use the first order entropy dissipation equality and the second order entropy dissipation inequality in Theorem \ref{1HH2} and Lemma \ref{Fisher} to prove the  Ricatti type entropy differential inequality (EDI) for the Shannon entropy of the heat semigroup on RCD$(K,N)$ spaces. It will play a key role in the proofs of the main results of this paper. In the setting of complete Riemannian manifolds with CD$(K, N)$-condition, it is due to Li-Li \cite{LiLi2015PJM,LiLi2018SCM}.

\begin{theorem}\label{EDIKM}
	Let $(X, d, \mu)$ be an 
 RCD$(K, N)$ space and $u$ be a solution to the heat equation $\partial_t u=\Delta u$.  Let $H(u)=-\int_X u \log u d\mu$ be the Shannon entropy.  
	 Then  the Entropy Differential Inequality (EDI$(K,N)$) holds 
	\begin{equation}\label{EDI}
		\frac{d^2}{dt^2}H(u)+\frac{2}{N}\Big(\frac{d}{dt}H(u)\Big)^2+2K\frac{d}{dt}H(u)\leq 0,
	\end{equation}
\end{theorem}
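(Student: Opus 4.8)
The plan is to derive the Entropy Differential Inequality \eqref{EDI} by directly combining the three ingredients already established: the first order entropy dissipation formula \eqref{H1}, the second order entropy dissipation inequality \eqref{H2}, and the two representations of the Fisher information from Lemma \ref{Fisher}. The key observation is that all three quantities in \eqref{EDI} can be rewritten in terms of a single integral expression involving $\Delta \log u$ and $|\nabla \log u|_w^2$, after which the inequality follows from a pointwise algebraic manipulation together with the RCD$(K,N)$ curvature bound already encoded in \eqref{H2}.

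First I would record from \eqref{H1} and Lemma \ref{Fisher} that
\begin{equation*}
\frac{d}{dt}H(u)=-\int_X u\,\Delta \log u\,d\mu=\int_X u\,|\nabla \log u|_w^2\,d\mu=I(u),
\end{equation*}
so that $\frac{d}{dt}H(u)$ equals the Fisher information $I(u)$. Next, from the second order dissipation inequality \eqref{H2} I have
\begin{equation*}
\frac{d^2}{dt^2}H(u)\leq -\frac{2}{N}\int_X u\,(\Delta \log u)^2\,d\mu-2K\int_X u\,|\nabla \log u|_w^2\,d\mu.
\end{equation*}
The second term on the right is precisely $-2K\,\frac{d}{dt}H(u)$ by the Fisher information identity, which already produces the $2K\frac{d}{dt}H(u)$ term in \eqref{EDI}. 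It therefore remains to control the first term, and this is where the dimensional factor $\frac{2}{N}(\frac{d}{dt}H(u))^2$ must appear.

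The crucial step is a Cauchy--Schwarz (or Jensen) argument: since $u\,d\mu$ is a probability measure, I would apply the Cauchy--Schwarz inequality to obtain
\begin{equation*}
\Big(\frac{d}{dt}H(u)\Big)^2=\Big(\int_X u\,\Delta \log u\,d\mu\Big)^2\leq \int_X u\,(\Delta \log u)^2\,d\mu,
\end{equation*}
using $\int_X u\,d\mu=1$. Multiplying by $-\frac{2}{N}$ and inserting into the bound for $\frac{d^2}{dt^2}H(u)$ gives
\begin{equation*}
\frac{d^2}{dt^2}H(u)\leq -\frac{2}{N}\Big(\frac{d}{dt}H(u)\Big)^2-2K\,\frac{d}{dt}H(u),
\end{equation*}
which rearranges to exactly \eqref{EDI}. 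I expect the main conceptual point — and the step most in need of care — to be the justification that $u\,d\mu$ is a probability measure so that the Cauchy--Schwarz reduction is valid, together with ensuring that all integrals are finite under the integrability hypotheses of Theorem \ref{1HH2}; these are guaranteed by the conservation of mass for the heat flow and conditions \eqref{C1} and \eqref{C4}, so the argument is essentially a clean assembly of the preceding results rather than a new estimate.
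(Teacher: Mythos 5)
Your proposal is correct and follows essentially the same route as the paper: substitute the first order formula \eqref{H1} and the second order inequality \eqref{H2} into \eqref{EDI}, use Lemma \ref{Fisher} to identify $\int_X u|\nabla\log u|_w^2\,d\mu$ with $-\int_X u\,\Delta\log u\,d\mu=\frac{d}{dt}H(u)$ so the $2K$-terms cancel, and finish with Cauchy--Schwarz against the probability measure $u\,d\mu$. The paper's proof is just a more compressed version of exactly this assembly.
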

\begin{proof}
Substituting \eqref{H1} and \eqref{H2} into \eqref{EDI}, we have
	\begin{equation*}
		\begin{split}
			&\frac{d^2}{dt^2}H(u)+\frac{2}{N}\Big(\frac{d}{dt}H(u)\Big)^2+2K\frac{d}{dt}H(u)\\
			\leq & \frac{2}{N}\Big(\int_X\Delta \log u u d\mu\Big)^2-\frac{2}{N} \int_X u\Big(\Delta \log u\Big)^2 d\mu\\
			\leq & 0,
		\end{split}
	\end{equation*}
	where the last inequality holds by application of the Cauchy-Schwarz inequality.

\end{proof}
\begin{remark} By S. Li-Li \cite{LiLi2024TMJ}, when $(X, d, \mu)=(M, g, \mu)$ is an $n$-dimensional complete Riemannian manifold with CD$(K, N)$-condition and bounded geometry condition, we have
\begin{align*}
	-\frac{1}{2}H''
	&=\int_X\Big[\|\nabla^2 \log u \|^2_{\operatorname{HS}}+\Ric(L)(\nabla \log u,\nabla \log u)\Big] ud\mu\\
	&=\int_X\Big[\frac{(L\log u)^2}{N}+\Ric_{N, n}(L)(\nabla \log u,\nabla \log u)+\left\|\nabla^2 \log u-\frac{\Delta\log u}{n}\right\|^2_{\operatorname{HS}}\Big]ud\mu\\
	& \hskip2cm +{N-n\over Nn}\int_M \Big[L\log u+{N\over N-n}\nabla V\cdot\nabla \log u\Big]^2ud\mu\\
	&=\frac{1}{N}\Big(\int_X|\nabla \log u |^2 ud\mu\Big)^2 +\frac{1}{N} \int_X \Big[L \log u-\int_Xu  L\log u d\mu \Big]^2ud\mu\\
	&\hskip1cm +\int_X\Big[\Ric_{N, n}(L)(\nabla \log u,\nabla \log u)+\left\|\nabla^2 \log u-\frac{\Delta\log u}{n}g\right\|^2_{\operatorname{HS}}\Big]ud\mu\\
	& \hskip2cm +{N-n\over Nn}\int_M \Big[L\log u+{N\over N-n}\nabla V\cdot\nabla \log u\Big]^2ud\mu.
\end{align*}
This yields
\begin{align*}
	\frac{1}{2}H''+\frac{{H'}^2}{N} &=
-\frac{1}{N} \int_X \Big[L \log u-\int_Xu  L\log u d\mu \Big]^2ud\mu-\int_X
\Ric_{N, n}(L) (\nabla \log u, \nabla \log u) ud\mu\\
&\hskip1cm -\int_M \left\|\nabla^2 \log u-\frac{\Delta\log u} {n}g\right\|^2_{\rm HS}ud\mu-{N-n\over Nn}
\int_M \Big[L\log u+{N\over N-n}\nabla V\cdot\nabla \log u\Big]^2ud\mu.
\end{align*}
Under the CD$(K, N)$-condition, we have
\begin{align*}
	\frac{1}{2}H''+\frac{{H'}^2}{N} &\leq -K\int_X|\nabla \log u|^2 ud\mu -\int_X \Big[\left\|\nabla^2 \log u-\frac{\Delta\log u}{n}\right\|^2_{\operatorname{HS}}\Big]ud\mu\\
	& \ \ \ -\frac{1}{N} \int_X\Big[L \log u-\int_X L\log u ud\mu \Big]^2ud\mu.
\end{align*}
Thus, on Riemannian manifold with CD$(K, N)$-condition and the bounded geometry condition, the Ricatti EDI reads 
\begin{align*}
	\frac{1}{2}H''+\frac{{H'}^2}{N} +KH'&\leq  -\int_X \Big[\left\|\nabla^2 \log u-\frac{\Delta\log u}{n}\right\|^2_{\operatorname{HS}}\Big]ud\mu-\frac{1}{N} \int_X\Big[L \log u-\int_X L\log u ud\mu \Big]^2ud\mu.
\end{align*}
\quad\quad

In general case of non-smooth  RCD metric measure space, we have
\begin{eqnarray*}
	\frac{1}{2}H''+ \frac{H'^2}{N}&=&-\int_X {\bf\Gamma}_2(\nabla \log u, \nabla \log u)ud\mu+{1\over N}\left(\int_X \Delta \log u ud\mu\right)^2\\
	&=&-\int_X {\bf\Gamma}_2(\nabla \log u, \nabla \log u)ud\mu+{1\over N}\int_X |\Delta \log u|^2 ud\mu
	-{1\over N}\int_X \left|\Delta \log u-\int_X \Delta\log u ud
	\mu\right|^2 ud\mu\\
	&=&\int_X \left[{|\Delta \log u|^2\over N} -{\bf\Gamma}_2(\nabla \log u, \nabla \log u)\right]ud\mu
	-{1\over N}\int_X \left|\Delta \log u-\int_X \Delta\log u ud
	\mu\right|^2 ud\mu.
\end{eqnarray*}
In particular, if the RCD$(K, N)$-condition holds, using the weak Bochner inequality, we have 
\begin{eqnarray*}
	\int_X \left[{|\Delta \log u|^2\over N}-{\bf \Gamma_2}(\nabla \log u, \nabla \log u)\right]ud\mu
	\leq -\int_X K|\nabla\log u|_w^2 ud\mu=-KH'.\label{BKN4}
\end{eqnarray*}
This yields
\begin{align*}
	\frac{1}{2}H''+\frac{H'^2}{N}+KH'\leq -{1\over N}\int_X \left|\Delta \log u-\int_X \Delta\log u ud
	\mu\right|^2 ud\mu,\label{BNK6}
\end{align*}
which implies the Riccatti EDI $(\ref{EDI})$ in Theorem \ref{EDIKM}. 

\end{remark}
We can also derive an upper bound for the Fisher information on RCD$(K, N)$ spaces.
 
\begin{theorem}\label{Ibound}On RCD$(K, N)$ space, 
	if $u$ satisfies the conditions in Theorem \ref{EDIKM}, then
\begin{equation}
	I(u(t))=\frac{dH}{dt}(u(t))\leq \frac{NK}{e^{2Kt}-1}. \label{IKbound}
\end{equation}
In particular, on RCD$(0, N)$ space (i.e., $K=0$), we have
\begin{equation}
	I(u(t))=\frac{dH}{dt}(u(t))\leq \frac{N}{2t}. \label{IKbound0}
\end{equation} 
\end{theorem}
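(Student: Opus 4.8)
The plan is to derive the bound \eqref{IKbound} purely from the Riccati-type entropy differential inequality EDI$(K,N)$ of Theorem \ref{EDIKM}, treating $I(u(t))$ as the single scalar quantity $\phi(t):=\frac{d}{dt}H(u(t))$. First I would record that, by the first order entropy dissipation formula \eqref{H1} together with Lemma \ref{Fisher}, we have $\phi(t)=I(u(t))=\int_X u|\nabla\log u|_w^2\,d\mu\ge 0$, and that $\phi$ is differentiable in $t$ since $H$ is twice differentiable by Theorem \ref{1HH2}. Rewriting \eqref{EDI} in terms of $\phi$ then gives the scalar Riccati inequality
\begin{equation*}
\phi'(t)+\frac{2}{N}\phi(t)^2+2K\phi(t)\le 0.
\end{equation*}
So the whole problem is reduced to a one-dimensional comparison against the maximal solution of the associated Riccati equation, which (since the equation is autonomous) is exactly $\bar\phi(t)=\frac{NK}{e^{2Kt}-1}$, the solution that blows up as $t\downarrow 0$.

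The cleanest way to execute the comparison is to linearize via the reciprocal. Assuming $\phi>0$ (which holds along the heat flow for non-stationary data), set $g(t):=1/\phi(t)$, so that $g'=-\phi'/\phi^2$. Dividing the Riccati inequality by $\phi^2>0$ and rearranging yields the \emph{linear} differential inequality
\begin{equation*}
g'(t)\ge 2K\,g(t)+\frac{2}{N}.
\end{equation*}
I would then multiply by the integrating factor $e^{-2Kt}$, obtaining $\bigl(e^{-2Kt}g\bigr)'\ge \tfrac{2}{N}e^{-2Kt}$, and integrate over $[s,t]$ with $0<s<t$. This gives
\begin{equation*}
e^{-2Kt}g(t)\ge e^{-2Ks}g(s)+\frac{e^{-2Ks}-e^{-2Kt}}{NK}\ge \frac{e^{-2Ks}-e^{-2Kt}}{NK},
\end{equation*}
where in the last step I simply drop the nonnegative boundary term $e^{-2Ks}g(s)\ge 0$. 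Letting $s\downarrow 0$ (so $e^{-2Ks}\to 1$) and multiplying through by $e^{2Kt}$ produces $g(t)\ge \frac{e^{2Kt}-1}{NK}$, i.e. $\phi(t)=1/g(t)\le \frac{NK}{e^{2Kt}-1}$, which is precisely \eqref{IKbound}. For the case $K=0$ I would run the same argument with the Riccati inequality $\phi'+\tfrac{2}{N}\phi^2\le 0$, giving $g'\ge \tfrac{2}{N}$ and hence $g(t)\ge \tfrac{2t}{N}$ after integrating and sending $s\downarrow 0$, which yields \eqref{IKbound0}; alternatively one recovers \eqref{IKbound0} by taking $K\to 0$ in \eqref{IKbound}, since $\frac{NK}{e^{2Kt}-1}\to \frac{N}{2t}$.

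I expect the only genuine subtlety to be the behavior at the endpoint $s\to 0$, and it is pleasant that the argument sidesteps the hard part: because $e^{-2Ks}g(s)\ge 0$ is automatic from $\phi\ge 0$, I never need the delicate asymptotics of $I(u(s))$ (equivalently $g(s)$) as $s\downarrow 0$; dropping this term is exactly what makes the estimate match the maximal, blow-up solution $\bar\phi$. The two points requiring care are therefore (i) the positivity $\phi>0$ needed to form $g=1/\phi$ — which I would justify from $\phi'\le -2K\phi$, forcing at most exponential decay and hence no vanishing in finite time once $\phi$ is positive — and (ii) the continuity of $\phi$ (hence of $e^{-2Ks}g(s)$) down to $s=0$, which is what licenses passing to the limit; both are guaranteed under the hypotheses of Theorem \ref{EDIKM} inherited from Theorem \ref{1HH2}.
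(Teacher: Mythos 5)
Your core argument --- reduce to the scalar Riccati inequality $\phi'+\tfrac{2}{N}\phi^2+2K\phi\le 0$ from Theorem \ref{EDIKM}, linearize by passing to $g=1/\phi$, and compare against the blow-up solution $NK/(e^{2Kt}-1)$ --- is exactly the proof the paper has in mind: the paper omits the details and defers to Li-Li \cite{LiLi2024TMJ}, where this Riccati comparison is what is carried out. Your computation (the linear inequality $g'\ge 2Kg+\tfrac{2}{N}$, the integrating factor $e^{-2Kt}$, dropping the nonnegative boundary term $e^{-2Ks}g(s)$, and letting $s\downarrow 0$) is correct for every sign of $K$, and your observation that no asymptotics of $I(u(s))$ as $s\downarrow 0$ are needed is exactly the right point.

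The one step that would fail as written is your justification (i) of the positivity $\phi>0$. You argue that $\phi'\le -2K\phi$ forces ``at most exponential decay and hence no vanishing in finite time once $\phi$ is positive.'' This Gronwall argument runs in the wrong direction: an upper bound on $\phi'$ yields only the upper bound $\phi(t)\le \phi(s)e^{-2K(t-s)}$; it gives no lower bound on $\phi$ and does not preclude $\phi$ from reaching zero in finite time (for instance $\phi(t)=(1-t)_+^2$ satisfies the EDI with $K=0$ near $t=1$ and vanishes at $t=1$). What your comparison actually needs, however, is the backward statement: if $\phi(t_0)>0$ at the target time $t_0$, then $\phi>0$ on all of $(0,t_0]$ --- and this does follow from the same differential inequality, used in the correct direction. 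Indeed, $\psi(t):=e^{2Kt}\phi(t)$ satisfies $\psi'=e^{2Kt}\left(\phi'+2K\phi\right)\le -\tfrac{2}{N}e^{2Kt}\phi^2\le 0$, so $\psi$ is nonincreasing and nonnegative; hence if $\phi$ vanished at some $s_0<t_0$ we would get $\psi\equiv 0$, thus $\phi\equiv 0$, on $[s_0,t_0]$, contradicting $\phi(t_0)>0$. Combined with the trivial observation that the claimed bound holds automatically when $\phi(t_0)=0$ (its right-hand side $NK/(e^{2Kt_0}-1)$, respectively $N/(2t_0)$, is positive for every $K$), this closes the gap: the case analysis on $\phi(t_0)$ replaces your appeal to ``non-stationary data,'' and the rest of your proof goes through unchanged.
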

\begin{proof} Based on the Riccatti Entropy Differential Inequality $(\ref{EDI})$,  the proof of Theorem \ref{Ibound} has been essentially given by Li-Li \cite{LiLi2024TMJ}. To save the length of the paper, we omit the detail.
\end{proof}

\begin{remark} By the same argument as used in \cite{LiLi2024TMJ}, we can also prove $(\ref{IKbound0})$ by  the Li-Yau Harnack inequality on RCD$(0, N)$ spaces. Indeed, by Theorem 1.1 in Jiang \cite{Jiang}, the Li-Yau Harnack inequality holds on RCD$(0, N)$ spaces
\begin{equation*}
	{|\nabla u|^2\over u^2}-{\partial_t u\over u}\leq {N\over 2t}, \label{LYH}
\end{equation*}
which implies
\begin{equation*}
I(u(t))=\int_X \frac{\left|\nabla u\right|^2}{u} d\mu=\int_X\left[\frac{\left|\nabla u\right|^2}{u^2}-\frac{\partial_t u}{u}\right] u d\mu \leq \frac{N}{2 t}.
\end{equation*}

\end{remark} 

\section{$W$-entropy formula and its monotonicity on RCD spaces }\label{sect3}

We now prove  Theorem \ref{WNK0} and Theorem \ref{WnNK}. 
 \medskip

\noindent{\bf Proof of Theorem \ref{WNK0}}. By the definition formula of $W_{K, N}$ and the entropy dissipation identities $(\ref{H1})$ and $(\ref{HH2})$ in Theorem \ref{1HH2}, we have  
\begin{equation}
\begin{aligned}
\frac{d}{d t} W_{N, K}(u) &= \frac{d}{d t} W_N(u)+N K\left(1-\frac{K t}{2}\right). \\
&=tH''+2H'-\frac{N}{2t}+N K\left(1-\frac{K t}{2}\right)\\
			&=-2t\int_X {\bf \Gamma_2}(\log u, \log u )ud\mu+2\int_X\|\nabla\log u \|^2_w ud\mu-\frac{N}{2t}+N K\left(1-\frac{K t}{2}\right)\\
			&= -2t\int_X \Big[{\bf \Gamma_2}(\log u, \log u )+\left(\frac{1}{t}-K\right)\Delta \log u+{N\over 4}\Big(\frac{1}{t}-K \Big)^2 -K|\nabla \log u |^2\Big]ud\mu. \label{W-Gamma2A}
\end{aligned}
\end{equation}
Under the RCD$(K, N)$-condition, the weak Bochner inequality yields
\begin{equation*}
\begin{aligned}
\frac{d}{d t} W_{N, K}(u) 
&\leq -2t\int_X \Big[ {|\Delta \log u|^2\over N}+K |\nabla \log u|^2+\left(\frac{1}{t}-K\right)\Delta \log u+{N\over 4}\Big(\frac{1}{t}-K \Big)^2 -K|\nabla \log u |^2\Big] ud\mu\\
			&= -\frac{2t}{N} \int_X\left[\Delta \log u +{N\over 2}\left(\frac{1}{t}-K\right)\right]^2ud\mu.
\end{aligned}
\end{equation*} This finishes the proof of Theorem \ref{WNK0}. \hfill $\square$

\medskip

As a corollary, we have the following result which was oringinally proved by Kuwada and Li \cite{KL}. 

\begin{corollary}
Let $(X, d, \mu)$ be an 
 RCD$(0, N)$ space and $u$ be a positive solution to the heat equation $\partial_t u=\Delta u$. Then
\begin{equation}
	\frac{d}{dt}W_N(u)\leq -\frac{2t}{N}\int_X u\Big(\Delta \log u+\frac{N}{2t}\Big)^2d\mu.\label{WW}
\end{equation}
In particular, we have
\begin{equation*}
	\frac{d}{dt}W_N(u)\leq 0.
\end{equation*}
Moreover, $\frac{d}{dt}W_N(u)=0$ at some $t=\tau$ if and only if 
\begin{equation*}
	\Delta \log u+\frac{N}{2\tau}=0.
\end{equation*}
\end{corollary}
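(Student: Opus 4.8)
The plan is to deduce the corollary from Theorem~\ref{WNK0} by specializing to $K=0$, while carrying out the completion of the square explicitly so that the equality case becomes visible. First I would record the exact $W$-entropy identity: putting $K=0$ in the formula \eqref{W-Gamma2A} from the proof of Theorem~\ref{WNK0} gives, under the integrability hypotheses inherited from Theorem~\ref{1HH2} via \eqref{H1} and \eqref{HH2},
\begin{equation*}
\frac{d}{dt}W_N(u)=-2t\int_X\Big[{\bf \Gamma_2}(\log u,\log u)+\frac{1}{t}\Delta\log u+\frac{N}{4t^2}\Big]u\,d\mu .
\end{equation*}

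Next I would split the bracket into a Bochner defect and a perfect square. The weak Bochner inequality on an RCD$(0,N)$ space, integrated against $u\,d\mu$, gives $\int_X{\bf \Gamma_2}(\log u,\log u)\,u\,d\mu\geq \frac{1}{N}\int_X(\Delta\log u)^2u\,d\mu$, and the elementary identity $\frac{1}{N}x^2+\frac{1}{t}x+\frac{N}{4t^2}=\frac{1}{N}\big(x+\frac{N}{2t}\big)^2$ with $x=\Delta\log u$ yields the exact decomposition
\begin{equation*}
\frac{d}{dt}W_N(u)=-2t\int_X\Big({\bf \Gamma_2}(\log u,\log u)-\frac{(\Delta\log u)^2}{N}\Big)u\,d\mu-\frac{2t}{N}\int_X\Big(\Delta\log u+\frac{N}{2t}\Big)^2u\,d\mu .
\end{equation*}
Both integrals are nonnegative, the first by Bochner and the second manifestly; discarding the first gives exactly \eqref{WW}, and since $t>0$ its right-hand side is $\leq 0$, proving $\frac{d}{dt}W_N(u)\leq 0$.

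For the rigidity I would read off the equality case from this decomposition. If $\frac{d}{dt}W_N(u)=0$ at $t=\tau$, both nonnegative integrals vanish; the vanishing of the second, with $u>0$ and $\tau>0$, forces $\Delta\log u+\frac{N}{2\tau}=0$ $\mu$-a.e. This is the ``only if'' direction and already follows directly from \eqref{WW}.

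The step I expect to be the real obstacle is the converse. Assuming only $\Delta\log u+\frac{N}{2\tau}=0$ annihilates the perfect-square term and makes the upper bound \eqref{WW} equal to zero, but the exact identity still carries the defect integral $\int_X\big({\bf \Gamma_2}(\log u,\log u)-\frac{1}{N}(\Delta\log u)^2\big)u\,d\mu\geq 0$, so a priori one recovers only $\frac{d}{dt}W_N(u)\leq 0$, not equality. Thus closing the biconditional requires showing that at a time $\tau$ where $\Delta\log u$ is $\mu$-a.e. the constant $-\frac{N}{2\tau}$ the weak Bochner inequality is saturated. This does not follow from the entropy identities alone, which only bound $\int_X{\bf \Gamma_2}(\log u,\log u)\,u\,d\mu$ from below; it is the content of the finer rigidity analysis behind Theorem~\ref{WnNK=0} and the Kuwada--Li rigidity recalled in Theorem~\ref{KLRigidity}. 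I would therefore close the converse by invoking that rigidity: the constancy of $\Delta\log u$ pins down the model spaces, on each of which $W_N(u(t))$ is constant, whence $\frac{d}{dt}W_N(u)=0$.
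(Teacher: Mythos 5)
Your derivation of \eqref{WW} and of the monotonicity $\frac{d}{dt}W_N(u)\le 0$ is correct and coincides with the paper's treatment: the paper offers no separate proof of this corollary, it is exactly the case $K=0$ of Theorem \ref{WNK0}, whose proof is your computation (the identity \eqref{W-Gamma2A}, the weak Bochner inequality, and the completion of the square). The ``only if'' half of the equality statement follows from \eqref{WW} just as you say.

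Your observation that the ``if'' half does not follow from the decomposition is also correct and is a genuine point: Theorem \ref{WNK0} asserts only the forward implication, and the paper nowhere proves the converse for this corollary. However, the way you propose to close it is circular as worded. Theorem \ref{WnNK=0} and Theorem \ref{KLRigidity} take $\frac{d}{dt}W_N(u)=0$ (for the fundamental solution, respectively a vanishing upper right derivative of $W$ for $P_t\mu$) as their \emph{hypothesis}, so they cannot be invoked with $\Delta\log u+\frac{N}{2\tau}=0$ as the input; moreover, on the model spaces $W_N(u(t))$ is constant only for the heat kernel issued from the vertex, not for every positive solution, so ``model spaces, on each of which $W_N(u(t))$ is constant'' is not accurate. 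The non-circular closure uses only results already in the paper and no identification of the space. From $\Delta\log u(\cdot,\tau)=-\frac{N}{2\tau}$ and Lemma \ref{Fisher}, $I(u(\tau))=-\int_X \Delta\log u\,u\,d\mu=\frac{N}{2\tau}$. The Riccati inequality of Theorem \ref{EDIKM} with $K=0$, i.e.\ $\frac{d}{dt}I+\frac{2}{N}I^2\le 0$, gives $\frac{d}{dt}\bigl(1/I\bigr)\ge \frac{2}{N}$; integrating from $s$ to $\tau$ yields $I(u(s))\ge\frac{N}{2s}$ for $s\in(0,\tau]$, while Theorem \ref{Ibound} gives the reverse bound, hence $H'(s)=I(u(s))=\frac{N}{2s}$ on $(0,\tau]$. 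Since the time derivatives in the corollary are assumed to exist, $H''(\tau)$ must agree with the left derivative of $H'$ at $\tau$, namely $-\frac{N}{2\tau^2}$, and therefore, using $\frac{d}{dt}W_N(u)=tH''+2H'-\frac{N}{2t}$ as in the proof of Theorem \ref{WNK0},
\begin{equation*}
\frac{d}{dt}W_N(u)\Big|_{t=\tau}=\tau H''(\tau)+2H'(\tau)-\frac{N}{2\tau}
=-\frac{N}{2\tau}+\frac{N}{\tau}-\frac{N}{2\tau}=0 .
\end{equation*}
This is precisely the $h(t)=\frac{N}{2}-tI(u(t))$ argument of Kuwada--Li that the paper reproduces inside the proof of the rigidity part of Theorem \ref{WnNK=0}; so your instinct about where the missing content lives is right, but the converse should be closed by entering that argument at the Fisher-information step, not by citing the rigidity theorems' conclusions.
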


\medskip

\noindent{\bf Proof of Theorem \ref{WnNK}}. 
Under the condition that the Riemannian Bochner formula $(\ref{RBFnN})$ holds, we can prove Theorem \ref{WnNK}  by the same argument as used in Li \cite{Li2012} and S. Li-Li \cite{LiLi2015PJM} for the $W$-entropy formulas (see \eqref{WLi} and \eqref{WfnKm})  on Riemannian manifolds with CD$(K, m)$-condition. For the completeness of the paper, we  reproduce the proof as follows.  By $(\ref{W-Gamma2A})$, we have
	\begin{equation*}
\begin{aligned}
\frac{d}{d t} W_{N, K}(u) 
			= -2t\int_X \Big[{\bf \Gamma_2}(\log u, \log u )+\left(\frac{1}{t}-K\right)\Delta \log u+{N\over 4}\Big(\frac{1}{t}-K \Big)^2 -K|\nabla \log u |^2\Big]ud\mu.
\end{aligned}
\end{equation*}
Splitting 
$$\Delta \log u={\rm Tr}\nabla^2 \log u+(\Delta-{\rm Tr} \nabla^2) \log u,$$
we have
\begin{equation*}
\begin{aligned}
\frac{d}{d t} W_{N, K}(u) &= -2t\int_X \Big[{\bf \Gamma_2}(\log u, \log u )
+\left(\frac{1}{t}-K\right){\rm Tr}\nabla^2 \log u+{n\over 4}\Big(\frac{1}{t}-K \Big)^2-K|\nabla \log u |^2\Big]ud\mu\\
&\hskip1cm -2t\int_X \left[\left(\frac{1}{t}-K\right)(\Delta -{\rm Tr}\nabla^2)\log u 
+{N-n\over 4}\Big(\frac{1}{t}-K \Big)^2 \right]ud\mu.
\end{aligned}
\end{equation*}
Under the assumption that the Riemannian Bochner formula $(\ref{RBFnN})$ holds, we have
\begin{eqnarray*}
& &{\bf \Gamma_2}(\log u, \log u)+\left({1\over t}-K\right){\rm Tr}\nabla^2 \log u+{n\over 4}\left({1\over t}-K\right)^2\\
& &\hskip1.5cm =\left\|\nabla^2 \log u+{1\over 2}\left({1\over t}-K\right){\bf g}\right\|^2_{\rm HS}+\operatorname{\bf Ric_{N, n}}(\Delta)(\nabla\log u, \nabla\log u)+{|(\Delta-{\rm Tr}\nabla^2)\log u|^2\over N-n}.\end{eqnarray*}
This yields 
\begin{equation*}
\begin{aligned}
\frac{d}{d t} W_{N, K}(u, t)&=  -2t \int_M \left[\left\|\nabla^2 \log u+{1\over 2}\left({1\over t}-K\right){\bf g}\right\|^2_{\rm HS}+(\operatorname{\bf Ric_{N, n}}(\Delta)-K{\bf g})(\nabla \log u, \nabla \log u) \right]u d \mu\\
& \hskip0.5cm -2t\int_X \left[{|(\Delta-{\rm Tr}\nabla^2)\log u)|^2\over N-n} +\left({1\over t}-K\right)(\Delta-{\rm Tr}\nabla^2) \log u+{N-n\over 4}\left(\frac{1}{ t}-K\right)^2 \right]ud\mu\\
&=-  2 t\int_X \left[ \left\|\nabla^2 \log u+{1\over 2}\left({1\over t}-K\right){\bf g}\right\|^2_{\rm HS}+\left(\operatorname{\bf Ric_{N, n}}(\Delta)-K{\bf g}\right)(\nabla \log u, \nabla \log u)\right] u d \mu\\
& \hskip0.5cm -\frac{2 t}{N-n} \int_M\left[({\rm Tr}\nabla^2-\Delta)\log u-\frac{N-n}{2} \left(\frac{1}{ t}-K\right)\right]^2 u d \mu.
\end{aligned}
\end{equation*}
This completes the proof of Theorem \ref{WnNK}. \hfill $\square$

%
%
%

\medskip

\noindent{\bf Proof of the rigidity part of Theorem \ref{WnNK=0} }. The proof of the rigidity part  of Theorem \ref{WnNK=0} has been essentially given in Kuwada-Li \cite{KL} (which modifies a part of the proof of the rigidity part of Theorem \ref{Lirigidity} in \cite{Li2012}). For the convenience of the readers, we reproduce it here. By \eqref{dW=0}, we see that ${d\over dt}W_N(u(t))\leq 0$ for all $t>0$.  If 
${d\over dt}W_{N} (u)=0$ holds at some $t=t_*>0$ for the fundamental solution of the heat equation $\partial_t u=\Delta u$,  we have

\begin{equation*}\label{GGGG3}
\Delta \log u+{N\over 2t}=0, \ \ \ {\bf Ric_{N, n}}(\Delta)=0.
	\end{equation*}
	In particular, the Fisher information at $t=t_*$ satisfies 

\begin{equation}\label{GGGG4}
I(u(t_*))=-\int_X \Delta \log u ud\mu={N\over 2t}.
	\end{equation}
	As in \cite{KL}, we can further prove that 
\begin{equation}\label{GGGG5}
I(u(t))={N\over 2t}\ \ \ \forall t\in (0, t_*).
	\end{equation}
	Indeed, following \cite{KL}, we introduce 
	$$h(t)={N\over 2}-tI(u(t)).$$
	By \eqref{IKbound0} in Theorem \ref{Ibound}, we have
	$h(t)\geq 0$ for all $t>0$.  
	Moreover, from the definition formula of $W_N(u(t))$, we have
%

\begin{equation*}\label{WHh}
\begin{aligned}
{d\over dt}W_N(u(t))={d\over dt}H(u(t))-{N\over 2t}-h'(t)= I(u(t))-{N\over 2t}-h'(t)=-{h(t)\over t}-h'(t).
\end{aligned}
\end{equation*}
Thus ${d\over dt}(th(t))=h(t)+th'(t))=-t{d\over dt}W_N(u(t))\geq 0$. This yields, for any $0<t<t_*$, we have $0\leq th(t)\leq t_*h(t_*)$. 
By \eqref{GGGG4}, $I(u(t_*))={N\over2t_*}$. Thus $h(t)=0$ for all $t\in (0, t_*]$. Equivalently, \eqref{GGGG5} holds. 
By \cite{JLZ}, the  Li-Yau Harnack inequality holds on RCD$(0, N)$ space
\begin{equation}\label{LYHNK0}
{|\nabla u|^2\over u^2}-{\Delta u\over u}\leq {N\over 2t}.
  \end{equation}
  This yields, for all $t>0$ we have (see \eqref{IKbound0} in Theorem \ref{Ibound})
  \begin{equation*}\label{LYHNK=0}
I(u(t))\leq {N\over 2t}.
  \end{equation*}
Thus, the equality $I(u(t))={N\over 2t}$ on $(0, t_*]$ implies that the Li-Yau Harnack inequality \eqref{LYHNK0} is indeed an equality for $\mu$-a.e. $x\in X$ and for all $t\in (0, t_*]$. From this fact, as already proved  in \cite{KL} (see $(4.4)$ in \cite{KL}), for any nice function 
$f\in D(\Delta)\cap L^1(X, \mu)$  with $df, d|\nabla f|\in L^1(X, \mu)$, we have
\begin{eqnarray*}\label{ILY}
-\int_X \langle \nabla d^2(x, x_0), \nabla f \rangle d\mu= 2N\int_X fd\mu.
  \end{eqnarray*}

%
%
%
%
In other words, it holds
  $$\Delta d^2(x, x_0)=2N.$$

By Theorem 6.1 in \cite{GV2023} and Theorem 2.8 in \cite{HP}, this proves the  rigidity theorem on the $W$-entropy on RCD$(0, N)$ spaces.  \hfill $\square$

\begin{remark}
In \cite{LiLi2015PJM} (see \cite{Li2012} for $K=0$), it was proved that, in the case where $(M, g,\mu)$ is an $n$-dimensional 
compact weighted Riemannian manifold, $d\mu=e^{-\phi}$ with $\phi\in C^4(M)$, and if we assume  that 
$$Ric_{m, n}(L)=Ric+\nabla^2\phi-{\nabla\phi\otimes \nabla\phi\over m-n}\geq K,$$
where $L=\Delta-\nabla\phi\cdot\nabla$ is the Witten Laplacian on the weighted Riemannian manifold $(M,g,\mu)$, $K\in \mathbb{R}$ and $N\geq n$, then there exists a positive and smooth
function $u=e^{-\frac v2}$  such that $v$ achieves the optimal logarithmic Sobolev
constant $\mu_K(t)$ defined by
\begin{equation}\label{LSI}
\mu_K(t):=\inf\left\{W_{N,K}(u,t):\int_M\frac{e^{-v}}{(4\pi t)^{N/2}}d\mu=1\right\},
\end{equation}
where
$$
W_{N,K}(u)=\int_M\left(t|\nabla v|^2+v-N\left(1-{1\over 2}Kt\right)^2\right)\frac{e^{-v}}{(4\pi t)^{\frac N2}}d\mu
$$
 is the $W$-entropy on the weighted Riemannian manifold with CD$(K,N)$ condition.
Indeed, by a
similar argument as used in Perelman \cite{P1}, it is shown in \cite{Li2012, LiLi2015PJM} that the minimization
problem \eqref{LSI} has a non-negative minimizer $u \in H^1(M, \mu)$, which satisfies the following 
Euler-Lagrange equation
$$
-4tLu-2u\log u-N\left(1-\frac {K}{2t}\right)^2u=\mu_K(t)u.
$$
In the proof of this result, S. Li and Li used the regularity theory of elliptic PDEs on Riemannian manifolds to derive $u\in C^{1,\alpha}(M)$ for $\alpha\in(0,1)$. Then, an argument due to Rothaus \cite{Roth}
allows them to prove that $u$ is strictly positive and smooth. Hence $v = -2\log u$ is also
smooth. It would be interesting to study the similar question and to see what happens on RCD$(K,N)$ spaces. To this end, one  need to establish  the regularity theory of elliptic PDEs on RCD spaces. It will be an interesting problem for the study in the future.  See also Kuwada-Li \cite{KL} for the case $K=0$.
\end{remark}

\section{Shannon entropy power on RCD spaces}\label{sect4}

In his seminal work on information theory \cite{Sh} in 1948, Shannon established the channel coding theorem through the introduction of entropy power. Since then, entropy power has been a fundamental topic of research in information theory. Shannon was the first one to propose and prove the entropy power inequality for discrete random variables \cite{Sh} and also introduced the concept of entropy power for continuous random vectors. 

More precisely,  let $X$ be an  random vector on $\mathbb{R}^n$ whose  the probability distribution is $f(x)dx$, and 
   \begin{equation*}
   	H(X):= - \int_{\mathbb{R}^n} f (x) \log f (x)dx ,
   \end{equation*}
    be the  Shannon  entropy of  the law of $X$.  The Shannon entropy power of $X$ is defined as follows 
\begin{equation*}
	\mathcal{N} (X) := e ^{\frac{2}{n} H(X)}.
\end{equation*}

Then the Shannon entropy power inequality states that, for two independent random vectors $X$ and $Y$ on $\mathbb{R}^n$, the following inequality holds:  
\begin{equation}
	\mathcal{N}(X+Y)\geq \mathcal{N}(X)+\mathcal{N}(Y), \label{NXY1}
\end{equation}
with equality if and only if the covariance matrix of $X$ is proportional to the covariance matrix of $Y$. For more 
rigorous proof of the Shannon entropy power inequality, see 
Stam  \cite{Stam} and Blachman \cite{Bla}. See also \cite{CT}.

By Dembo \cite{Dem1} and Dembo-Cover-Thomas \cite{Dem2}, the  Shannon-Stam Entropy Power Inequality $(\ref{NXY1})$  is equivalent to the following inequalities for $H$ and for $\mathcal{N}$:  
for any $\lambda\in (0, 1)$, it holds
\begin{eqnarray}
H(\sqrt{\lambda} X+\sqrt{1-\lambda}Y)\leq \lambda H(X)+(1-\lambda) H(Y),  \label{HXY}
\end{eqnarray}
or 
\begin{eqnarray}
\mathcal{N}(\sqrt{\lambda} X+\sqrt{1-\lambda}Y)\geq \lambda \mathcal{N}(X)+(1-\lambda) \mathcal{N}(Y). \label{NXY2}
\end{eqnarray}

The Shannon-Stam Entropy Power Inequality  $(\ref{NXY1})$ is an analogue of the Brunn-Minkowski isoperimetric inequality in convex geometry: for any 
$A, B\in  \mathscr{B}(\mathbb{R}^n)$, letting $A+B:=\{x+y: x\in A,  y\in B\}$, then 
\begin{eqnarray*}
V^{1/n}(A+B)\geq V^{1/n}(A)+V^{1/n}(B),
\end{eqnarray*}
where $V(A)$, $V(B)$ and $V(A+B)$ denote the volume of the sets $A$, $B$ and $A+B$ respectively. Indeed,  EPI $(\ref{NXY1})$ is equivalent to 
\begin{eqnarray*}
e^{2H(X+Y)\over n}\geq e^{2H(X)\over n}+e^{2H(Y)\over n}. 
\end{eqnarray*}
For a unified  proof of the Shannon-Stam Entropy Power Inequality and the Brunn-Minkowski isoperimetric inequality, see Dembo  et al. \cite{Dem2}. See also \cite{CT}.

 In 1985, Costa \cite{Co} proved the concavity of the Shannon entropy power along the 
heat equation on $\mathbb{R}^n$. More precisely, let $u(x, t)$ be a solution to
 the heat equation on $\mathbb{R}^n$, i.e., 
 \begin{equation*}
 	\partial_t u = \Delta u, \quad u(x, 0) = f (x).
 \end{equation*} 
 Let $H(u(t)) = -\int_M  u \log udx$ be the Shannon entropy associated to the heat distribution $u(x, t)dx$ at time $t$ and $\mathcal{N} (u(t)) = e ^{\frac{2}{n} H(u(t))}$ be the Shannon entropy power. Then 
 \begin{equation}\label{Entropy power}
 	 \frac{d^2 \mathcal{N}(u(t))}{dt^2} \leq 0.
 \end{equation}

Using an argument based on the Blachman-Stam inequality \cite{Bla}, the original proof of the Entropy Power Concave Inequality (EPCI) $(\ref{Entropy power})$ has been simplified by Dembo et al. \cite{Dem1, Dem2} and Villani \cite{villani2000}. 
In \cite{villani2000},  Villani pointed out the possibility of extending the Entropy Power Concave Inequality (EPCI) $(\ref{Entropy power})$ to Riemannian manifolds with non-negative Ricci curvature using the $\Gamma_2$-calculation. In \cite{ST2014}, Savar\'{e} and Toscani  proved 
that the R\'{e}nyi entropy power is also concave along the porous media equation and the fast diffusion equation, given by $\partial_t u=\Delta u^p$ with $ p> 1-\frac{2}{n}  $ on $\mathbb{R}^n $.

In a very recent paper \cite{LiLi2024TMJ},  Li and Li  derived the corresponding Entropy Differential Inequality ($\operatorname{EDI}$)  and proved the $K$-concavity of the entropy power for the heat equation associated with the Witten Laplacian on complete Riemannian manifolds with CD$(K, m)$-condition.
They also discovered the NIW formula, which links the $W$-entropy to entropy power $N$,the Fisher information $I$ and the Perelman $W$-entropy for the heat equation associated with the Witten Laplacian on complete Riemannian manifolds with CD$(K, m)$-condition.  One of the important feature is that one can use the NIW formula to provide a second proof of the concavity of the Shannon entropy power. As an application, they established the corresponding entropy isoperimetric inequality on Riemannian manifolds. Furthermore, Li and Li \cite{LiLiRenyi} extended these results to 
R\'enyi entropy, proving analogous formulas for the porous media equation and fast diffussion equation associated with the Witten Laplacian on complete Riemannian manifolds with CD$(K, m)$-condition.

The purpose of this paper is to extend Li-Li's $K$-concavity of the Shannon entropy power from complete Riemannian manifolds with CD$(K, m)$-condition to metric measure spaces satisfying the Riemannian curvature-dimension. condition RCD$(0, N)$. Let us mention that Erbar, Kuwada and Sturm \cite{EKS2015} 
have proved the concavity of the Shannon entropy power for the heat semigroup on metric measure spaces satisfying the Riemannian curvature-dimension condition RCD$(0, N)$. Our main result can be also regarded as a natural extension of Ebar-Kuwada-Sturm's result from RCD$(0, N)$ spaces to RCD$(K, N)$ spaces.

\medskip
 The following NIW formula was essentially proved in  Li-Li \cite{LiLi2024TMJ} on Riemannian manifolds and can be easily extended to  RCD spaces. It indicats an interesting relationship between the $W$-entropy $W$, the Fisher information $I$ and the Shannon entropy power $N$. 

\begin{theorem}\label{thmNIW} Let $\mathcal{N}(u)=e^{2H(u)\over N}$ be the Shannon entropy power associated with the linear heat equation $\partial_t u=\Delta u$
	 on RCD$(K, N)$ space. Then  the following NIW formulas hold on  metric measure spaces
	\begin{equation*}
		\frac{d^2\mathcal{N}}{dt^2}=\frac{2\mathcal{N}}{N}\Big
		[\frac{2}{N}\Big(I-\frac{N(1-Kt)}{2t}\Big)^2+\frac{1}{t}\frac{dW_{N,K}}{dt}\Big].
	\end{equation*}
	In particular, on  RCD$(0, N)$ space, we have
	\begin{equation*}
		\frac{d^2\mathcal{N}}{dt^2}=\frac{2\mathcal{N}}{N}\Big[\frac{2}{N}\Big(I-\frac{N}{2t}\Big)^2
		+\frac{1}{t}\frac{dW_N}{dt}\Big].
	\end{equation*}
	\end{theorem}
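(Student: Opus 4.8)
The plan is to reduce the entire statement to an elementary identity among the scalar functions $H(t):=H(u(t))$, $I(t)$ and $W_{N,K}(t)$, and then to verify it by direct differentiation. All the analytic content is already contained in the earlier results: under the integrability hypotheses of Theorem \ref{1HH2} the map $t\mapsto H(u(t))$ is twice differentiable with $H'(t)=I(u(t))$ (the first order dissipation formula \eqref{H1}, read through Lemma \ref{Fisher} and Theorem \ref{Ibound}) and $H''(t)=-2\int_X{\bf \Gamma_2}(\log u,\log u)\,u\,d\mu$ (the second order formula \eqref{HH2}). Consequently $\mathcal{N}=e^{2H/N}$ is a positive $C^2$ function of $t$, the left-hand side is well defined, and the remaining task is purely computational.

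First I would differentiate $\mathcal{N}$ directly. Writing $\log\mathcal{N}=\tfrac{2}{N}H$ and substituting $H'=I$ gives
\[
\frac{d\mathcal{N}}{dt}=\frac{2I}{N}\,\mathcal{N},\qquad
\frac{d^2\mathcal{N}}{dt^2}=\frac{2\mathcal{N}}{N}\Big(H''+\frac{2}{N}(H')^2\Big)=\frac{2\mathcal{N}}{N}\Big(H''+\frac{2}{N}I^2\Big).
\]
Hence the asserted NIW formula is equivalent to the scalar identity
\[
H''+\frac{2}{N}I^2=\frac{2}{N}\Big(I-\frac{N(1-Kt)}{2t}\Big)^2+\frac{1}{t}\frac{d}{dt}W_{N,K},
\]
so that everything comes down to rewriting the bracket $H''+\tfrac{2}{N}I^2$ in the stated form.

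The second ingredient is the $W$-entropy formula \eqref{W-Gamma2} of Theorem \ref{WNK0}. Feeding the dissipation identities \eqref{H1}, \eqref{HH2} and the Fisher identity of Lemma \ref{Fisher} into \eqref{W-Gamma2} expresses $\tfrac{d}{dt}W_{N,K}$ through the same scalars, in the shape $tH''+2I$ plus explicit elementary functions of $t$ and $K$. Dividing by $t$, solving for $H''$ and inserting it into the expression for $\mathcal{N}''$, I would then complete the square in $I$. The $t$-dependent part assembles into a square by virtue of the algebraic identity
\[
\frac{N^2}{4t^2}-\frac{N^2K}{2t}+\frac{N^2K^2}{4}=\Big(\frac{N(1-Kt)}{2t}\Big)^2,
\]
which is exactly what produces the argument $I-\tfrac{N(1-Kt)}{2t}$. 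The case $K=0$ is the transparent check: there $\tfrac{1}{t}\tfrac{d}{dt}W_N=H''+\tfrac{2I}{t}-\tfrac{N}{2t^2}$ while $\tfrac{2}{N}(I-\tfrac{N}{2t})^2=\tfrac{2}{N}I^2-\tfrac{2I}{t}+\tfrac{N}{2t^2}$, and the two add up to exactly $H''+\tfrac{2}{N}I^2$, giving the RCD$(0,N)$ identity at once.

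The step I expect to be decisive is the bookkeeping of the $K$-linear terms. Completing the square generates, from the cross term $-2I\cdot\tfrac{N(1-Kt)}{2t}$, not only the expected $-\tfrac{2I}{t}$ but also a contribution proportional to $KI$; for the stated identity to close, this contribution must be matched precisely by the corresponding $K$-linear term of $\tfrac{1}{t}\tfrac{d}{dt}W_{N,K}$ coming from \eqref{W-Gamma2} and the definition of $H_{N,K}$. I would therefore fix all normalisations on the clean $K=0$ computation above and then track the $K$-dependent coefficients of $\tfrac{d}{dt}W_{N,K}$ against those produced by the square, since the whole statement for $K\neq 0$ hinges on the exact cancellation of these $Kt$ contributions; verifying this matching is the crux of the argument, after which the general identity follows and the RCD$(0,N)$ statement is recovered by setting $K=0$.
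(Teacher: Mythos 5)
Your reduction is set up correctly, and your $K=0$ verification is complete and agrees with the paper (whose own ``proof'' is a one-line citation of Li--Li \cite{LiLi2024TMJ}): from $\mathcal{N}=e^{2H/N}$ and $H'=I$ one gets $\tfrac{N}{2\mathcal{N}}\mathcal{N}''=H''+\tfrac{2}{N}I^2$, and the theorem becomes a scalar identity in $H$, $I$, $W_{N,K}$. The genuine gap is exactly the step you flag as ``the crux'' and then leave unverified: the $K$-linear bookkeeping does \emph{not} close. With the paper's own convention, used in the proofs of Theorem \ref{WNK0} and Theorem \ref{ECPKN},
\begin{equation*}
\frac{d}{dt}W_{N,K}=\frac{d}{dt}W_{N}+NK\Big(1-\frac{Kt}{2}\Big)=tH''+2I-\frac{N}{2t}+NK-\frac{NK^2t}{2},
\end{equation*}
so every $K$-dependent term in $\tfrac1t\tfrac{d}{dt}W_{N,K}$, namely $\tfrac{NK}{t}-\tfrac{NK^2}{2}$, is a deterministic function of $t$: there is no term proportional to $KI$ available to absorb the one produced by the square. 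Indeed,
\begin{equation*}
\frac{2}{N}\Big(I-\frac{N(1-Kt)}{2t}\Big)^2=\frac{2}{N}I^2-\frac{2I}{t}+2KI+\frac{N}{2t^2}-\frac{NK}{t}+\frac{NK^2}{2},
\end{equation*}
and adding $\tfrac1t\tfrac{d}{dt}W_{N,K}=H''+\tfrac{2I}{t}-\tfrac{N}{2t^2}+\tfrac{NK}{t}-\tfrac{NK^2}{2}$ gives $H''+\tfrac{2}{N}I^2+2KI$: all deterministic $K$-terms cancel, but the solution-dependent term $2KI$ survives, and no renormalisation of $H_{N,K}$ by a function of $t$ alone can remove it. Since $\tfrac{N}{2\mathcal{N}}\big(\mathcal{N}''+2K\mathcal{N}'\big)=H''+\tfrac{2}{N}I^2+2KI$ exactly, carrying out your computation proves
\begin{equation*}
\frac{d^2\mathcal{N}}{dt^2}+2K\frac{d\mathcal{N}}{dt}=\frac{2\mathcal{N}}{N}\Big[\frac{2}{N}\Big(I-\frac{N(1-Kt)}{2t}\Big)^2+\frac{1}{t}\frac{dW_{N,K}}{dt}\Big],
\end{equation*}
not the displayed identity with $\mathcal{N}''$ alone on the left.

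Concretely: the statement as printed is correct only at $K=0$ (which is also the only case the paper actually invokes later; in the proof of Theorem \ref{ECPKN} the NIW formula is used with $W_N$ and the $K$-terms are handled separately, and Theorem \ref{K-concavity} concerns precisely $\mathcal{N}''+2K\mathcal{N}'$), and for $K\neq0$ it is off by the first-order term $2K\mathcal{N}'$. Your instinct to ``track the $K$-dependent coefficients'' was the right one, but postponing that check is where the proposal fails as a proof: the cancellation you postulate does not occur, because the discrepancy $2KI$ depends on the solution $u$ while $W_{N,K}-W_N$ depends only on $t$. Had you completed the bookkeeping you would have been forced to either correct the left-hand side to $\mathcal{N}''+2K\mathcal{N}'$ or conclude the stated $K\neq0$ formula is a misprint. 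Everything else in your plan --- the differentiability input from Theorem \ref{1HH2}, the identification $H'=I$ via Lemma \ref{Fisher}, and the clean $K=0$ computation --- is sound.
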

		\begin{proof}
		{The proof follows the same argument as in Li-Li \cite{LiLi2024TMJ}.}
\end{proof}

We now state the main result of this section. 

\begin{theorem}\label{K-concavity} Let  $\mathcal{N}(u)=e^{2H(u)\over N}$ be the Shannon entropy power associated with the linear heat equation $\partial_t u=\Delta u$ on an RCD$(K, N)$ space. Then 
\begin{equation*}
	\frac{d^2\mathcal{N}}{dt^2}+2K\frac{d\mathcal{N}}{dt} \leq 0.
\end{equation*}
\end{theorem}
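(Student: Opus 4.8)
The plan is to reduce the claimed inequality for the entropy power to the Riccatti-type entropy differential inequality \eqref{EDI} of Theorem \ref{EDIKM} by a direct chain-rule computation, exploiting the fact that $\mathcal{N}$ is an exponential of the Shannon entropy. Since $\mathcal{N}(u(t))=e^{2H(u(t))/N}$ and the hypotheses carried over from Theorem \ref{1HH2} guarantee that $t\mapsto H(u(t))$ is twice differentiable with derivatives given by \eqref{H1} and \eqref{HH2}, the combination $\mathcal{N}''+2K\mathcal{N}'$ factors as a strictly positive multiple of $\mathcal{N}$ times precisely the left-hand side of \eqref{EDI}. The sign information from Theorem \ref{EDIKM} then yields the conclusion in one line.

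Concretely, I would first differentiate. Writing $H'=\frac{d}{dt}H(u)$ and $H''=\frac{d^2}{dt^2}H(u)$, the chain rule gives
\begin{equation*}
\mathcal{N}'=\frac{2}{N}\,H'\,\mathcal{N},\qquad
\mathcal{N}''=\frac{2}{N}\,\mathcal{N}\Big(H''+\frac{2}{N}(H')^2\Big).
\end{equation*}
Combining these two identities produces the key algebraic relation
\begin{equation*}
\mathcal{N}''+2K\mathcal{N}'
=\frac{2}{N}\,\mathcal{N}\Big(H''+\frac{2}{N}(H')^2+2KH'\Big).
\end{equation*}
The bracket on the right is exactly the left-hand side of the entropy differential inequality \eqref{EDI}, so Theorem \ref{EDIKM} shows it is nonpositive on RCD$(K,N)$ spaces. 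Since $\mathcal{N}=e^{2H/N}>0$ and $2/N>0$, the product is nonpositive, which is the asserted inequality $\mathcal{N}''+2K\mathcal{N}'\le 0$. If a quantitative version is wanted, I would instead feed in the sharper bound $H''+\frac{2}{N}(H')^2+2KH'\le -\frac{2}{N}\int_X\big(\Delta\log u-\int_X u\,\Delta\log u\,d\mu\big)^2 u\,d\mu$ from the refined EDI, recovering the stronger (unlabelled) theorem stated just before Corollary \ref{EKSN}, of which the present statement is the immediate consequence after discarding the nonpositive remainder. Alternatively, the same conclusion follows from the NIW formula of Theorem \ref{thmNIW} together with $\frac{d}{dt}W_{N,K}(u)\le 0$ from Theorem \ref{WNK0}, but the EDI route is the most economical.

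The main point to stress is that, at the level of this theorem, there is essentially no obstacle: the entire argument is the elementary chain rule above. All of the genuine analytic difficulty is upstream and already resolved in the quoted results—namely establishing the second-order dissipation formula \eqref{HH2} and inequality \eqref{H2} on RCD$(K,N)$ spaces (the regularization scheme and the weak Bochner inequality underlying Theorem \ref{1HH2}), and then controlling the cross term by Cauchy--Schwarz to obtain \eqref{EDI} in Theorem \ref{EDIKM}. The only item I would verify explicitly in the write-up is that the integrability conditions of Theorem \ref{1HH2} indeed legitimize differentiating $\mathcal{N}=e^{2H/N}$ twice in $t$, so that the displayed chain-rule identities are valid pointwise in $t\in(0,\infty)$.
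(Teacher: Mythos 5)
Your proposal is correct and takes essentially the same route as the paper: the paper's proof consists of exactly the chain-rule identities $\mathcal{N}'=\frac{2H'}{N}\mathcal{N}$, $\mathcal{N}''=\frac{2\mathcal{N}}{N}\bigl(H''+\frac{2H'^2}{N}\bigr)$, followed by an application of the EDI$(K,N)$ inequality \eqref{EDI} from Theorem \ref{EDIKM} to conclude $\mathcal{N}''\leq -2K\mathcal{N}'$. Your remarks on the quantitative refinement and the alternative NIW route also match the surrounding material in the paper (Theorem \ref{ECPKN} and Theorem \ref{thmNIW}).
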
 
\begin{proof} Note that 
	\begin{eqnarray*}
	\mathcal{N}'&=&\frac{2H'}{N}\mathcal{N},\\
	\mathcal{N}''&=&\frac{2H''}{N}\mathcal{N}+\frac{4H'^2}{N^2}\mathcal{N}=\frac{2\mathcal{N}}{N}\Big(H''+\frac{2H'^2}{N}\Big). 
	\end{eqnarray*}
Using $\operatorname{EDI}(K,N)$ in \eqref{EDI}, we have
\begin{eqnarray*}
\mathcal{N}''\leq -\frac{4K}{N}\mathcal{N}H'=-2K\mathcal{N}'.
	\end{eqnarray*}
\end{proof}

\medskip

We can now use the NIW formula to provide an alternative proof of Theorem \ref{K-concavity}.

%

\begin{theorem}\label{ECPKN}
Let $(X, d, \mu)$ be an 
 RCD$(K,N)$ space. Then
\begin{equation*}
	\begin{split}
		\frac{N}{2\mathcal{N}}\frac{d^2\mathcal{N}}{dt^2}\leq& -2K\int_X |\nabla \log u |_w^2d\mu
		-\frac{2}{N}\int_X \Big[\Delta \log u-\int_X u \Delta \log u  d\mu \Big]^2 u d\mu.
		\end{split}
	\end{equation*}
As a consequence, we have
	\begin{eqnarray*}
	\frac{N}{2\mathcal{N}}\Big[\frac{d^2\mathcal{N}}{dt^2}+2K\frac{d\mathcal{N}}{dt}\Big]\leq -\frac{2}{N}\int_X \Big[\Delta \log u-\int_X u\Delta \log u d\mu \Big]^2 ud\mu.
	\end{eqnarray*}
	In particular, on 
 RCD$(K,N)$ space, we have
 \begin{eqnarray*}
	\frac{d^2\mathcal{N}}{dt^2}+2K\frac{d\mathcal{N}}{dt}\leq 0,
		\end{eqnarray*}
	and the left hand side vanishes if and only if  
	$$\Delta \log u=\int_X u \Delta \log u d\mu.$$
	\end{theorem}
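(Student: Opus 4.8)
The plan is to read off the statement directly from the two entropy dissipation formulas of Theorem~\ref{1HH2} together with Lemma~\ref{Fisher}, following the same scheme as the proof of the Riccati inequality in Theorem~\ref{EDIKM}. First I would differentiate $\mathcal{N}=e^{2H(u)/N}$ twice, obtaining the purely algebraic identities
\begin{equation*}
\frac{N}{2\mathcal{N}}\frac{d\mathcal{N}}{dt}=\frac{d}{dt}H(u),\qquad
\frac{N}{2\mathcal{N}}\frac{d^2\mathcal{N}}{dt^2}=\frac{d^2}{dt^2}H(u)+\frac{2}{N}\Big(\frac{d}{dt}H(u)\Big)^2,
\end{equation*}
exactly as in the proof of Theorem~\ref{K-concavity}. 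By the first order dissipation formula \eqref{H1} and Lemma~\ref{Fisher}, $\frac{d}{dt}H(u)=-\int_X u\,\Delta\log u\,d\mu=\int_X u|\nabla\log u|_w^2\,d\mu$, so that $\big(\frac{d}{dt}H(u)\big)^2=\big(\int_X u\,\Delta\log u\,d\mu\big)^2$.

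Next I would insert the second order dissipation inequality \eqref{H2}, namely $\frac{d^2}{dt^2}H(u)\leq-\frac{2}{N}\int_X u(\Delta\log u)^2\,d\mu-2K\int_X u|\nabla\log u|_w^2\,d\mu$, into the identity above to get
\begin{equation*}
\frac{N}{2\mathcal{N}}\frac{d^2\mathcal{N}}{dt^2}\leq-2K\int_X u|\nabla\log u|_w^2\,d\mu-\frac{2}{N}\int_X u(\Delta\log u)^2\,d\mu+\frac{2}{N}\Big(\int_X u\,\Delta\log u\,d\mu\Big)^2.
\end{equation*}
Since $u\,d\mu$ is a probability measure, the last two terms are $-\frac{2}{N}$ times the variance of $\Delta\log u$ under $u\,d\mu$; by the elementary identity $\int g^2\,d\nu-(\int g\,d\nu)^2=\int\big(g-\int g\,d\nu\big)^2\,d\nu$ they collapse into $-\frac{2}{N}\int_X\big[\Delta\log u-\int_X u\,\Delta\log u\,d\mu\big]^2u\,d\mu$, which is precisely the first displayed inequality of the theorem.

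For the stated consequence I would add $2K$ times the identity $\frac{N}{2\mathcal{N}}\frac{d\mathcal{N}}{dt}=\int_X u|\nabla\log u|_w^2\,d\mu$ to the inequality just obtained; the two occurrences of $2K\int_X u|\nabla\log u|_w^2\,d\mu$ cancel, leaving $\frac{N}{2\mathcal{N}}\big[\frac{d^2\mathcal{N}}{dt^2}+2K\frac{d\mathcal{N}}{dt}\big]\leq-\frac{2}{N}\int_X[\Delta\log u-\int_X u\,\Delta\log u\,d\mu]^2u\,d\mu\leq0$. Because $\mathcal{N}>0$, this gives $\frac{d^2\mathcal{N}}{dt^2}+2K\frac{d\mathcal{N}}{dt}\leq0$; moreover the chain forces equality only if the variance vanishes, i.e. $\Delta\log u=\int_X u\,\Delta\log u\,d\mu$ holds $u\,d\mu$-a.e. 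I expect the real difficulty to lie not in this algebra, which is routine, but in the justification underneath it: the validity of \eqref{H1}, \eqref{H2} and Lemma~\ref{Fisher} on a non-smooth RCD$(K,N)$ space rests on the integrability conditions \eqref{C0}, \eqref{C1} and \eqref{C4}, which are verified through the regularization scheme $u^\delta$ and the Li--Yau Harnack inequality of Theorem~\ref{zhulemma}. Note that the weak Bochner inequality is already baked into \eqref{H2}, so no separate Bochner step is needed, and a sharp characterization of the equality case would additionally require analyzing when \eqref{H2} itself is saturated, which is where the rigidity analysis of Theorem~\ref{WnNK=0} enters.
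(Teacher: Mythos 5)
Your proposal is correct, and every step checks out: the two algebraic identities for $\frac{N}{2\mathcal{N}}\mathcal{N}'$ and $\frac{N}{2\mathcal{N}}\mathcal{N}''$, the substitution of \eqref{H1}, Lemma \ref{Fisher} and \eqref{H2}, the variance identity, and the cancellation of the $2K\int_X u|\nabla\log u|_w^2\,d\mu$ terms all go through exactly as you describe. However, it is not the route the paper takes for this particular theorem. The paper's proof of Theorem \ref{ECPKN} is deliberately built on the NIW formula of Theorem \ref{thmNIW}: it first computes $\frac{d}{dt}W_N(u)=tH''+2H'-\frac{N}{2t}$ in terms of ${\bf\Gamma}_2$ via the dissipation formulas, then inserts this into $\frac{N}{2\mathcal{N}}\frac{d^2\mathcal{N}}{dt^2}=\frac{2}{N}\big(I-\frac{N}{2t}\big)^2+\frac{1}{t}\frac{dW_N}{dt}$, watches the $\frac{N}{2t}$ terms cancel to leave $\frac{2}{N}\big(\int_X u\Delta\log u\,d\mu\big)^2-2\int_X{\bf\Gamma}_2(\nabla\log u,\nabla\log u)\,u\,d\mu$, and only then invokes the weak Bochner inequality \eqref{BKN3} as a separate final step. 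Your proof bypasses the $W$-entropy and the NIW formula entirely and instead works directly with $H$, $H'$, $H''$, with the Bochner inequality already absorbed into \eqref{H2}; in effect you combine the proof of Theorem \ref{K-concavity} with the sharpened Riccati computation in the Remark following Theorem \ref{EDIKM}. What the paper's route buys is the conceptual link advertised in Section 6 (Theorem \ref{ECPKN} is explicitly offered as an ``alternative proof via the NIW formula,'' tying entropy power concavity to $W$-entropy monotonicity and Fisher information); what your route buys is brevity and economy of hypotheses, since it needs nothing beyond the Section 4 dissipation results. Two further points in your favor: you correctly carry the weight $u$ in the term $-2K\int_X u|\nabla\log u|_w^2\,d\mu$ (the paper's statement drops the $u$, evidently a typo), and your treatment of the equality case is more careful than the paper's ``if and only if'' -- as you note, the inequality chain only forces the variance to vanish when the left-hand side does, while the converse would require saturation of \eqref{H2} itself.
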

\begin{proof} Note that
	\begin{align*}
		H_{N,K}(u)&=H_N+\frac{N}{2}Kt\Big(1-\frac{1}{6}Kt\Big),
			\end{align*}
			and
	\begin{align*}
		W_{N,K}(u)&=\frac{d}{dt}(tH_{N,K})\\
		&=H_{N,K}+t\frac{d}{dt}H_{N,K}\\&=H_N+\frac{N}{2}Kt\Big(1-\frac{1}{6}Kt\Big)+t\frac{d}{dt}H_{N}+t\frac{d}{dt}\left[\frac{N}{2}Kt\Big(1-\frac{1}{6}Kt\Big)\right]\\
		&=H_N+{1\over t}\frac{d}{dt}H_N+NKt-\frac{1}{4}NK^2t^2.
	\end{align*}
	This yields
	\begin{align*}
		\frac{d}{dt}W_{N,K}(u)=\frac{d}{dt}W_{N}(u)+NK-\frac{1}{2}NK^2t,
	\end{align*}
	and
	\begin{align*}
		\frac{1}{t}\frac{d}{dt}W_{N,K}(u)=\frac{1}{t}\frac{d}{dt}W_{N}+\frac{NK}{t} -\frac{1}{2}NK^2.
	\end{align*}By the entropy dissipation formulas, we have
		\begin{eqnarray}
			\frac{d}{dt}W_N(u)&=&tH''+2H'-\frac{N}{2t}\nonumber\\
			&=&-2t\int_X {\bf\Gamma}_2(\nabla\log u,\nabla \log u )ud\mu+2\int_X\|\nabla\log u \|^2_w ud\mu-\frac{N}{2t}\nonumber\\
			&=& -2t\int_X \Big[{\bf\Gamma}_2(\nabla\log u,\nabla \log u )+\frac{\Delta \log u}{t}+\frac{N}{4t^2} \Big]ud\mu. \label{W-Gamma2}
	\end{eqnarray}	
	When $X=M$ is a Riemannian manifold, the Bochner identity formula implies	
		\begin{eqnarray}
\frac{d}{dt}W_N(u)
			&=&-2t\int_X \Big[\|\nabla^2 \log u \|^2_{\operatorname{HS}}+{\bf Ric}(\nabla\log u,\nabla \log u ) +\frac{\Delta \log u}{t}+\frac{N}{4t^2} \Big]ud\mu\nonumber\\
			&=&-2t\int_X \Big[\|\nabla^2 \log u+\frac{1}{2t} \|^2_{\operatorname{HS}}+{\bf Ric}(\nabla\log u,\nabla \log u )  \Big]ud\mu.\label{W-Ni}
		\end{eqnarray}
		This is the $W$-entropy formula when $K=0$ due to Ni \cite{Ni1} and Li \cite{Li2012}. Moreover, when $K\neq 0$, we have
		\begin{eqnarray*}
		\frac{d}{dt}W_{N,K}(u)&=&\frac{d}{dt}W_{N}+NK-\frac{1}{2}NK^2t\\
		&=&-2t\int_X \Big[\|\nabla^2 \log u+\frac{1}{2t} \|^2_{\operatorname{HS}}+{\bf Ric}(\nabla\log u,\nabla \log u )  \Big]ud\mu+NK-\frac{1}{2}NK^2t\\
		&=&-2t\int_X \Big[\|\nabla^2 \log u+\frac{1}{2t} \|^2_{\operatorname{HS}}-{NK\over 2t}+{NK^2\over 4}+{\bf Ric}(\nabla\log u,\nabla \log u )  \Big]ud\mu\\
		&=&-2t\int_X \Big[\|\nabla^2 \log u+\frac{1}{2t}-{K\over 2}\|^2_{\operatorname{HS}}+K\Delta \log u+{\bf Ric}(\nabla\log u,\nabla \log u )  \Big]ud\mu\\
		&=&-2t\int_X \Big[\|\nabla^2 \log u+\frac{1}{2t}-{K\over 2}\|^2_{\operatorname{HS}}+({\bf Ric}-K)(\nabla\log u,\nabla \log u )  \Big]ud\mu.
	\end{eqnarray*}
This is the $W$-entropy formula when $K\neq 0$ due to Li-Xu \cite{LiXu} and Li-Li \cite{LiLi2015PJM, LiLi2018JFA}.

	Next, the NIW formula implies
	\begin{equation*}\label{N1}
			\begin{split}
				\frac{N}{2\mathcal{N}}\frac{d^2\mathcal{N}}{dt^2}
				&=\frac{2}{N}\Big(I-\frac{N}{2t}\Big)^2+\frac{1}{t}\frac{dW_{N}}{dt}\\
				&= \frac{2}{N}\Big(I-\frac{N}{2t}\Big)^2	-2\int_X \Big[\|\nabla^2 \log u+\frac{1}{2t} \|^2_{\operatorname{HS}}+{\bf Ric}(\nabla\log u,\nabla \log u \Big]u d\mu	\\
				&=\frac{2}{N}I^2-2\int_X \Big[\|\nabla^2\log u \|^2_{\operatorname{HS}} +{\bf Ric}(\nabla\log u,\nabla \log u )\Big]u d\mu\\
				&=\frac{2}{N}\Big(\int_X u\Delta\log u d\mu\Big)^2-2\int_X \Big[{\bf\Gamma}_2(\nabla\log u,\nabla \log u )\Big]ud\mu.
							\end{split}
		\end{equation*}
		This yields, if the CD$(0, N)$-condition holds, i.e., the Bochner inequality formula holds\begin{eqnarray}
{\bf\Gamma}_2(\nabla \log u, \nabla \log u) \geq {1\over N}|\Delta \log u|^2,\ \ \ \label{BKNK=0}
\end{eqnarray}
we have
\begin{align*}
	\frac{N}{2\mathcal{N}}\frac{d^2\mathcal{N}}{dt^2}&\leq \frac{2}{N} \Big(\int_X u\Delta\log u d\mu\Big)^2-\frac{2}{N}\int_X(\Delta\log u )^2u d\mu\\
	&=-\frac{2}{N}\int_X \Big[\Delta \log u-\int_X u\Delta \log u d\mu \Big]^2 ud\mu.
\end{align*}

In general, for $K\neq 0$, we have
\begin{align*}
\frac{N}{2\mathcal{N}}\Big[\frac{d^2\mathcal{N}}{dt^2}+2K\frac{d\mathcal{N}}{dt}\Big]
&=\frac{N}{2\mathcal{N}}\frac{d^2\mathcal{N}}{dt^2}+{NK\over\mathcal{N}}\frac{d\mathcal{N}}{dt}\\
&=\frac{2}{N}\Big(\int_X u\Delta\log u d\mu\Big)^2-2\int_X \Big[{\bf\Gamma}_2(\nabla\log u,\nabla \log u )-K|\nabla\log u |_w^2\Big]ud\mu
\end{align*}
This yields, if the CD$(K, N)$-condition holds, i.e., the Bochner inequality formula holds\begin{eqnarray}
{\bf\Gamma}_2(\nabla \log u, \nabla \log u) \geq {1\over N}|\Delta \log u|^2+K|\nabla \log u|^2_w,\ \ \ \label{BKN3}
\end{eqnarray}
we have
\begin{align*}
	\frac{N}{2\mathcal{N}}\Big[\frac{d^2\mathcal{N}}{dt^2}+2K\frac{d\mathcal{N}}{dt}\Big] 
	\leq -\frac{2}{N}\int_X \Big[\Delta \log u-\int_X u\Delta \log u d\mu \Big]^2 ud\mu.
\end{align*}


	\end{proof}

\section{Shannon entropy isoperimetric inequality on RCD$(0,N)$ spaces}

In this section, as an application of the concavity of the Shannon entropy power, 
we prove the Shannon entropy isoperimetric inequality and the Stam logarithmic Sobolev inequality on RCD$(0,N)$ spaces with maximal volume condition, which extend the previous results due to Li-Li \cite{LiLi2024TMJ} on complete Riemannian manifold with CD$(0,N)$ condition and maximal volume condition.

\begin{theorem}\label{EII}
	{Let $(X, d, \mu)$ be an 
 RCD$(0,N)$ space satisfying the maximal volume growth condition: there exists a constant $ C_N> 0$ such that}
		\begin{equation*}
		\mu(B(x,r))\geq C_N r^N,\quad \forall x \in X,\ r>0.
	\end{equation*}
	Then the Shannon entropy isoperimetric inequality  holds
		\begin{equation}\label{deng}
		I(f)\mathcal{N}(f)\geq\gamma_N:=2\pi Ne\kappa^{\frac{2}{N}},
	\end{equation}
	where
	\begin{equation*}
		\kappa:=\lim_{r\rightarrow\infty}\frac{\mu(B(x,r))}{\omega_N r^N}
	\end{equation*}
	is the asymptotic volume ratio on $(X, d, \mu)$,  and $\omega_N$ denotes the volume of the unit ball in $\mathbb{R}^N$. Equivalently, under the above condition, the Stam logarithmic Sobolev inequality holds: for all nice $f
	>0$ with $\int_X {|\nabla f |^2_w \over f}d\mu<+\infty$, it holds
\begin{equation*}
	\int_X f\log f d\mu \leq \frac{N}{2}\log\Big(\frac{1}{\gamma_N}\int_X {|\nabla f |^2_w\over f} d\mu\Big).
\end{equation*}
\end{theorem}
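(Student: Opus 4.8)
The plan is to run the heat flow issuing from $f$ and combine the concavity of the Shannon entropy power with the sharp large-time behaviour of the entropy. Normalizing so that $\int_X f\,d\mu=1$, let $u(t)$ solve $\partial_t u=\Delta u$ with $u(0)=f$ and set $\mathcal{N}(t)=\mathcal{N}(u(t))=e^{2H(u(t))/N}$. By the first order dissipation formula \eqref{H1} together with Lemma \ref{Fisher} one has $H'(t)=I(u(t))$, hence $\mathcal{N}'(t)=\tfrac{2}{N}I(u(t))\mathcal{N}(t)$ and in particular $I(f)\mathcal{N}(f)=\tfrac{N}{2}\mathcal{N}'(0)$. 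Thus the claimed isoperimetric bound \eqref{deng} is equivalent to $\mathcal{N}'(0)\ge\tfrac{2}{N}\gamma_N=4\pi e\,\kappa^{2/N}$, and the whole problem is reduced to a lower bound on the initial slope of $\mathcal{N}$.

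Next I would exploit concavity. By Corollary \ref{EKSN} the function $\mathcal{N}$ is concave on $(0,\infty)$, so $\mathcal{N}'$ is non-increasing and $\mathcal{N}'(0)\ge L:=\lim_{t\to\infty}\mathcal{N}'(t)$. Moreover the Fisher information bound \eqref{IKbound0}, namely $I(u(t))\le N/(2t)$, gives $\mathcal{N}'(t)\le\mathcal{N}(t)/t$, i.e. $\tfrac{d}{dt}\log(\mathcal{N}(t)/t)\le0$, so $t\mapsto\mathcal{N}(t)/t$ is non-increasing; together with concavity this identifies $L=\lim_{t\to\infty}\mathcal{N}(t)/t$. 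Consequently it suffices to prove the asymptotic lower bound $L\ge4\pi e\,\kappa^{2/N}$, equivalently $H(u(t))\ge\tfrac{N}{2}\log(4\pi t)+\tfrac{N}{2}+\log\kappa-o(1)$ as $t\to\infty$. I expect this asymptotic to be the main obstacle.

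The heuristic behind the constant is that as $t\to\infty$ the profile $u(t)$ resembles the Gaussian $g_t(y)=Z(t)^{-1}e^{-d(x_0,y)^2/4t}$, where $Z(t)=\int_X e^{-d(x_0,y)^2/4t}\,d\mu(y)$. Using the layer-cake formula and the maximal volume growth with asymptotic volume ratio $\kappa$, a direct computation yields $Z(t)\sim\kappa(4\pi t)^{N/2}$, while the second moment obeys $\int_X d(x_0,y)^2\,u(t)\,d\mu\sim2Nt$ — the upper estimate coming from the Laplacian comparison $\Delta d^2(\cdot,x_0)\le2N$ valid on RCD$(0,N)$, and the matching lower estimate from the Gaussian lower heat-kernel bound. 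Inserting the two-sided Gaussian estimates of \cite{JLZ} into $H(u(t))=-\int_X u\log u\,d\mu$ then controls both $\limsup$ and $\liminf$ of $H(u(t))-\tfrac{N}{2}\log(4\pi t)$, yielding $L=4\pi e\,\kappa^{2/N}$. The delicate point, and the crux of the argument, is that \cite{JLZ} furnishes Gaussian bounds only with non-sharp constants; pinning down the sharp constant requires matching $u(t)$ to the Gaussian on the metric cone arising as the tangent cone at infinity (itself an RCD$(0,N)$ cone), via parabolic rescaling and continuity of heat kernels and entropies under pointed measured Gromov--Hausdorff convergence. I would treat this sharp two-sided large-time asymptotic as the technical heart of the proof.

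Finally I would record the equivalence with the Stam inequality, which is purely algebraic. For a probability density $f$, writing $H(f)=-\int_X f\log f\,d\mu$ and $I(f)=\int_X|\nabla f|_w^2/f\,d\mu$, the bound $I(f)\mathcal{N}(f)\ge\gamma_N$ reads $e^{2H(f)/N}\ge\gamma_N/I(f)$; taking logarithms and using $-H(f)=\int_X f\log f\,d\mu$ gives precisely $\int_X f\log f\,d\mu\le\tfrac{N}{2}\log\big(\tfrac{1}{\gamma_N}\int_X|\nabla f|_w^2/f\,d\mu\big)$, which is the Stam type logarithmic Sobolev inequality. The two statements are therefore equivalent, and the unnormalized case reduces to the normalized one by the standard scaling $f\mapsto cf$.
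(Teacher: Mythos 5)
Your reduction skeleton is correct and coincides with the paper's: using $H'=I$ you get $I(f)\mathcal{N}(f)=\tfrac{N}{2}\mathcal{N}'(0)$, the concavity of $\mathcal{N}$ (Corollary \ref{EKSN}) gives $\mathcal{N}'(0)\geq\lim_{t\to\infty}\mathcal{N}'(t)$, the Fisher bound \eqref{IKbound0} and L'Hospital identify this limit with $\lim_{t\to\infty}\mathcal{N}(t)/t=4\pi e\,\lim_{t\to\infty}e^{\frac{2}{N}H_N(u(t))}$, and the algebraic equivalence with the Stam inequality at the end is fine. However, there is a genuine gap exactly where you place the ``technical heart'': you never actually prove the sharp large-time asymptotic $\lim_{t\to\infty}H_N(u(t))=\log\kappa$. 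As you yourself concede, the two-sided Gaussian bounds of \cite{JLZ} carry constants $C_1(\varepsilon),C_2(\varepsilon)$ that do not tend to $1$, so inserting them into $-\int u\log u\,d\mu$ only traps $H_N(u(t))$ in a window of fixed positive width around $\log\kappa$; this can never yield the sharp constant $\gamma_N=2\pi Ne\kappa^{2/N}$, only a non-sharp multiple of it. The fallback you sketch (parabolic rescaling, tangent cone at infinity, continuity of heat kernels and entropies under pointed measured Gromov--Hausdorff convergence) is precisely the hard analytic content, and it is left as a heuristic: entropy is not automatically continuous under pmG convergence (one needs uniform Gaussian decay to justify passing $-\int u\log u$ to the limit), the asymptotic cone need not be unique, and your argument also silently treats $u(t)=P_tf$ for general $f$ as if it were the heat kernel itself, without any mechanism for transferring the asymptotic from $p_t(o,\cdot)$ to $P_tf$.

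The paper closes this gap with two specific ingredients that your proposal lacks. First, instead of estimating $H(P_tf)$ directly, it reduces to the heat-kernel case: the HWI-type inequality from Theorem 3.28 of \cite{EKS2015} (with $K=0$), $H(\mu_1)-H(\mu_0)\leq\sqrt{I(\mu_0)}\,W_2(\mu_0,\mu_1)$, combined with the Wasserstein contraction $W_2(P_tf\,d\mu,P_th\,d\mu)\leq W_2(f\,d\mu,h\,d\mu)$ and the bound $I(P_tf)\leq \tfrac{N}{2t}$ of Theorem \ref{Ibound}, gives
\begin{equation*}
\bigl|H(P_tf)-H(p_t(o,\cdot))\bigr|\;\leq\;\sqrt{\tfrac{N}{2t}}\;W_2\bigl(f\,d\mu,\delta_o\bigr)\;\longrightarrow\;0,
\end{equation*}
so the limit of $H_N(P_tf)$ equals that of $H_N(p_t)$ for any admissible initial datum. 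Second, for the heat kernel itself it invokes H. Li's sharp entropy asymptotic $\lim_{t\to\infty}H_N(p_t)=\log\kappa$ on RCD$(0,N)$ spaces with maximal volume growth \cite{HLi}; this is a substantive theorem (it is essentially the rigorous version of your cone heuristic), not something recoverable from \cite{JLZ}. Unless you either import \cite{HLi} or supply a complete proof of the sharp asymptotic together with the reduction from $P_tf$ to $p_t$, your argument establishes \eqref{deng} only with a non-sharp constant, which is strictly weaker than the statement.
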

\begin{proof}
{The proof has been essentially given by Li-Li \cite{LiLi2024TMJ}. For the convenience of the reader,  we reproduce
 it here. We  know that, by Theorem \ref{EKSN} or Theorem \ref{ECPKN}, on RCD$(0,N)$ spaces, the concavity of entropy power holds:}
\begin{equation*}
	\frac{d^2}{dt^2}\mathcal{N}(u(t))\leq 0 .
\end{equation*}
 This means that
\begin{equation*}
	\frac{d}{dt}\mathcal{N}(u(t))
\end{equation*}
is non-increasing in $t$ and hence $\lim\limits_{t\rightarrow \infty}\frac{d}{dt}\mathcal{N}(u(t)) $ exists.

The first order entropy dissipation formula \eqref{H1} implies
\begin{equation*}
	\frac{d}{dt}\mathcal{N}(u(t))\leq \frac{d}{dt}\Big\vert_{t=0}\mathcal{N}(u(t))=\frac{2}{N}I(f)\mathcal{N}(f).
\end{equation*}
Thus
\begin{equation}
\frac{2}{n} I(f) \mathcal{N}(f) \geq \lim _{t \rightarrow \infty} \frac{d}{d t} \mathcal{N}(u(t)).
\end{equation}
Again, the first order entropy dissipation formula \eqref{H1} implies

\begin{equation}
\frac{d}{d t} \mathcal{N}(u(t))=\frac{2}{N} I(u(t)) \mathcal{N}(u(t)) \geq 0 .
\end{equation}
Thus $\mathcal{N}(u(t))$ is non-increasing in t. By the L'Hospital rule, we have
\begin{equation*}
	\lim_{t\rightarrow \infty}\frac{d}{dt}\mathcal{N}(u(t))=\lim_{t\rightarrow \infty}\frac{\mathcal{N}(u(t))}{t}. 
\end{equation*}
Let
\begin{equation*}
	H_N(u(t))=H(u(t))-\frac{N}{2}\log (4\pi e t).
\end{equation*}
 Then
\begin{equation*}
	\frac{\mathcal{N}(u(t))}{t}=(4\pi e) e^{\frac{2}{N}H_N(u(t))}.
\end{equation*} 

By \cite{KL}, we know that
$$
\frac{d}{d t} H_N(u(t)) \leq 0.
$$

Thus the limit $\lim\limits_{t \rightarrow \infty} H_N(u(t))$ exists and we need only to prove $\lim\limits_{t \rightarrow \infty} H_N(u(t))$ is finite.

Let $P_t=e^{t \Delta}$, and $f, h $ such that $\int_X f d\mu=\int_X h d\mu=1$. Let $H\left(P_t f\right)=$ $-\int_M P_t f \log P_t f d v$, and $H\left(P_t h\right)=-\int_X P_t h \log P_t h d\mu$. 

By Theorem 3.28 \cite{EKS2015}, the CD$(K,N)$ condition means that
\begin{align*}
	\frac{U_N\left(\mu_1\right)}{U_N\left(\mu_0\right)} \leq \mathfrak{c}_{K / N}\left(W_2\left(\mu_0, \mu_1\right)\right)+\frac{1}{N} \mathfrak{s}_{K / N}\left(W_2\left(\mu_0, \mu_1\right)\right) \sqrt{I\left(\mu_0\right)} .
\end{align*}
For the defenition of $\mathfrak{c}_{K / N}$ and $\mathfrak{s}_{K / N}$, one can see Definition \ref{Def}.

In particular, when $K=0$, We have
\begin{align*}
	e^{-\frac{1}{N}(H(\mu_0)-H(\mu_1))}\leq 1+ \frac{W_2(\mu_0,\mu_1)\sqrt{I(\mu_0)}}{N}.
\end{align*}
That is to say
\begin{align*}
	H(\mu_1)-H(\mu_0)\leq N\log\left(1+\frac{W_2(\mu_0,\mu_1)\sqrt{I(\mu_0)}}{N}\right).\end{align*}
Using the inequality $\log(1+x)\leq x, \forall x>0$, we have
\begin{align*}
	H(\mu_1)-H(\mu_0)\leq  \sqrt{I(\mu_0)}W_2(\mu_0,\mu_1).
\end{align*}
Let $\mu_0=P_tf$ and $\mu_1=P_th$, we have 
\begin{equation}\label{HWI1}
	 H(P_tf) -H(P_th) \leq \sqrt{I(P_th)}W_2(P_tfd\mu,P_thd\mu).
\end{equation}
Exchanging  $P_tf$ and $P_th$ in \eqref{HWI1}, we have 
\begin{equation}\label{HWI2}
	 H(P_th) -H(P_tf) \leq \sqrt{I(P_tf)}W_2(P_tfd\mu,P_thd\mu).
\end{equation}
We obtain following estimation by combining \eqref{HWI1} and \eqref{HWI2}
\begin{equation*}
-\sqrt{I(P_tf)}W_2(P_tfd\mu,P_thd\mu)	\leq H(P_tf) -H(P_th)\leq \sqrt{I(P_th)}W_2(P_tfd\mu,P_thd\mu)
\end{equation*}
Thus
\begin{equation*}
\left|H\left(P_t f\right)-H\left(P_t h\right)\right| \leq \max \left\{ \sqrt{I(P_tf)}, \sqrt{I(P_th)}\right\} W_2\left(P_t f d\mu, P_t h d\mu\right),\label{LLEst1}
\end{equation*}                                                                                                     
By Ebar, Kuwada  and Sturm \cite{EKS2015}, we have the following inequality on RCD$(0, N)$ spaces
\begin{equation*}
	W_2\left(P_t f d\mu, P_t h d\mu\right) \leq W_2(f d\mu, h d\mu).\label{W2cont}
\end{equation*}
By Theorem \ref{Ibound}, on RCD$(0, N)$ we have
\begin{equation*}
I(P_tf)\leq \frac{N}{2 t}.\label{LLEst2}
\end{equation*}
Therefore

$$
\left|H\left(P_t f\right)-H\left(P_t h\right)\right| \leq \sqrt{\frac{N}{2 t}} W_2(f d\mu, h d\mu).$$
Taking an approximating sequence $h_k(x) d\mu(x)$ to the Dirac mass $\delta_o(\cdot)$, we have

$$
\left|H\left(P_t f\right)-H\left(p_t(o, \cdot)\right)\right| \leq \sqrt{\frac{N}{2 t}} W_2\left(f d\mu, \delta_o(\cdot)\right).
$$

 Hence
\begin{equation*}
	\lim_{t\rightarrow\infty}(H(u(t))-H(p_t))=0.
\end{equation*}

 On the other hand, H. Li \cite{HLi} proved that  on RCD$(0, N)$ 
 spaces it holds

\begin{equation*}
	\lim_{t\rightarrow \infty} H_N(p_t)= \log \kappa .
\end{equation*}

{Consequently, we obtain}
\begin{equation*}
	I(f)\mathcal{N}(f)\geq\gamma_N:=2\pi Ne\kappa^{\frac{2}{N}},
\end{equation*}
which is equivalent to the Stam type Log-Sobolev inequality: for all $f
	>0$ with $\int_X {|\nabla f |^2_w \over f}d\mu<+\infty$, it holds
\begin{equation}
	\int_X f\log f d\mu \leq \frac{N}{2}\log\Big(\frac{1}{\gamma_N}\int_X{ |\nabla f |^2_w\over f} d\mu\Big). 
	\label{stamLogS}
\end{equation}
Replacing $f$ by $f^2$ with $\int_M f^2 d v=1$ and $\int_X|\nabla f|^2 d\mu<\infty$, the above inequality is equivalent to the following Stam type Log-Sobolev inequality: for all nice $f\neq 0$ with $\int_X |\nabla f |^2_w d\mu<+\infty$, it holds
\begin{equation*}
	\int_X f^2 \log f^2 d\mu \leq \frac{N}{2} \log \left(\frac{4}{\gamma_N} \int_X|\nabla f|^2 d\mu\right).
\end{equation*}

\end{proof}

\section{Rigidity theorem  on non-collapsing RCD$(0, N)$ space}

In Remark 8.3 in Li-Li \cite{LiLi2024TMJ}, S. Li and Li pointed out the following

\begin{remark}\label{rem8.3}{
Note that when $M=\mathbb{R}^N$, $\kappa=1$, the Stam  logarithmic  Sobolev inequality reads 
\begin{eqnarray}
\mathcal{N}(f)I(f)\geq 2\pi eN. \label{NXY5}
\end{eqnarray}
Indeed, the  Entropy Isoperimetric Inequality $(\ref{NXY5})$  is equivalent to the following  logarithmic Sobolev inequality which was first proved by 
Stam \cite{Stam}: for any smooth probability density function $f$ on $\mathbb{R}^N$ with $\int_{\mathbb{R}^N} {|\nabla f|^2\over f} dx<\infty$, it holds
 \begin{eqnarray}\label{StamLSI}
\int_{\mathbb{R}^N} f\log f dx\leq {N\over 2}\log \left({1\over 2\pi e N} \int_{\mathbb{R}^N}{|\nabla f|^2\over f}dx\right). \label{LSI1}
\end{eqnarray}
Replacing $f$ by $f^2$ with $\int_{\mathbb{R}^N} f^2dx=1$,  the above LSI   is equivalent to the Stam logarithmic Sobolev inequality: for any smooth $f$ on $\mathbb{R}^N$ with $\int_{\mathbb{R}^N} f^2dx=1$ and $\int_{\mathbb{R}^N} |\nabla f|^2 dx<\infty$, it holds

\begin{eqnarray*}
\int_{\mathbb{R}^N} f^2\log f^2 dx\leq {N\over 2}\log \left({2\over \pi e N} \int_{\mathbb{R}^N} |\nabla f|^2dx\right). 
\end{eqnarray*}
In the case where $M$ is an $N$-dimensional complete Riemannian manifold with 
non-negative Ricci curvature on which the Stam logarithmic Sobolev inequality $(\ref{stamLogS})$ 
holds with the same constant $\gamma_N=2\pi e N$ as on the $N$-dimensional Euclidean space,  then $M$ must be  isometric to $\mathbb{R}^N$. See \cite{BCL, Ni1}.}\end{remark}

In this section, inspired by the above remark, we will prove the rigidity theorem for the Stam  logarithmic  Sobolev inequality with the sharp constant on non-collapsing RCD$(0, N)$ spaces. 

\medskip

We first recall the Bishop-Gromov inequality on metric measure spaces  \cite{Sturm2006}. It says that, for $0<r<R$ and $x \in X$,

\begin{equation}\label{BG}
\frac{\mu(B(x,R))}{\mu(B(x,r))} \leq\left(\frac{R}{r}\right)^N.
\end{equation}

Moreover, Gigli and De Philippis  \cite{Gigli2016} studies the case when equality holds in \eqref{BG} . To state their result, we require the following definition:
\begin{definition}\cite{Gigli2016}
 A metric measure space $(X, d, \mu)$ is a $(0, N-1)$-cone built over a metric measure space $\left(Y, d_Y, \mu_Y\right)$ if the following holds:
 
(i) $X=[0, \infty) \times Y /\{0\} \times Y$,

(ii) $d([r, y],[s, z])=r^2+s^2-2 r s \cos \left(d_Y(y, z) \wedge \pi\right)$,

(iii) $\mu(\mathrm{d} r \mathrm{~d} y)=r^{N-1} \mathrm{~d} r \mu_Y(\mathrm{~d} y)$.
	
\end{definition}
\begin{theorem}\cite{Gigli2016}
	Let $x \in X$. Suppose that the equality holds in \eqref{BG} for any $0<r<R$.
	
(i) If $N \geq 2$, then $(X, d, \mu)$ is $(0, N-1)$-cone over an $\operatorname{RCD}(N-2, N-1)$ space and $x$ is the vertex of the cone.

(ii) If $N<2$, then $(X, d, \mu)$ is isomorphic to either $\left([0, \infty), d_{\text {Eucl }}, x^{N-1} \mathrm{~d} x\right)$ or  $\left(\mathbb{R}, d_{\mathrm{Eucl}},|x|^{N-1} \mathrm{~d} x\right)$, where $d_{\text {Eucl }}$ is the canonical Euclidean distance. 

In both cases, $x \in X$ corresponds to 0 by the isomorphism.
\end{theorem}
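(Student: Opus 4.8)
The plan is to prove this ``volume cone implies metric cone'' rigidity by translating equality in the Bishop--Gromov inequality \eqref{BG} into a rigidity statement for the second-order calculus of the function $u:=\tfrac12 d^2(\cdot,x)$, and then to reconstruct the cone from the resulting Hessian identity. Fix $x$ and set $r:=d(\cdot,x)$. First I would invoke the Laplacian comparison on RCD$(0,N)$ spaces, which gives $\Delta u\le N$ in the sense of measures (equivalently $\Delta r\le\frac{N-1}{r}$). Expressing $\mu(B(x,r))$ through the monotone quantity $r\mapsto r^{-N}\mu(B(x,r))$ governed by this comparison, equality in \eqref{BG} for all $0<r<R$ is equivalent to the saturated identity $\Delta u=N$ as measures---exactly the rigidity $\Delta d^2(\cdot,x)=2N$ already encountered in Theorem \ref{WnNK=0}. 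At the same time the eikonal identity $|\nabla u|_w^2=2u$ (i.e. $|\nabla r|_w=1$ $\mu$-a.e.) holds since $r$ is a distance function.

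The crucial step is to upgrade these scalar identities to the full Hessian identity $\nabla^2 u={\bf g}$ $\mu$-a.e. Because $|\nabla u|_w^2=2u$ and $\Delta u=N$ is a constant, the definition of ${\bf \Gamma_2}$ gives
\begin{equation*}
{\bf \Gamma_2}(u,u)=\tfrac12{\bf \Delta}|\nabla u|^2-\langle\nabla u,\nabla\Delta u\rangle\,\mu=\tfrac12{\bf \Delta}(2u)=N\,\mu,
\end{equation*}
so the weak Bochner inequality \eqref{BE}, in its dimensional form ${\bf \Gamma_2}(u,u)\ge\frac1N|\Delta u|^2\mu=N\,\mu$, is saturated. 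I would then run the self-improvement and rigidity of the Bochner inequality within Gigli's second-order calculus, using the trace structure of $\nabla^2 u$ of Han \cite{Han2018A,Han2018B} and the constancy of the geometric dimension $n$ of Bru\`e--Semola \cite{BS}: saturation forces the traceless part $\nabla^2 u-\frac{{\rm Tr}\nabla^2 u}{n}{\bf g}$ and the dimensional defect $\Delta u-{\rm Tr}\nabla^2 u$ to be extremal, which pins down ${\rm Tr}\nabla^2 u=n$ and hence $\nabla^2 u={\bf g}$. This is the main obstacle: in the singular setting one cannot differentiate geodesics directly, so the passage from the scalar Laplacian rigidity to the pointwise Hessian identity must be carried out entirely within the measure-valued ${\bf \Gamma_2}$-calculus and its self-improvement.

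Once $\nabla^2 u={\bf g}$ and $|\nabla u|_w^2=2u$ are available, I would reconstruct the cone. The identity $\nabla^2 u={\bf g}$ says the gradient flow of $u$ dilates the metric linearly, so that reparametrizing the flow of $\nabla u$ by the radial variable $r=\sqrt{2u}$ produces a bijection $\Phi\colon X\setminus\{x\}\to(0,\infty)\times Z$ onto the cone over the cross-section $Z:=\{r=1\}=\partial B(x,1)$, endowed with the induced metric $d_Z$ and measure $\mu_Z$. Tracking how $\Phi$ transports $d$ and $\mu$ yields precisely the cone relations
\begin{equation*}
d([r,y],[s,z])^2=r^2+s^2-2rs\cos\!\big(d_Z(y,z)\wedge\pi\big),\qquad \mu(\mathrm{d}r\,\mathrm{d}y)=r^{N-1}\,\mathrm{d}r\,\mu_Z(\mathrm{d}y),
\end{equation*}
so $(X,d,\mu)$ is the $(0,N-1)$-cone over $(Z,d_Z,\mu_Z)$ with vertex $x$.

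Finally I would identify the cross-section and split into the two cases. By the characterization of RCD cones (a $(0,N-1)$-cone is RCD$(0,N)$ if and only if its base is RCD$(N-2,N-1)$ with diameter at most $\pi$), the cross-section $(Z,d_Z,\mu_Z)$ is an RCD$(N-2,N-1)$ space when $N\ge 2$, which is case (i). When $N<2$ the cross-section has dimension below $1$ and the cone degenerates to radial rays only: the reconstruction then identifies $(X,d,\mu)$ with $([0,\infty),d_{\mathrm{Eucl}},x^{N-1}\,\mathrm{d}x)$, or with its symmetric doubling $(\mathbb{R},d_{\mathrm{Eucl}},|x|^{N-1}\,\mathrm{d}x)$ when two rays emanate from $x$, the vertex $x$ corresponding to $0$; this is case (ii). The reconstruction from the Hessian equation $\nabla^2 u={\bf g}$ and the cone-characterization of the base are the structural core of the argument, following the warped-product analysis of RCD spaces.
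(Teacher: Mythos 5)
This theorem is imported by the paper verbatim from Gigli--De Philippis \cite{Gigli2016} and used as a black box in the proof of Theorem \ref{Rigidity}; the paper contains no proof of it, so your proposal can only be compared with the original source. Measured against that, your sketch reproduces the correct skeleton of the actual argument: equality in \eqref{BG} is converted, via the Laplacian comparison $\Delta\frac{d^2(\cdot,x)}{2}\le N$ and a Gauss--Green computation, into the saturated identity $\Delta u=N$ as measures for $u=\frac{1}{2}d^2(\cdot,x)$; combined with the eikonal identity $|\nabla u|_w^2=2u$ this saturates the dimensional Bochner inequality, and the self-improvement of the measure-valued ${\bf \Gamma_2}$ (your trace optimization is right: writing $T={\rm Tr}\nabla^2u$, equality in $\frac{T^2}{n}+\frac{(N-T)^2}{N-n}\ge N$ forces $T=n$, whence $\nabla^2u={\bf g}$) is exactly the mechanism in \cite{Gigli2016}; the cone is then rebuilt from the gradient flow of $u$, and the base is identified by the cone characterization you invoke, which is Ketterer's theorem (a $(0,N-1)$-cone is RCD$(0,N)$ if and only if its base is RCD$(N-2,N-1)$ with diameter at most $\pi$) --- note that this reference is not in the paper's bibliography and should be named explicitly.

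Three places where your sketch compresses the genuinely hard content deserve flagging. First, $u$ is unbounded, not in $W^{1,2}$, and $\Delta u$ is a priori only a measure, so neither the identity ${\bf \Gamma_2}(u,u)=N\mu$ nor the self-improvement can be applied to $u$ directly; \cite{Gigli2016} must localize with good cut-off functions and truncations, and this occupies a substantial portion of their paper. Second, the passage from $\nabla^2u={\bf g}$ to the metric cone formula is the deepest step: in the nonsmooth setting there is no exponential map, and the dilation flow must be built through the EVI/regular-Lagrangian-flow machinery, with the cone distance $d([r,y],[s,z])^2=r^2+s^2-2rs\cos(d_Z(y,z)\wedge\pi)$ verified by a delicate argument rather than by ``tracking how $\Phi$ transports $d$'' as a one-liner; the same goes for the $N<2$ degeneration into one or two rays. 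Third, in \cite{Gigli2016} the hypothesis is local (equality of ratios up to a radius $R$) and the conclusion correspondingly local; your global reading matches the transcription in this paper and its application (where $\mu(B(x,r))=\omega_N r^N$ for all $r$), but the quoted result strictly requires that localization. These are gaps of technical execution, not of strategy: as a blueprint your proposal is faithful to the cited proof.
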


We now state the last main result of this paper. 

\begin{theorem}\label{Rigidity}
	Let $(X, d, \mu)$ be a non-collapsing  RCD$(0,N)$ space and the Stam logarithmic Sobolev inequality \eqref{StamLSI}
holds with the sharp constant $\gamma_N=2\pi e N$ as on the $N$-dimensional Euclidean space. Then 

(i) If $N \geq 2$,  $(X, d, \mu)$ is $(0, N-1)$-cone over an $\operatorname{RCD}(N-2, N-1)$ space and $x$ is the vertex of the cone.

(ii) If $N<2$,  $(X, d, \mu)$ is isomorphic to either $\left([0, \infty), d_{\text {Eucl }}, x^{N-1} \mathrm{~d} x\right)$ or  $\left(\mathbb{R}, d_{\mathrm{Eucl}},|x|^{N-1} \mathrm{~d} x\right)$, where $d_{\text {Eucl }}$ is the canonical Euclidean distance. 
\end{theorem}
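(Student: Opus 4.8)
The plan is to turn the sharp-constant hypothesis into the single numerical fact $\kappa=1$, to combine this with non-collapsing and the Bishop--Gromov inequality so as to force equality in \eqref{BG} at every scale, and then to quote the structure theorem of De Philippis--Gigli \cite{Gigli2016}. First I would read off $\kappa$: by Theorem \ref{EII}, the optimal constant in the entropy isoperimetric inequality $I(f)\mathcal{N}(f)\ge\gamma_N$ (equivalently, in the Stam logarithmic Sobolev inequality) is $\gamma_N=2\pi N e\,\kappa^{2/N}$, where $\kappa=\lim_{r\to\infty}\mu(B(x,r))/(\omega_N r^N)$ is the asymptotic volume ratio. The assumption that the sharp constant equals $2\pi eN$ thus reads $2\pi N e\,\kappa^{2/N}=2\pi eN$, i.e. $\kappa=1$; in particular $\kappa>0$, so the space enjoys maximal volume growth and Theorem \ref{EII} genuinely applies.

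Next I would invoke the monotone form of Bishop--Gromov. Fixing $x\in X$ and setting $V(r):=\mu(B(x,r))/(\omega_N r^N)$, the inequality \eqref{BG} is exactly the statement that $V$ is non-increasing in $r$. The non-collapsing condition \eqref{Noncollapsing} gives $\lim_{r\to 0^+}V(r)=1$, while the first step gives $\lim_{r\to\infty}V(r)=\kappa=1$. Since $V$ is non-increasing, for every $r>0$ one has $V(r)\le\lim_{s\to 0^+}V(s)=1$ and $V(r)\ge\lim_{s\to\infty}V(s)=1$, so $V\equiv 1$. Equivalently, equality holds in \eqref{BG} for all $0<r<R$.

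Finally I would apply the rigidity theorem of De Philippis--Gigli \cite{Gigli2016}: on an RCD$(0,N)$ space, equality in \eqref{BG} for all $0<r<R$ forces, when $N\ge 2$, that $(X,d,\mu)$ is a $(0,N-1)$-cone over an RCD$(N-2,N-1)$ space with $x$ its vertex, and, when $N<2$, that $(X,d,\mu)$ is isomorphic to $([0,\infty),d_{\mathrm{Eucl}},x^{N-1}\,dx)$ or $(\mathbb{R},d_{\mathrm{Eucl}},|x|^{N-1}\,dx)$. This is precisely the dichotomy (i)--(ii). The genuinely essential step is the pinching in the second paragraph: neither hypothesis alone suffices, and it is the combination of non-collapsing at scale $0$ and $\kappa=1$ at scale $\infty$, mediated by the monotonicity of $V$, that upgrades an inequality into the scale-invariant equality needed to trigger \cite{Gigli2016}; the geometric content is then carried entirely by that cited rigidity theorem.
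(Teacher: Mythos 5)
Your proposal is correct and follows essentially the same route as the paper: convert the sharp-constant hypothesis into $\kappa\geq 1$, pinch the Bishop--Gromov ratio $V(r)=\mu(B(x,r))/(\omega_N r^N)$ between its limit $1$ at scale $0$ (non-collapsing) and its limit $\kappa$ at scale $\infty$ to force equality in \eqref{BG}, and then invoke the volume-cone rigidity theorem of De Philippis--Gigli \cite{Gigli2016}. One caveat on your first paragraph: Theorem \ref{EII} only proves that the inequality $I(f)\mathcal{N}(f)\geq 2\pi Ne\kappa^{2/N}$ \emph{holds}; it does not assert that this constant is optimal, and it is precisely the sharpness direction (optimal constant $\leq 2\pi Ne\kappa^{2/N}$) that converts the hypothesis into $\kappa\geq 1$. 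For that step the paper cites Theorem 1 of Balogh--Krist\'aly--Tripaldi \cite{BKT}; you should cite that result (or supply an equivalent heat-kernel test-function argument) rather than attribute the optimality to Theorem \ref{EII}.
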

\begin{proof} By Theorem \ref{EII}, if the Stam logarithmic Sobolev inequality \eqref{StamLSI} holds with the sharp constant $\gamma_N=2\pi e N$, then 
$$2\pi e N\kappa^{N/2}\geq 2\pi eN.$$
By Theorem 1 in \cite{BKT} , $\gamma_N=2\pi eN \kappa^{\frac{N}{2}}$ is the sharp constant such that the LSI hols. The logarithmic Sobolev inequality \eqref{StamLSI} holds with the same constant $\gamma_N=2\pi e N$ means that
	\begin{equation}
		\frac{1}{2\pi eN \kappa^{\frac{N}{2}}}\leq \frac{1}{2\pi eN},
	\end{equation}
which is 
\begin{equation}\label{kappa}
	\kappa^{\frac{N}{2}}=\lim_{R\rightarrow\infty}\frac{\mu(B(x, R))}{\omega_N R^N}\geq 1,
\end{equation}
where $\omega_N$ denotes the volume of the unit ball in $\mathbb{R}^N$.

By the Bishop-Gromov inequality in \cite{Sturm2006}, for $r<R$, 
we have 
\begin{equation*}
	\frac{\mu(B(x,R))}{\mu(B(x,r))} \leq\left(\frac{R}{r}\right)^N.
\end{equation*}
Thus\begin{equation}\label{BGIneq}
	\frac{\mu(B(x,R))}{\omega_N R^N} \leq \frac{\mu(B(x,r))}{\omega_N r^N}.
\end{equation}
Under the non-collapsing condition on RCD$(0, N)$, we have
\begin{equation}\label{Noncollapsing1}
	\lim\limits_{r\rightarrow 0} \frac{\mu(B(x,r))}{\omega_N r^N}=1, \ \ \forall ~ x\in X.
\end{equation}
Hence 
\begin{equation}\label{Noncollapsing2}
\mu(B(x, R)) \leq \omega_N R^N\ \ \ \ \forall~ x\in X,\ R>0. 
\end{equation}
On the other hand, taking the limit $R\rightarrow \infty$ in $(\ref{BGIneq})$ and using \eqref{kappa}, we see that for any $r>0$, we have 
\begin{equation}\label{volume}
	1\leq \lim\limits_{R\rightarrow \infty} \frac{\mu(B(x,R))}{\omega_N R^N}\leq \frac{\mu(B(x,r))}{\omega_N r^N}, \ \ \forall ~ x\in X.
\end{equation}
Therefore 
\begin{equation}\label{Noncollapsing3}
\mu(B(x, r)) \leq \omega_N r^N\ \ \ \ \forall~ x\in X,\ r>0. 
\end{equation}
Combining $(\ref{Noncollapsing2})$ with $(\ref{Noncollapsing3})$, we have 
\begin{equation}\label{Noncollapsiing3}
\mu(B(x, r)) =\omega_N r^N\ \ \ \ \forall~ x\in X,\ r>0. 
\end{equation}
 By the volume rigidity theorem (Theorem 7.4), we complete the proof of Theorem \ref{Rigidity}.  
\end{proof}
%
%
%
%
%
%
%

To end this section, we would like to raise an 
interesting problem whether we can extend the rigidity theorems (Theorem \ref{WnNK=0}, Theorem \ref{Lirigidity}, Theorem \ref{KLRigidity} and Theorem \ref{Rigidity}) from CD$(0, N)$ Riemannian manifolds, RCD$(0, N)$ and RCD$(0, n, N)$ spaces to RCD$(K, N)$ and RCD$(K, n, N)$ spaces for $K\in \mathbb{R}$.

\section{Two other entropy inequalities on RCD spaces}\label{sect5}

%

In this section, we extend two other entropy formulas to the setting of RCD  spaces. 

\subsection{Extension of Wang's entropy formula to RCD spaces}

The  first  one is the following 
result which was first proved by Wang \cite{wang}.
\begin{theorem}[\cite{wang}] \label{Wang} Let $M$ be a closed Riemannian  manifold with $Ric\geq -K$, where 
	$K \geq 0$. Define
\begin{equation}\label{Wk}
W_K(u, \tau):=  \int_M\left[\frac{e^{2 K \tau}-1}{ 2K}|\nabla f|^2+f+\frac{n}{2}\left(\log \frac{2 K \tau}{e^{2 K \tau}-1}-{e^{2 K \tau}-1\over 2K}\right)-n\right]   \frac{e^{-f}}{(4 \pi \tau)^{\frac{n}{2}}} \mathrm{~d} \mu.
\end{equation}
Then
$$
{d\over dt}W_K(u, \tau)=-\int_M \frac{e^{2 K \tau}-1}{K}\left[\left\|\nabla_i \nabla_j f-\frac{K e^{2 K \tau}}{e^{2 K \tau}-1} g_{i j}\right
\|^2_{\operatorname{HS}}+R_{i j} f_i f_j+K|\nabla f|^2\right] u \mathrm{~d}\mu,
$$
where $\frac{\mathrm{d} \tau}{\mathrm{d} t}=1$. In particular, $W_K(f, \tau)$ is monotonic non-increasing along
 the heat equation $\partial_t f=\Delta f$  when  $\Ric\geq-K$.
\end{theorem}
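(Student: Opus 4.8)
The plan is to reduce everything to the first and second order entropy dissipation formulas and then complete a single square, the weight being chosen to absorb the Ricci lower bound. First I would pass from $f$ to the heat solution: writing $u=(4\pi\tau)^{-n/2}e^{-f}$ and using $\partial_\tau u=\Delta u$ gives $\partial_\tau f=\Delta f-|\nabla f|^2-\tfrac{n}{2\tau}$. With $H(u)=-\int_M u\log u\,d\mu$ and the Fisher information $I(u)=\int_M|\nabla f|^2 u\,d\mu=\int_M\tfrac{|\nabla u|^2}{u}\,d\mu$, and using $\int_M f u\,d\mu=H(u)-\tfrac{n}{2}\log(4\pi\tau)$ together with $\int_M u\,d\mu=1$, the functional collapses to $W_K=\sigma(\tau)I(u)+H(u)-\tfrac{n}{2}\log(4\pi\tau)+\beta(\tau)$, where $\sigma(\tau)=\tfrac{e^{2K\tau}-1}{2K}$ and $\beta(\tau)$ is the explicit $\tau$-only constant in the bracket. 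The decisive feature of $\sigma$ is the Riccati-type identity $\sigma'=1+2K\sigma$ with $\sigma(0)=0$; equivalently the model value $c(\tau):=\tfrac{\sigma'}{2\sigma}=\tfrac{Ke^{2K\tau}}{e^{2K\tau}-1}$ is precisely the coefficient that appears in the target square.

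Next I would differentiate. The classical Bakry--\'Emery entropy dissipation formulas on a closed manifold (the identities \eqref{1H1} and \eqref{2H2} recalled after Theorem \ref{1HH2}, with $L=\Delta$ so that $\Ric(L)=\Ric$) give $\tfrac{d}{d\tau}H=I$ and $\tfrac{dI}{d\tau}=\tfrac{d^2}{d\tau^2}H=-2\int_M[\|\nabla^2 f\|_{\rm HS}^2+\Ric(\nabla f,\nabla f)]u\,d\mu$, using $\nabla^2\log u=-\nabla^2 f$. Hence $\tfrac{d}{d\tau}W_K=\sigma' I+\sigma I'+I-\tfrac{n}{2\tau}+\beta'$. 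I would then insert the expansion $\|\nabla^2 f-c\,g\|_{\rm HS}^2=\|\nabla^2 f\|_{\rm HS}^2-2c\,\Delta f+nc^2$ into $\sigma I'$ and use $\int_M\Delta f\,u\,d\mu=\int_M|\nabla f|^2 u\,d\mu=I$ to reorganize. The coefficients of $I$ match exactly because $\sigma'=1+2K\sigma$ (this is where the choice of $\sigma$ pays off), while the remaining scalar terms are absorbed through the relation $\beta'-\tfrac{n}{2\tau}=-2\sigma nc^2$, which is exactly the defining property of the constant $\beta$. Collecting terms yields $\tfrac{d}{d\tau}W_K=-2\sigma\int_M[\|\nabla^2 f-c\,g\|_{\rm HS}^2+\Ric(\nabla f,\nabla f)+K|\nabla f|^2]u\,d\mu$, i.e. the asserted formula with $2\sigma=\tfrac{e^{2K\tau}-1}{K}$.

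Monotonicity is then immediate: under $\Ric\ge -K$ one has $\Ric(\nabla f,\nabla f)+K|\nabla f|^2\ge 0$, and since $\sigma\ge 0$ for $K\ge 0$, $\tau>0$, the whole integrand is nonnegative, so $\tfrac{d}{dt}W_K\le 0$. I expect the only real work to lie in the algebraic bookkeeping of the middle step---verifying that the $I$-coefficients cancel via the Riccati identity $\sigma'=1+2K\sigma$ and that the scalar terms telescope through $\beta'$; on a closed manifold there are no integrability or regularity issues to settle, so this reorganization carries the essential content. The same completion of the square also exposes the rigidity regime: $\tfrac{d}{dt}W_K=0$ at some $\tau$ would force $\nabla^2 f=c\,g$ together with $\Ric(\nabla f,\nabla f)=-K|\nabla f|^2$, the gradient-shrinking-soliton structure adapted to the lower bound $-K$.
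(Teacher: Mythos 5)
Your overall strategy --- write $W_K=\sigma I+H-\tfrac n2\log(4\pi\tau)+\beta(\tau)$ with $\sigma=\tfrac{e^{2K\tau}-1}{2K}$, differentiate using the Bakry--\'Emery dissipation identities \eqref{1H1}--\eqref{2H2}, and complete the square with $c=\tfrac{\sigma'}{2\sigma}=\tfrac{Ke^{2K\tau}}{e^{2K\tau}-1}$ --- is exactly the method used in the paper. Note that the paper never proves Theorem \ref{Wang} itself (it is quoted from \cite{wang}); what it proves is the RCD$(K,n,N)$ extension, Theorem \ref{WangRCD}, and that proof is your computation carried out in the variable $\log u$ instead of $f$, plus the trace-splitting needed on RCD spaces. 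Your intermediate steps are correct: $\int_M fu\,d\mu=H-\tfrac n2\log(4\pi\tau)$, $H'=I$, $I'=H''=-2\int[\|\nabla^2f\|_{\rm HS}^2+\Ric(\nabla f,\nabla f)]u\,d\mu$, $\int_M\Delta f\,u\,d\mu=I$, and the matching of the $I$-coefficients via $\sigma'=1+2K\sigma$ (both sides carry $(\sigma'+1)I$).

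The gap is the one step you decline to carry out: ``the remaining scalar terms are absorbed through the relation $\beta'-\tfrac{n}{2\tau}=-2\sigma nc^2$, which is exactly the defining property of the constant $\beta$.'' This is not a defining property; it is an identity that must be verified, and for the $\beta$ actually printed in \eqref{Wk}, namely $\beta=\tfrac n2\bigl(\log\tfrac{2K\tau}{e^{2K\tau}-1}-\sigma\bigr)-n$, it is \emph{false}. A direct computation gives
\begin{equation*}
\beta'-\frac{n}{2\tau}=-nc-\frac n2\,\sigma',\qquad\text{whereas}\qquad
-2n\sigma c^2=-\frac{n(\sigma')^2}{2\sigma}=-nc\sigma'=-nc-nK\sigma',
\end{equation*}
where the last equality uses $c\sigma'=c+K\sigma'$. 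The two sides differ by $n\sigma'\bigl(K-\tfrac12\bigr)$, so your chain of equalities closes only when $K=\tfrac12$; already at $K=0$ it fails by $-\tfrac n2$, i.e.\ the stated $W_K$ does not reduce to Ni's entropy. The identity you need holds precisely when the bracket in \eqref{Wk} carries $e^{2K\tau}-1$ (that is, $2K\sigma$) in place of $\tfrac{e^{2K\tau}-1}{2K}$: with $\beta=\tfrac n2\bigl(\log\tfrac{2K\tau}{e^{2K\tau}-1}-(e^{2K\tau}-1)\bigr)-n$ one gets $\beta'-\tfrac{n}{2\tau}=-nc-nK\sigma'=-2n\sigma c^2$ as required. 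That is exactly the normalization the paper itself uses in its RCD version \eqref{WK2} of Theorem \ref{WangRCD}, and the one consistent with recovering Ni's entropy in the limit $K\to0$; so the statement \eqref{Wk} as printed contains a typo. Your argument becomes complete and correct once you (i) perform this one-line verification of $\beta'$ and (ii) state the theorem with the corrected bracket. As written, however, the proposal asserts without proof the single identity on which the result hinges --- and on which, for the statement as printed, it actually fails.
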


The next result extends Theorem \ref{Wang} to RCD$(K, N)$ spaces. 

\begin{theorem}\label{WangRCD}
	Let $(X, d, \mu)$ be an
 RCD$(K, n, N)$ space and $u$ be a solution to heat equation. Let
 \begin{equation}\label{WK2}
	W_K(u,t)= \frac{e^{2Kt}-1}{2K}\frac{dH}{dt}+H-\frac{N}{2}\log (4\pi t) +\frac{N}{2}\left[\log \frac{2Kt}{e^{2Kt}-1}-(e^{2Kt}-1)\right]-N.
\end{equation}
 Then, we have
 \begin{equation*}
			\begin{split}
				\frac{d}{dt}W_K(u, t)
				&=-\frac{e^{2Kt}-1}{K}\int_X \left[\left\|\nabla^2\log u+\frac{K e^{2 K t}} {e^{2 K t}-1} {\bf g}\right\|_{\rm HS}^2+({\bf Ric_{N, n}}(\Delta)-K {\bf g})(\nabla\log u, \nabla \log u)\right]ud\mu\\
				& \hskip2cm -{(N-n)}\frac{e^{2Kt}-1}{K}\int_X \left[ {({\rm Tr}\nabla^2-\Delta)\log u\over N-n}-{K e^{2Kt}\over e^{2Kt}-1} \right]^2ud\mu.
				\end{split}
		\end{equation*}
		In particular, we have
	\begin{equation*}
		\frac{d}{dt}W_K(u,t)\leq -\frac{e^{2Kt}-1}{NK}\int_X \Big(\Delta\log u+\frac{NKe^{2Kt}}{e^{2Kt}-1}\Big)^2 u d\mu\leq 0.
	\end{equation*}
	Moreover, the $\frac{d}{dt}W_K(u,t)=0$  if and only if 
 \begin{equation*}
			\begin{split}
			\nabla^2\log u=\frac{K e^{2 K t}} {1-e^{2 K t}} {\bf g}, \ \ \ \ {\bf Ric_{N, n}}(\Delta)(\nabla \log u, \nabla \log u)=K|\nabla\log u|^2,
			\end{split}
		\end{equation*}
			and
			 \begin{equation*}
			\begin{split}
			{({\rm Tr}\nabla^2-\Delta)\log u\over N-n}={K e^{2Kt}\over e^{2Kt}-1}.
				\end{split}
		\end{equation*}	
		
	\end{theorem}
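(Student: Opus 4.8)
The plan is to differentiate the functional $W_K(u,t)$ of \eqref{WK2} directly in $t$ and then substitute the entropy dissipation identities of Theorem \ref{1HH2}. Writing $H=H(u(t))$, $H'=\frac{dH}{dt}$, $H''=\frac{d^2H}{dt^2}$, Theorem \ref{1HH2} together with Lemma \ref{Fisher} gives $H'=I(u)=\int_X u|\nabla\log u|_w^2\,d\mu=-\int_X u\Delta\log u\,d\mu$ and $H''=-2\int_X{\bf \Gamma_2}(\log u,\log u)\,u\,d\mu$. Differentiating $W_K$ is then elementary: the prefactor $\frac{e^{2Kt}-1}{2K}$ in front of $H'$ produces the term $\frac{e^{2Kt}-1}{2K}H''$ together with an extra multiple of $H'$, while the logarithmic pieces $-\frac{N}{2}\log(4\pi t)$ and $\frac{N}{2}\log\frac{2Kt}{e^{2Kt}-1}$ have derivatives whose $\frac{N}{2t}$ parts cancel. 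Collecting everything, $\frac{d}{dt}W_K$ is $\frac{e^{2Kt}-1}{2K}H''$ plus a time-dependent multiple of $H'$ plus an explicit term constant in $x$.

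The core of the argument is to recombine this into the squared form of the statement. Substituting $H''=-2\int{\bf \Gamma_2}\,u\,d\mu$ and writing the $H'$ contributions alternately as $\int u\Delta\log u\,d\mu$ and $\int u|\nabla\log u|_w^2\,d\mu$ through the two representations of the Fisher information, one is led to $\frac{d}{dt}W_K=-\frac{e^{2Kt}-1}{K}\int_X\big[{\bf \Gamma_2}(\log u,\log u)+2a\,\Delta\log u+Na^2-K|\nabla\log u|_w^2\big]u\,d\mu$, where the parameter $a=a(t)=\frac{Ke^{2Kt}}{e^{2Kt}-1}$ is forced by requiring the cross term in $\Delta\log u$ to match the coefficient of ${\bf g}$ in the Hessian square. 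Splitting $\Delta={\rm Tr}\nabla^2+(\Delta-{\rm Tr}\nabla^2)$ and applying the Riemannian Bochner formula \eqref{RBFna} of Definition \ref{defRCDKnN} with this $a$, namely ${\bf \Gamma_2}+2a\,{\rm Tr}\nabla^2\log u+na^2=\|\nabla^2\log u+a{\bf g}\|_{\rm HS}^2+{\bf Ric_{N, n}}(\Delta)(\nabla\log u,\nabla\log u)+\frac{|({\rm Tr}\nabla^2-\Delta)\log u|^2}{N-n}$, the leftover $(\Delta-{\rm Tr}\nabla^2)\log u$ and $a^2$ terms complete into the single square $(N-n)\big[\frac{({\rm Tr}\nabla^2-\Delta)\log u}{N-n}-a\big]^2$, exactly as in the proof of Theorem \ref{WnNK}. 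This yields the asserted identity for $\frac{d}{dt}W_K(u,t)$.

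The monotonicity and rigidity parts are then immediate. Under the RCD$(K,n,N)$ condition \eqref{RKNn} one has $({\bf Ric_{N, n}}(\Delta)-K{\bf g})(\nabla\log u,\nabla\log u)\ge 0$, while $\|\nabla^2\log u+a{\bf g}\|_{\rm HS}^2$ and the trace-defect square are nonnegative; since $\frac{e^{2Kt}-1}{K}>0$ for every $K\neq 0$ and $t>0$, this already gives $\frac{d}{dt}W_K\le 0$. For the sharper bound one applies Cauchy--Schwarz twice: $\|\nabla^2\log u+a{\bf g}\|_{\rm HS}^2\ge\frac{1}{n}({\rm Tr}\nabla^2\log u+na)^2$, and then $\frac{1}{n}({\rm Tr}\nabla^2\log u+na)^2+\frac{1}{N-n}\big(({\rm Tr}\nabla^2-\Delta)\log u-(N-n)a\big)^2\ge\frac{1}{N}(\Delta\log u+Na)^2$, with $Na=\frac{NKe^{2Kt}}{e^{2Kt}-1}$; discarding the (nonnegative) Ricci term gives $\frac{d}{dt}W_K\le-\frac{e^{2Kt}-1}{NK}\int_X\big(\Delta\log u+\frac{NKe^{2Kt}}{e^{2Kt}-1}\big)^2u\,d\mu\le 0$. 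Equality at some $t$ forces every nonnegative term to vanish, yielding $\nabla^2\log u=-a{\bf g}=\frac{Ke^{2Kt}}{1-e^{2Kt}}{\bf g}$, ${\bf Ric_{N, n}}(\Delta)(\nabla\log u,\nabla\log u)=K|\nabla\log u|_w^2$, and $\frac{({\rm Tr}\nabla^2-\Delta)\log u}{N-n}=\frac{Ke^{2Kt}}{e^{2Kt}-1}$, which is precisely the stated rigidity.

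The two steps demanding genuine care, as opposed to routine calculation, are the following. First, the analytic justification of differentiating under the integral sign, of the integrations by parts, and of the distributional Bochner formula on the non-smooth space; all of these are supplied by the regularization scheme and the integrability hypotheses inherited from Theorem \ref{1HH2}, so no new analytic input is needed. Second, the exact bookkeeping of the exponential coefficients, that is, verifying that with $a(t)=\frac{Ke^{2Kt}}{e^{2Kt}-1}$ the constant $Na^2$ term and the several cross terms reassemble precisely into the two squares; this is mechanical but delicate, and I expect it to be the main place where errors could creep in. As a consistency check one may let $K\to 0$, where $a\to\frac{1}{2t}$ and $\frac{e^{2Kt}-1}{K}\to 2t$, recovering the RCD$(0,n,N)$ formula of Theorem \ref{WnNK=0}.
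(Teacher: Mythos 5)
Your strategy coincides with the paper's own proof: differentiate \eqref{WK2}, substitute the dissipation identities $H''=-2\int_X{\bf \Gamma_2}(\log u,\log u)\,u\,d\mu$ and $H'=\int_X|\nabla\log u|_w^2\,u\,d\mu=-\int_X\Delta\log u\,u\,d\mu$, and complete the square via \eqref{RBFna} with $a=\frac{Ke^{2Kt}}{e^{2Kt}-1}$. The problem is that the one step you explicitly defer as ``mechanical but delicate'' is precisely the step that fails, so the intermediate identity on which everything rests is false. Differentiating \eqref{WK2} gives
\begin{equation*}
\frac{d}{dt}W_K(u,t)=\frac{e^{2Kt}-1}{2K}H''+\left(e^{2Kt}+1\right)H'-\frac{NKe^{4Kt}}{e^{2Kt}-1},
\end{equation*}
whereas your proposed bracket, expanded using $\int_X\Delta\log u\,u\,d\mu=-H'$, $\int_X|\nabla\log u|_w^2\,u\,d\mu=H'$, $2a\cdot\frac{e^{2Kt}-1}{K}=2e^{2Kt}$ and $K\cdot\frac{e^{2Kt}-1}{K}=e^{2Kt}-1$, equals
\begin{equation*}
-\frac{e^{2Kt}-1}{K}\int_X\Big[{\bf \Gamma_2}(\log u,\log u)+2a\,\Delta\log u+Na^2-K|\nabla\log u|_w^2\Big]u\,d\mu
=\frac{e^{2Kt}-1}{2K}H''+\left(3e^{2Kt}-1\right)H'-\frac{NKe^{4Kt}}{e^{2Kt}-1}.
\end{equation*}
The two expressions differ by $2\left(e^{2Kt}-1\right)H'=2\left(e^{2Kt}-1\right)I(u(t))$, which is nonzero for every $K\neq0$, $t>0$ and nonconstant $u$. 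No choice of the completion parameter can repair this: matching the coefficient of $H'$ forces $a=\frac{K}{e^{2Kt}-1}$, while matching the constant term forces $a=\pm\frac{Ke^{2Kt}}{e^{2Kt}-1}$, and these are incompatible. Your consistency check at $K\to0$ cannot detect the defect, because the discrepancy $2(e^{2Kt}-1)I$ is $O(K)$ and vanishes exactly in the limit you test.

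In fairness, the paper's own proof commits the identical slip: in passing from its fourth to its fifth displayed equality, absorbing $-K{\bf g}$ into the Ricci term contributes $+(e^{2Kt}-1)H'$, replacing $-2a\,{\rm Tr}\nabla^2\log u$ by $-2a({\rm Tr}\nabla^2-\Delta)\log u$ contributes $+2e^{2Kt}H'$, and the discarded term $-(e^{2Kt}+1)\int_X\Delta\log u\,u\,d\mu$ equals $+(e^{2Kt}+1)H'$; since $(e^{2Kt}-1)+2e^{2Kt}\neq e^{2Kt}+1$, these contributions do not cancel and the excess $2(e^{2Kt}-1)H'$ is silently dropped. So the identity in the theorem is inconsistent with the definition \eqref{WK2} for $K\neq0$, and any correct write-up must first amend either \eqref{WK2} or the conclusion; in particular the stated equality (rigidity) characterization cannot hold as written. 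What does survive from your argument is the purely algebraic expansion of the claimed right-hand side into the bracket form above, the two Cauchy--Schwarz steps, and, for $K>0$ only, the monotonicity $\frac{d}{dt}W_K\leq 0$ together with the quantitative bound, since there the true derivative equals the claimed expression minus the nonnegative quantity $2(e^{2Kt}-1)I(u(t))$; for $K<0$ even monotonicity does not follow from this line of reasoning.
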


		In particular, when $K=0$, we recapture  \eqref{WfnKN} in Theorem \ref{WnNK=0}.   
		
\begin{theorem} (i.e., Theorem \ref{WnNK=0})
	Let $(X, d, \mu)$ be an
 RCD$(0, n, N)$ space and $u$ be a solution to heat equation. Let 
 \begin{equation}\label{WK2K=0}
	W(u,t)= 2t\frac{dH}{dt}+H-\frac{N}{2}\log (4\pi t) -N.
\end{equation}
 Then
 \begin{equation*}
			\begin{split}
				\frac{dW}{dt}
				&=-2t\int_X \left[\left\|\nabla^2\log u+{ {\bf g}\over 2t}\right\|_{\rm HS}^2+{\bf Ric_{N, n}}(\Delta)(\nabla\log u, \nabla \log u)\right]ud\mu\\
				& \hskip1.5cm \ \ -{2t\over N-n}\int_X \left[ ({\rm Tr}\nabla^2-\Delta)\log u-{N-n\over 2t} \right]^2ud\mu.
				\end{split}
		\end{equation*}
In particular, 
		\begin{equation*}
		\frac{dW}{dt}\leq -\frac{2t}{N}\int_X \Big(\Delta\log u+\frac{N}{2t}\Big)^2 u d\mu\leq 0.
	\end{equation*}
	Moreover, the $\frac{dW}{dt}(u,t)=0$  if and only if 
 \begin{equation*}
			\begin{split}
			\nabla^2\log u+ {{\bf g}\over 2t}=0, \ \ {\bf Ric_{N, n}}(\Delta)(\nabla \log u, \nabla \log u)=0,			\ \ ({\rm Tr}\nabla^2-\Delta)\log u={N-n\over 2t}.
				\end{split}
		\end{equation*}	
\end{theorem}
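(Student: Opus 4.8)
The functional $W(u,t)$ in \eqref{WK2K=0} is precisely the $K\to 0$ limit of the Wang-type functional $W_K(u,t)$ of \eqref{WK2}, and the asserted formula for $\frac{dW}{dt}$ is the corresponding limit of the formula for $\frac{d}{dt}W_K(u,t)$ established in Theorem \ref{WangRCD}. The plan is therefore to pass to the limit $K\to 0$ in the statement of Theorem \ref{WangRCD}. Equivalently—and this is the route I would follow to keep the argument self-contained—one observes that in this limit $W(u,t)$ coincides with the $W$-entropy $W_N(u)=\frac{d}{dt}\big(tH_N(u)\big)$ of Theorem \ref{WnNK=0}, so that both the differential formula and the rigidity statement are nothing but the content of that theorem, already proven on RCD$(0,n,N)$ spaces.

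\textbf{The limit computation.} First I would record the three elementary limits that drive everything. With $x=2Kt$ and $e^x-1=x+O(x^2)$ as $x\to 0$,
\begin{equation*}
\frac{e^{2Kt}-1}{2K}\longrightarrow t,\qquad \frac{e^{2Kt}-1}{K}\longrightarrow 2t,\qquad \frac{Ke^{2Kt}}{e^{2Kt}-1}\longrightarrow\frac{1}{2t}\qquad(K\to 0),
\end{equation*}
together with $K{\bf g}\to 0$ and $\frac N2\big[\log\frac{2Kt}{e^{2Kt}-1}-(e^{2Kt}-1)\big]\to 0$. The first limit applied to the definition \eqref{WK2} identifies the limiting functional with $W_N(u)=\frac{d}{dt}\big(tH_N(u)\big)=t\frac{dH}{dt}+H-\frac N2\log(4\pi t)-N$, the $W$-entropy of Theorem \ref{WnNK=0}. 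Applying the remaining limits to the right-hand side of the $\frac{d}{dt}W_K$ formula in Theorem \ref{WangRCD}: the prefactor $\frac{e^{2Kt}-1}{K}$ tends to $2t$, the correction $\frac{Ke^{2Kt}}{e^{2Kt}-1}{\bf g}$ inside the Hilbert–Schmidt norm tends to $\frac{{\bf g}}{2t}$, the term $-K{\bf g}$ disappears from $({\bf Ric_{N,n}}(\Delta)-K{\bf g})$, and in the last integral $\frac{Ke^{2Kt}}{e^{2Kt}-1}\to\frac{1}{2t}$. Collecting these reproduces exactly the asserted expression for $\frac{dW}{dt}$, namely \eqref{WfnKN}.

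\textbf{Inequality and rigidity.} With the differential formula in hand, the two upper bounds follow from the weak Bochner inequality exactly as in the proof of Theorem \ref{WnNK}: splitting $\Delta=\operatorname{Tr}\nabla^2+(\Delta-\operatorname{Tr}\nabla^2)\log u$ and completing squares, the Hilbert–Schmidt, Bakry–Émery and trace-defect integrands recombine so that $\frac{dW}{dt}\le -\frac{2t}{N}\int_X\big(\Delta\log u+\frac{N}{2t}\big)^2u\,d\mu\le 0$. The equality case is obtained by forcing each of the two manifestly nonnegative integrands in the formula to vanish $\mu$-a.e.: the first gives $\nabla^2\log u+\frac{{\bf g}}{2t}=0$ together with ${\bf Ric_{N,n}}(\Delta)(\nabla\log u,\nabla\log u)=0$, and the second gives $(\operatorname{Tr}\nabla^2-\Delta)\log u=\frac{N-n}{2t}$, which are precisely the stated rigidity conditions.

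\textbf{Main obstacle.} The only genuine point requiring care is the interchange of the limit $K\to 0$ with the time-derivative and the spatial integration, i.e. verifying that $\lim_{K\to 0}\frac{d}{dt}W_K(u,t)$ equals $\frac{d}{dt}$ of the limiting functional. Since all the $K$-dependent coefficients above converge locally uniformly in $t$ on $(0,\infty)$, while the $K$-independent integrands $\|\nabla^2\log u\|_{\rm HS}^2$, ${\bf Ric_{N,n}}(\Delta)(\nabla\log u,\nabla\log u)$ and $|(\operatorname{Tr}\nabla^2-\Delta)\log u|^2$ are integrable against $u\,d\mu$ under the hypotheses inherited from Theorem \ref{1HH2}, this is a routine application of the dominated convergence theorem. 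Should one prefer to bypass the limit altogether, the identification $W=W_N$ reduces the entire statement to Theorem \ref{WnNK=0}, leaving nothing further to prove.
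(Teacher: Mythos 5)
Your proposal is correct and takes essentially the same route as the paper: the paper presents this statement precisely as the $K=0$ specialization of Theorem \ref{WangRCD} (``we recapture \eqref{WfnKN} in Theorem \ref{WnNK=0}''), which is exactly what your explicit $K\to 0$ limit of the coefficients $\frac{e^{2Kt}-1}{2K}\to t$, $\frac{e^{2Kt}-1}{K}\to 2t$, $\frac{Ke^{2Kt}}{e^{2Kt}-1}\to\frac{1}{2t}$ carries out, with the alternative reduction to the already-proved Theorem \ref{WnNK=0} also being the paper's implicit argument. You are moreover right to identify the limiting functional as $W_N=t\frac{dH}{dt}+H-\frac{N}{2}\log(4\pi t)-N$: the coefficient $2t$ in the paper's display \eqref{WK2K=0} is a typo, since only the coefficient $t$ is consistent with the asserted formula for $\frac{dW}{dt}$.
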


\noindent{\bf Proof of Theorem \ref{WangRCD}}. 
	By the weak Bochner formula on RCD$(K, n, N)$ spaces, we have
			\begin{equation*}
			\begin{split}
				\frac{dW_K}{dt}&= \frac{e^{2Kt}-1}{2K}\frac{d^2H}{dt^2}+\Big(e^{2Kt}+1\Big)\frac{dH}{dt}-N\frac{Ke^{4Kt}}{e^{2Kt}-1}\\
				&=-\frac{e^{2Kt}-1}{K}\int_X \Gamma_2(\log u, \log u)ud\mu+\Big(e^{2Kt}+1\Big)\int_X |\nabla \log u|_w^2ud\mu-N\frac{Ke^{4Kt}}{e^{2Kt}-1}\\
				&=-\frac{e^{2Kt}-1}{K}\int_X \left[\|\nabla^2\log u\|_{\rm HS}^2+{\bf Ric}(\nabla\log u, \nabla \log u)\right]ud\mu+\Big(e^{2Kt}+1\Big)\int_X |\nabla \log u|_w^2ud\mu-N\frac{Ke^{4Kt}}{e^{2Kt}-1}\\
			&=-\frac{e^{2Kt}-1}{K}\int_X \left[\left\|\nabla^2\log u+\frac{K e^{2 K t}} {e^{2 K t}-1} {\bf g}\right\|_{\rm HS}^2+{\bf Ric_{N, n}}(\Delta)(\nabla\log u, \nabla \log u)\right]ud\mu\\
				&\ \ -\frac{e^{2Kt}-1}{K} \int_X \left[-2\frac{K e^{2 K t}} {e^{2 K t}-1}{\rm Tr}\nabla^2 \log u-n\left(\frac{K e^{2 K t}}{e^{2 K t}-1}\right)^2+{|({\rm Tr}\nabla^2-\Delta)\log u|^2\over N-n}\right]ud\mu\\
				&\ \  \ \  \ -(e^{2Kt}+1)\int_X \Delta \log u ud\mu-N\frac{Ke^{4Kt}}{e^{2Kt}-1}\\
				&=-\frac{e^{2Kt}-1}{K}\int_X \left[\left\|\nabla^2\log u+\frac{K e^{2 K t}} {e^{2 K t}-1} {\bf g}\right\|_{\rm HS}^2+({\bf Ric_{N, n}}(\Delta)-K{\bf g})(\nabla\log u, \nabla \log u)\right]ud\mu\\
				&\ \  \ \  \ -\frac{e^{2Kt}-1}{K} \int_X \left[-2\frac{K e^{2 K t}} {e^{2 K t}-1}({\rm Tr}\nabla^2 -\Delta)\log u+{|({\rm Tr}\nabla^2-\Delta)\log u|^2\over N-n}\right]ud\mu-(N-n)\frac{Ke^{4Kt}}{e^{2Kt}-1}\\
				&=-\frac{e^{2Kt}-1}{K}\int_X \left[\left\|\nabla^2\log u+\frac{K e^{2 K t}} {e^{2 K t}-1} {\bf g}\right\|_{\rm HS}^2+({\bf Ric_{N, n}}(\Delta)-K{\bf g})(\nabla\log u, \nabla \log u)\right]ud\mu\\
				&\ \ \  \ \ \ -{(N-n)}\frac{e^{2Kt}-1}{K}\int_X \left[ {({\rm Tr}\nabla^2-\Delta)\log u\over N-n}-{K e^{2Kt}\over e^{2Kt}-1} \right]^2ud\mu.
				\end{split}
		\end{equation*}
	This finishes the proof of Theorem \ref{WangRCD}. \hfill $\square$

\subsection{Extension of Ye's entropy formula to RCD spaces}

In \cite{Ye}, Ye  introduced the logarithmic entropy functional  and proved its monotonicity on closed Ricci flow. In \cite{Wu},  
Wu extended Ye's result to compact Riemannian manifolds with nonnegative Ricci curvature.

\begin{theorem}[\cite{Ye, Wu}]
	Let $M$ be a compact Riemannian manifold, $u$ a positive solution to the heat equation $\partial_t u=\Delta u$, and $a$ be a constant such that $\int_M|\nabla u|^2 d \mu+a> 0$. 
	Define Ye's logarithmic entropy functional as follows 
$$
\mathcal{Y}_a(u, t):=-\int_M u^2 \log u^2 d \mu+\frac{N}{2} \log \left(\int_M|\nabla u|^2 d \mu+a\right)-4 a t .
$$
Then
$$
\begin{aligned}
{d\over dt}\mathcal{Y}_a(u, t)& \leq -\frac{n}{4 \omega} \int_M\left[\left\|-2 \frac{\nabla^2 u}{u}+2 
\frac{\nabla u \otimes \nabla u}{u^2}-\frac{4 \omega}{n} 
g\right\|^2_{\operatorname{HS}}+4\operatorname{Ric}\left(\frac{\nabla u}{u}, \frac{\nabla u}{u}\right)\right] u^2 d \mu \\
& =-\frac{n}{4 \omega} \int_M\left(\left|f_{i j}-\frac{4 \omega}{n} g_{i j}\right|^2_{\operatorname{HS}}+R_{i j} 
f_i f_j\right) \frac{e^{-f}}{(4 \pi t)^{n / 2}} d \mu,
\end{aligned}
$$
where $u=(4 \pi t)^{-n / 4} e^{-f / 2}$ and $\omega:=\omega(u(x, t), a)=\int_M|\nabla u|^2 d \mu+a>0$.
\end{theorem}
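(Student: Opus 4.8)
The plan is to prove the monotonicity formula by a direct computation on the closed manifold $M$, following the Perelman--Ni--Li scheme but with the time scale dictated by the quantity $\omega(t):=\int_M|\nabla u|^2\,d\mu+a$ rather than by $t$ itself. First I would set $f:=-2\log u$, so that $u^2=e^{-f}$ up to the normalizing factor $(4\pi t)^{-n/2}$, and observe that the Hilbert--Schmidt integrand is exactly $\nabla^2 f-\frac{4\omega}{n}g$, since $-2\frac{\nabla^2u}{u}+2\frac{\nabla u\otimes\nabla u}{u^2}=-2\nabla^2\log u=\nabla^2 f$, while $\mathrm{Ric}(\nabla f,\nabla f)=4\,\mathrm{Ric}(\frac{\nabla u}{u},\frac{\nabla u}{u})$. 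The goal is then to show that $\frac{d}{dt}\mathcal Y_a$ equals, up to a dropped nonnegative fluctuation term that produces the inequality sign, the weighted integral $-\frac{n}{4\omega}\int_M\big[\|\nabla^2f-\frac{4\omega}{n}g\|_{\mathrm{HS}}^2+\mathrm{Ric}(\nabla f,\nabla f)\big]\,u^2\,d\mu$.

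Second, I would differentiate $\mathcal Y_a$ term by term. Using $\partial_tu=\Delta u$ and integration by parts, the time derivative of the entropy part $-\int_Mu^2\log u^2\,d\mu$ is computed directly; the term $\frac{n}{2}\log\omega$ contributes $\frac{n}{2}\frac{\omega'(t)}{\omega(t)}$ with $\omega'(t)=\frac{d}{dt}\int_M|\nabla u|^2\,d\mu=-2\int_M(\Delta u)^2\,d\mu$; and $-4at$ contributes $-4a$. The crucial analytic input is the integrated Bochner formula on the closed manifold, $\int_M(\Delta u)^2\,d\mu=\int_M\|\nabla^2u\|_{\mathrm{HS}}^2\,d\mu+\int_M\mathrm{Ric}(\nabla u,\nabla u)\,d\mu$, which converts the $\omega'$ term into $-\frac{n}{\omega}\int_M\big[u^2\|\frac{\nabla^2u}{u}\|_{\mathrm{HS}}^2+u^2\mathrm{Ric}(\frac{\nabla u}{u},\frac{\nabla u}{u})\big]\,d\mu$. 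The Ricci part already matches the target; what remains is to turn $\frac{\nabla^2u}{u}$ into the Hessian of $\log u$ and to produce the shift $\frac{4\omega}{n}$.

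Third, I would combine the entropy-derivative terms with the Bochner output. Writing $\frac{\nabla^2u}{u}=\nabla^2\log u+\frac{\nabla u\otimes\nabla u}{u^2}$ splits $\|\frac{\nabla^2u}{u}\|^2$ into $\|\nabla^2\log u\|^2$ plus cross terms $2\nabla^2\log u(\frac{\nabla u}{u},\frac{\nabla u}{u})$ and $\frac{|\nabla u|^4}{u^4}$; the terms generated by $\frac{d}{dt}(-\int u^2\log u^2)$ are exactly of this shape and should cancel the excess, after further integration by parts against the weight $u^2$. The coefficient $\frac{4\omega}{n}$ is then forced by completing the square: the trace identity $\int_M\Delta f\,u^2\,d\mu=\int_M|\nabla f|^2\,u^2\,d\mu=4\int_M|\nabla u|^2\,d\mu=4(\omega-a)$, together with the contribution of $-4a$, makes $\frac{4\omega}{n}\int_M\langle\nabla^2 f,g\rangle u^2\,d\mu$ match the linear term in the expansion of $\|\nabla^2f-\frac{4\omega}{n}g\|^2$. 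Finally, under $\mathrm{Ric}\ge 0$ the bracket is nonnegative, giving $\frac{d}{dt}\mathcal Y_a\le0$, with equality forcing $\nabla^2 f=\frac{4\omega}{n}g$ and $\mathrm{Ric}(\nabla f,\nabla f)=0$.

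The main obstacle will be the bookkeeping in the third step: reconciling the \emph{unweighted} Hessian-of-$u$ terms delivered by the integrated Bochner formula with the \emph{$u^2$-weighted} Hessian-of-$\log u$ terms in the target, and verifying that the seemingly extraneous pieces (such as $\int_M\log u\,|\nabla u|^2\,d\mu$ coming from the entropy derivative and $\int_M\frac{|\nabla u|^4}{u^4}u^2\,d\mu$ coming from the square expansion) cancel precisely, leaving a single perfect square. A secondary point is justifying the inequality sign: it should arise from discarding a nonnegative fluctuation term of Cauchy--Schwarz type, $\frac1n(\int_M\Delta f\,u^2\,d\mu)^2\le\frac1n\int_M(\Delta f)^2u^2\,d\mu$, so one must check that the discarded term is indeed nonnegative and that the reparametrized time scale $\omega(t)$ is exactly what closes the square.
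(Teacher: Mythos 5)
Your overall strategy (differentiate $\mathcal{Y}_a$ in $t$, insert Bochner, complete the square) is viable in principle, and it is closer to the paper's \emph{first} proof of its RCD analogue (Theorem \ref{log}, proved through the entropy dissipation formulas and the decomposition in Theorem \ref{HRiccati}) than to the Ye--Wu argument that the paper reproduces as its second proof, in which monotonicity of $\mathcal{Y}_a$ is deduced from monotonicity of the $W$-entropy evaluated at the optimal scale $\tau=n/(8\omega)$ (the two lemmas on minimizing $s\,\omega-\tfrac{n}{2}\log s$ and the envelope bound relating $W$ and $\mathcal{Y}_a$). However, your second step contains a genuine inconsistency that prevents the bookkeeping in your third step from closing. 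In this theorem the object evolving by the heat equation is the probability density $u^2=(4\pi t)^{-n/2}e^{-f}$, not $u$ itself (this is how Ye, Wu, and the paper's own Theorem \ref{log} are set up; the phrase ``$u$ a positive solution'' in the cited statement is loose). You cannot have both of your two derivative formulas at once. If you read the hypothesis literally, $\partial_t u=\Delta u$, then $\omega'=-2\int_M(\Delta u)^2\,d\mu$ is correct, but
\begin{equation*}
\frac{d}{dt}\Big(-\int_M u^2\log u^2\,d\mu\Big)=2\int_M|\nabla u|^2\log u^2\,d\mu+6\int_M|\nabla u|^2\,d\mu,
\end{equation*}
and the term $2\int_M|\nabla u|^2\log u^2\,d\mu$ has no counterpart on the right-hand side and cannot cancel against anything (moreover $\int_M u^2\,d\mu$ is then not conserved, so the entropy term is not even the entropy of a probability measure). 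If instead $u^2$ solves the heat equation, the entropy part behaves well, $\frac{d}{dt}(-\int_M u^2\log u^2\,d\mu)=4\int_M|\nabla u|^2\,d\mu=4(\omega-a)$, but then $u$ solves $\partial_t u=\Delta u+|\nabla u|^2/u$, so
\begin{equation*}
\omega'=-2\int_M(\Delta u)^2\,d\mu-2\int_M\frac{\Delta u\,|\nabla u|^2}{u}\,d\mu,
\end{equation*}
and your ``crucial analytic input,'' the unweighted integrated Bochner identity for $u$, is no longer the right tool. What closes the computation is the Bakry--\'Emery identity for $\log u^2$ weighted by $u^2$, namely $H''=-2\int_M\big[\|\nabla^2\log u^2\|_{\rm HS}^2+\mathrm{Ric}(\nabla\log u^2,\nabla\log u^2)\big]u^2\,d\mu$ with $H:=-\int_M u^2\log u^2\,d\mu$, i.e.\ exactly the second order entropy dissipation formula the paper works with.

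Once the setup is corrected the computation is short: $\frac{d\mathcal{Y}_a}{dt}=H'+\frac{n}{8\omega}H''-4a$ with $H'=4(\omega-a)$ and $\omega'=\tfrac14 H''$, and expanding $\|\nabla^2 f-\frac{4\omega}{n}g\|_{\rm HS}^2$ together with $\int_M\Delta f\,u^2\,d\mu=H'$ shows that the right-hand side of the theorem equals $H'+\frac{n}{8\omega}H''-4a$ \emph{exactly}. So your secondary concern about the origin of the inequality is resolved in a way you did not anticipate: for the stated full-Hessian bound no Cauchy--Schwarz fluctuation term is discarded at all --- on a closed manifold the relation is an identity, and the ``$\leq$'' merely reflects Ye--Wu's method of proof (integrating the $W$-entropy monotonicity along $\tau(t)=t-t_1+\frac{n}{8\omega}$ and comparing envelopes, which only controls a one-sided difference quotient). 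The Cauchy--Schwarz/variance step you describe is what is needed to pass to the weaker trace form $-\frac{1}{4\omega}\int_M(\Delta\log u^2+4\omega)^2u^2\,d\mu$, which is the version the paper actually proves on RCD spaces, where the pointwise Bochner identity is unavailable and only the weak Bochner inequality survives.
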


The following result extends Ye and Wu's result to RCD$(K, n, N)$ spaces. 

\begin{theorem}\label{log}
	{Let $(X, d, \mu)$ be an
 RCD$(K, n, N)$ and $u$ be a positive solution to the heat equation $\partial_t u=\Delta u$. Assume that $a$ is a constant such that $\frac{1}{4}\int_X \frac{|\nabla u |_w^2}{u}d\mu +a>0$ and $aK\leq 0$.} 
Define Ye's logarithmic entropy $\mathcal{Y}_a(u,t)$ as follows
\begin{equation*}
	\mathcal{Y}_a(u,t):=-\int_X u \log u d\mu +\frac{N}{2}\log\Big(\frac{1}{4}\int_X \frac{|\nabla u |_w^2}{u} +a\Big)+(NK-4a)t.
\end{equation*}
Then, we have
	\begin{equation*}
		\frac{d\mathcal{Y}_a}{dt}\leq - \frac{1}{4\omega}\int_X \Big[\Delta\log u+4\omega\Big]^2 u d\mu+{aNK\over w},
	\end{equation*}
	where $\omega=\frac{1}{4}\int_X \frac{|\nabla u |_w^2}{u}d\mu +a$.
	In particular, when $K=0$, it holds
	\begin{equation*}
		\frac{d\mathcal{Y}_a}{dt}\leq - \frac{1}{4\omega}\int_X \Big[\Delta\log u+4\omega\Big]^2 u d\mu\leq 0.
	\end{equation*}
\end{theorem}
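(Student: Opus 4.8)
The plan is to reduce everything to the first- and second-order entropy dissipation identities already established in Theorem \ref{1HH2}, together with the weak Bochner inequality \eqref{H2}, and then to finish by completing a square. First I would rewrite Ye's functional as
$$
\mathcal{Y}_a(u,t) = H(u) + \frac{N}{2}\log\omega + (NK-4a)t, \qquad \omega := \frac14\int_X \frac{|\nabla u|_w^2}{u}\,d\mu + a,
$$
and record the two facts that drive the computation. By the first-order dissipation formula \eqref{H1} and Lemma \ref{Fisher}, the Fisher information satisfies $H'(u) = \int_X \frac{|\nabla u|_w^2}{u}\,d\mu = -\int_X u\,\Delta\log u\,d\mu =: I(u)$; in particular $I(u) = 4\omega - 4a$, and since $\omega = \frac14 I(u)+a$ we also have $\omega' = \frac14 I'(u) = \frac14 H''(u)$.

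Next I would differentiate $\mathcal{Y}_a$ directly and substitute these identities, obtaining
$$
\frac{d\mathcal{Y}_a}{dt} = H'(u) + \frac{N}{2}\frac{\omega'}{\omega} + NK - 4a = (4\omega - 4a) + \frac{N\,H''(u)}{8\omega} + NK - 4a.
$$
I would then bound $H''(u)$ from above by the second-order dissipation inequality \eqref{H2},
$$
H''(u) \le -\frac{2}{N}\int_X u(\Delta\log u)^2\,d\mu - 2K\int_X u|\nabla\log u|_w^2\,d\mu,
$$
and insert $\int_X u|\nabla\log u|_w^2\,d\mu = I(u) = 4\omega - 4a$. The term $-2K(4\omega-4a)$ feeding through $\frac{N}{8\omega}$ cancels the free $NK$ and leaves
$$
\frac{d\mathcal{Y}_a}{dt} \le 4\omega - 8a - \frac{1}{4\omega}\int_X u(\Delta\log u)^2\,d\mu + \frac{aNK}{\omega}.
$$

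Finally I would recognize the right-hand side as a completed square: expanding $-\frac{1}{4\omega}\int_X (\Delta\log u + 4\omega)^2 u\,d\mu$ and using $\int_X u\,\Delta\log u\,d\mu = -I(u) = -(4\omega-4a)$ together with $\int_X u\,d\mu = 1$ reproduces exactly the constant $4\omega - 8a$, so the bound becomes the asserted inequality. The case $K=0$ is then immediate, while the hypothesis $aK\le 0$ forces $\tfrac{aNK}{\omega}\le 0$ (recall $\omega>0$), yielding monotonicity of $\mathcal{Y}_a$ in the general case as well. The only genuinely delicate point is not this algebra but the justification of the two dissipation identities and the Bochner inequality in the nonsmooth setting; these are precisely the content of Theorem \ref{1HH2} and hold under the integrability conditions assumed there, so I would simply invoke that theorem. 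I note that, although the statement is phrased on an RCD$(K,n,N)$ space, this particular inequality uses only the dimensional Bochner inequality \eqref{H2}, valid on any RCD$(K,N)$ space; the finer trace-splitting of $\Delta$ is not needed here.
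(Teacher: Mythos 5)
Your proof is correct, and it reaches the stated inequality by a genuinely leaner route than the paper's. The paper's first proof writes $\frac{d}{dt}\mathcal{Y}_a=\frac{N}{2(H'+4a)}\big[H''+\frac{2}{N}H'^2+2KH'+\frac{8a(NK-4a)}{N}\big]$ (the same derivative you compute, in disguise, since $H'+4a=4\omega$) and then substitutes the \emph{exact} Riccati identity of Theorem \ref{HRiccati}, which is where the RCD$(K,n,N)$ structure enters: that identity keeps the traceless-Hessian term, the $({\bf Ric_{N,n}}(\Delta)-K{\bf g})$ term and the trace-defect term, and the paper obtains an exact formula for $\frac{d}{dt}\mathcal{Y}_a$ before discarding these nonpositive deficits. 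You instead bound $H''$ directly by the weak Bochner/dissipation inequality \eqref{H2} of Theorem \ref{1HH2}, skip the trace-splitting machinery entirely, and finish with the same completion of the square (your algebra checks out: the cross term $-2K\cdot\frac{N}{8\omega}\cdot(4\omega-4a)$ indeed cancels the free $NK$ and leaves $\frac{aNK}{\omega}$, and the constant $4\omega-8a$ is exactly what the expanded square produces using $\int_X u\,\Delta\log u\,d\mu=-(4\omega-4a)$ and $\int_X u\,d\mu=1$). What each approach buys: yours is more economical and, as you correctly observe, proves the inequality on any RCD$(K,N)$ space, so the hypothesis RCD$(K,n,N)$ in the statement is not actually needed for this bound; the paper's longer route yields an exact identity whose discarded terms (traceless Hessian, Ricci deficit, trace defect) identify where equality can occur, which is the kind of information needed for rigidity statements elsewhere in the paper. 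The paper also offers a second, independent proof for $K=0$ in the style of Ye and Wu, based on the monotonicity \eqref{WW} of the $W$-entropy and two minimization lemmas; your argument is unrelated to that one. The only caveat, which you flag yourself, is that the stated theorem tacitly assumes the integrability hypotheses of Theorem \ref{1HH2} so that $H$ is twice differentiable and $\omega'=\frac14 H''$; invoking that theorem is exactly what the paper does as well.
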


We give two proofs of Theorem \ref{log}.  The first one follows the same argument as used in the proof of Theorem 5.1 in \cite{LiLi2024TMJ}, where S. Li and the first named author proved the Shannon entropy power formula on complete Riemannian manifolds with CD$(K, N)$ condition. The second one uses the same argument as used in Ye \cite{Ye} and Wu \cite{Wu}.  

	We first prove the following result on RCD$(K, n, N)$ space, which was first proved by S.Li and the first named author \cite{LiLi2024TMJ} on complete Riemannian manifolds with bounded geometry condition.

\begin{theorem}\label{HRiccati} On any RCD$(K, n, N)$ space, we have

\begin{equation}\label{LLHK}
\begin{split}
&H''+{2\over N}H'^2+2KH'\\
&=
-2\int_X \left[\left\|\nabla^2\log u-{{\rm Tr}\nabla^2\log u\over n}\right\|_{\rm HS}^2+({\bf Ric_{N, n}}(\Delta)-K{\bf g})(\nabla \log u, \nabla \log u)\right] ud\mu\\
& \hskip0.5cm  -{2(N-n)\over Nn}\int_X \left[\Delta \log u+{N\over N-n}({\rm Tr}\nabla^2-\Delta)\log  u\right]^2ud\mu -{2\over N}\int_X \left[\Delta \log u-\int_X \Delta \log u ud\mu\right]^2 ud\mu.
	\end{split}
\end{equation}
In particular, on RCD$(K, n, N)$ space, we have

\begin{equation*}
\begin{split}
H''+{2\over N}H'^2+2KH'\leq -{2\over N}\int_X \left[\Delta \log u-\int_X \Delta \log u ud\mu\right]^2 ud\mu.
	\end{split}
\end{equation*}

\end{theorem}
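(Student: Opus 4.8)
The plan is to read off $H'$ and $H''$ from the dissipation formulas already established in Theorem \ref{1HH2}, insert the $\Gamma_2$-decomposition coming from the Riemannian Bochner formula RBF$(n,N)$, and then reduce everything to one pointwise algebraic identity. Writing $f=\log u$, the first-order formula \eqref{H1} together with Lemma \ref{Fisher} gives $H'=I(u)=\int_X|\nabla f|_w^2\,u\,d\mu=-\int_X(\Delta f)\,u\,d\mu$, and the second-order formula \eqref{HH2} gives $H''=-2\int_X{\bf\Gamma_2}(f,f)\,u\,d\mu$. Setting $\bar b:=\int_X(\Delta f)\,u\,d\mu=-H'$, I record that $\tfrac{2}{N}H'^2=\tfrac{2}{N}\bar b^{\,2}$ and $2KH'=2K\int_X|\nabla f|_w^2\,u\,d\mu$, so the whole left-hand side of \eqref{LLHK} is an integral of a pointwise quantity against $u\,d\mu$, plus the single global constant $\tfrac{2}{N}\bar b^{\,2}$.

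Next I would substitute the decomposition guaranteed by RBF$(n,N)$ (Definition \ref{defRCDKnN}),
\begin{equation*}
{\bf\Gamma_2}(f,f)=\Big\|\nabla^2 f-\tfrac{{\rm Tr}\nabla^2 f}{n}{\bf g}\Big\|_{\rm HS}^2+\tfrac{|{\rm Tr}\nabla^2 f|^2}{n}+{\bf Ric_{N,n}}(\Delta)(\nabla f,\nabla f)+\tfrac{|{\rm Tr}\nabla^2 f-\Delta f|^2}{N-n},
\end{equation*}
into $H''$. The curvature term then combines with $2KH'$: since $2K\int_X|\nabla f|_w^2\,u\,d\mu=-2\int_X(-K{\bf g})(\nabla f,\nabla f)\,u\,d\mu$, adding $2KH'$ turns ${\bf Ric_{N,n}}(\Delta)$ into $({\bf Ric_{N,n}}(\Delta)-K{\bf g})$, producing the first bracket of \eqref{LLHK}. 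Writing $a:={\rm Tr}\nabla^2 f$ and $b:=\Delta f$, it then remains only to verify the pointwise identity
\begin{equation*}
\tfrac{a^2}{n}+\tfrac{(a-b)^2}{N-n}=\tfrac{N-n}{Nn}\Big[b+\tfrac{N}{N-n}(a-b)\Big]^2+\tfrac1N b^2 ,
\end{equation*}
which follows upon expanding both sides in $b$ and $c:=a-b$, each reducing to $\tfrac{b^2}{n}+\tfrac{2}{n}bc+\tfrac{N}{n(N-n)}c^2$. Integrating this against $u\,d\mu$ and then absorbing the leftover $\tfrac{2}{N}\bar b^{\,2}$ via $-\tfrac{2}{N}\int_X(b-\bar b)^2u\,d\mu=-\tfrac{2}{N}\int_X b^2u\,d\mu+\tfrac{2}{N}\bar b^{\,2}$ yields exactly \eqref{LLHK}.

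The inequality is then immediate. Under the RCD$(K,n,N)$-condition \eqref{RKNn} the curvature bracket satisfies $({\bf Ric_{N,n}}(\Delta)-K{\bf g})(\nabla f,\nabla f)\ge 0$ in the distributional sense, while $\big\|\nabla^2 f-\tfrac{{\rm Tr}\nabla^2 f}{n}{\bf g}\big\|_{\rm HS}^2\ge 0$ and $\big[\Delta f+\tfrac{N}{N-n}({\rm Tr}\nabla^2-\Delta)f\big]^2\ge 0$ are manifest; dropping these three nonpositive contributions on the right of \eqref{LLHK} leaves only the term $-\tfrac{2}{N}\int_X\big[\Delta\log u-\int_X\Delta\log u\,u\,d\mu\big]^2u\,d\mu$.

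I expect the real difficulty to lie not in this algebra but in the analytic justification on the singular space: one must know that $f=\log u$ is regular enough for RBF$(n,N)$ and the trace/Hessian splitting $\Delta={\rm Tr}\nabla^2+(\Delta-{\rm Tr}\nabla^2)$ to be meaningful in the distributional sense, and that the integrability hypotheses \eqref{C4} legitimize both the identity $H''=-2\int_X{\bf\Gamma_2}(f,f)u\,d\mu$ and the integration by parts hidden in the curvature term. These are precisely the ingredients supplied by Theorem \ref{1HH2}, Lemma \ref{Fisher}, and Definition \ref{defRCDKnN}, so the residual task is the bookkeeping indicated above.
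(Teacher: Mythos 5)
Your proposal is correct and follows essentially the same route as the paper's proof: both start from the dissipation formulas $H'=-\int_X\Delta\log u\,u\,d\mu$ and $H''=-2\int_X{\bf\Gamma_2}(\log u,\log u)\,u\,d\mu$, insert the RBF$(n,N)$ decomposition of ${\bf\Gamma_2}$, and your pointwise identity in $a={\rm Tr}\nabla^2\log u$, $b=\Delta\log u$ is exactly the paper's identity $(a+b)^2=\frac{a^2}{1+\varepsilon}-\frac{b^2}{\varepsilon}+\frac{\varepsilon}{1+\varepsilon}\left(a+\frac{1+\varepsilon}{\varepsilon}b\right)^2$ with $\varepsilon=\frac{N-n}{n}$, written in equivalent form. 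The only cosmetic difference is that you make explicit the bookkeeping (absorbing $\frac{2}{N}H'^2$ into the variance term and $2KH'$ into the curvature bracket) that the paper leaves implicit by citing its reference.
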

\begin{proof}

The proof is as the same as in \cite{LiLi2024TMJ}. By the second order entropy dissipation formula, we have

\begin{equation}\label{HDF2nd}
\begin{split}
	H''
	&=-2\int_X \left[\left\|\nabla^2\log u-{{\rm Tr}\nabla^2\log u\over n}\right\|_{\rm HS}^2+{\bf Ric_{N, n}}(\Delta)(\nabla \log u, \nabla \log u)\right] ud\mu\\
& \ \hskip2cm -2\int_X \left[{|{\rm Tr}\nabla^2\log u|^2\over n}+{|({\rm Tr}\nabla^2-\Delta)\log u|^2\over N-n}\right] 
ud\mu.
	\end{split}
\end{equation}
Using 
$$(a+b)^2={a^2\over 1+\varepsilon}-{b^2\over \varepsilon}+{\varepsilon\over 1+\varepsilon}\left(a+{1+\varepsilon\over \varepsilon}b\right)^2$$
and taking $a=\Delta \log u$, $b={\rm Tr}\nabla^2\log u-\Delta \log u$ and $\varepsilon={N-n\over n}$, we have\begin{equation*}
\begin{split}
{|{\rm Tr}\nabla^2 \log u|^2\over n}&={|\Delta \log u+({\rm Tr}\nabla^2-\Delta)\log  u|^2\over n}\\
&={|\Delta \log u|^2\over N}-{|({\rm Tr}\nabla^2-\Delta)\log  u|^2\over N-n}+{N-n\over Nn}\left[\Delta \log u+{N\over N-n}({\rm Tr}\nabla^2-\Delta)\log  u\right]^2.
\end{split}
\end{equation*}
Substituting it into \eqref{HDF2nd}, we  can derive \eqref{LLHK}. 

%
%
%

\end{proof}

%
%
%

\noindent{\bf Proof of Theorem \ref{log} using \eqref{LLHK} in Theorem \ref{HRiccati}}. 
By the definition of $\mathcal{Y}_a(u, t)$
\begin{equation*}
	\mathcal{Y}_a(u,t):=H (u(t))+\frac{N}{2}\log\Big(\frac{1}{4}{d\over dt}H(u(t)) +a\Big)+(NK-4a)t,
\end{equation*}
we have
\begin{equation*}
\begin{split}
	{d\over dt}\mathcal{Y}_a(u,t)
	&={N\over 2(H'+4a)}\left[H''+{2\over N}H'^2+2KH'+{8a(NK-4a)\over N}\right].
\end{split}
\end{equation*}
%
Using 

\begin{align*}
	\int_X \Big[\Delta \log u-\int_Xu  \Delta\log u d\mu \Big]^2ud\mu+16a^2
	=\int_X \Big[\Delta \log u-\int_Xu  \Delta\log u d\mu +4a\Big]^2ud\mu,
\end{align*}
we have
\begin{align*}
	{d\over dt}\mathcal{Y}_a(u, t)
&= -\frac{1}{4w} \int_X \Big[\Delta \log u+4w \Big]^2ud\mu-{N\over 4w}\int_X ({\bf Ric_{N, n}}(\Delta)-K{\bf g}) (\nabla \log u, \nabla \log u) ud\mu\\
&\hskip1cm -{N\over 4w}\int_M \left\|\nabla^2 \log u-\frac{{\rm Tr}\nabla^2\log u} {n}g\right\|^2_{\rm HS}ud\mu\\
&\hskip1.5cm-{N-n\over 4Nnw}
\int_M \Big[\Delta\log u+{N\over N-n}({\rm Tr}\nabla^2-\Delta) \log u\Big]^2ud\mu+{aNK\over w}.
\end{align*}
In particular, on RCD$(0, n, N)$ space, we have
\begin{align*}
	{d\over dt}\mathcal{Y}_a(u, t)\leq
	-\frac{1}{4w} \int_X \Big[\Delta \log u+4w \Big]^2ud\mu\leq 0.
	\end{align*}
This finishes the proof of Theorem \ref{log}. \hfill $\square$

\medskip

For the completeness of paper, we give the second proof of Theorem \ref{log} for the particular case $K=0$ using the same argument as given in  \cite{Ye} (see also Wu\cite{Wu}). Note that the $W$-entropy  can be rewritten as
\begin{equation*}
	W(f,\tau)=-\int_X u\log ud\mu+ 4\tau \Big(\frac{1}{4}\int_X \frac{|\nabla u |_w^2}{u}d\mu+a\Big)-\frac{N}{2}\log(4\pi e\tau)-4a\tau-{N\over 2}.
\end{equation*}

\begin{lemma}[\cite{Ye, Wu}]
	Let $u\in W^{1,2}(X)$ with $\int_X u d\mu=1$.
	Then the minimum of the function
	$$h(s)=s\Big(\frac{1}{4}\int_X \frac{|\nabla u |_w^2}{u}d\mu +a\Big)-\frac{N}{2}\log s,\quad\ s>0$$
is given by 
	$$\min h=\frac{N}{2}\log\Big(\frac{1}{4}\int_X \frac{|\nabla u |_w^2}{u}d\mu +a\Big)
	+\frac{N}{2}\left(1-\log\frac{N}{2}\right),$$
	{and the unique minimum point is achieved at}
	$$s=\frac{N}{2}\Big(\frac{1}{4}\int_X \frac{|\nabla u |_w^2}{u} d\mu+a\Big)^{-1}.$$
\end{lemma}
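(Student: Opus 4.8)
The plan is to treat $h$ as an elementary function of a single real variable on $(0,\infty)$ and to locate its unique minimizer by the first–derivative test, with the standing positivity hypothesis playing the sole structural role. First I would abbreviate the integral by setting $C:=\frac{1}{4}\int_X \frac{|\nabla u|_w^2}{u}\,d\mu+a$, which is strictly positive by assumption, so that $h(s)=Cs-\frac{N}{2}\log s$ is a well-defined smooth function on $(0,\infty)$.

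Next I would differentiate: $h'(s)=C-\frac{N}{2s}$ and $h''(s)=\frac{N}{2s^2}>0$ for every $s>0$. The strict positivity of $h''$ shows that $h$ is strictly convex on $(0,\infty)$, so any critical point is automatically its unique global minimizer. To guarantee that such a point actually lies in the open interval, I would note the boundary behaviour $h(s)\to+\infty$ both as $s\to 0^+$, since $-\frac{N}{2}\log s\to+\infty$, and as $s\to\infty$, since the linear term $Cs$ dominates; hence the infimum is attained in the interior. Solving $h'(s)=0$ gives $C=\frac{N}{2s}$, that is $s_*=\frac{N}{2C}=\frac{N}{2}\bigl(\frac{1}{4}\int_X \frac{|\nabla u|_w^2}{u}\,d\mu+a\bigr)^{-1}$, which is exactly the claimed minimum point and is positive precisely because $C>0$.

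Finally I would substitute $s_*$ back into $h$. Using $Cs_*=\frac{N}{2}$ and $\log s_*=\log\frac{N}{2}-\log C$, one computes $h(s_*)=\frac{N}{2}-\frac{N}{2}\bigl(\log\frac{N}{2}-\log C\bigr)=\frac{N}{2}\log C+\frac{N}{2}\bigl(1-\log\frac{N}{2}\bigr)$, and reinserting the expression for $C$ yields the stated value of $\min h$. There is essentially no analytical obstacle here; the one place where the hypothesis $\frac{1}{4}\int_X \frac{|\nabla u|_w^2}{u}\,d\mu+a>0$ is genuinely needed is to ensure simultaneously that $\log C$ in the answer is well defined and that the critical point $s_*$ is a legitimate interior point of the domain, so that the strict convexity argument delivers a true global minimum rather than merely a stationary point.
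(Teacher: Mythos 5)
Your proof is correct: the strict convexity of $h(s)=Cs-\frac{N}{2}\log s$, the boundary blow-up at $s\to 0^+$ and $s\to\infty$, and the first-order condition $s_*=\frac{N}{2C}$ give exactly the stated minimizer and minimum value. The paper itself offers no proof of this lemma (it simply cites Ye and Wu), and your elementary one-variable calculus argument is precisely the standard one behind the citation, so there is nothing to reconcile.
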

\begin{lemma}[\cite{Ye, Wu}]
	Let $a$ be such that $w:=\frac{1}{4}\int\frac{|\nabla u |^2_w}{u}d\mu+a> 0$. Then, for each $\tau \geq 0$, we have 
	\begin{equation}\label{Lemma2}
	W(f,\tau)\geq -\int_X u \log u d\mu +\frac{N}{2}\log
	\Big(\frac{1}{4}\int_X\frac{|\nabla u |^2_w}{u}d\mu +a\Big)-4a\tau +b(N),
	\end{equation}
	where $$b(N):=-\frac{N}{2}\log \pi -\frac{N}{2}\left(1+\log \frac{N}{2}\right),$$
	Moreover
	$$W(f,\frac{N}{8\omega})=-\int_X u \log u d\mu+\frac{N}{2}\log
	\Big(\frac{1}{4}\int_X\frac{|\nabla u |^2_w}{u}d\mu+a \Big)-\frac{Na}{2\omega}+b(N).$$
	\end{lemma}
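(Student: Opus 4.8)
The plan is to observe that, modulo additive constants that do not depend on $\tau$, the $\tau$-dependence of $W(f,\tau)$ is exactly that of the auxiliary function $h(s)=s\omega-\frac{N}{2}\log s$ from the preceding lemma, evaluated at $s=4\tau$, where I write $\omega:=\frac{1}{4}\int_X\frac{|\nabla u|_w^2}{u}d\mu+a$ (the hypothesis $\omega>0$ being precisely what guarantees $h$ is coercive and attains its finite minimum). First I would rewrite the given expression for the $W$-entropy as
$$W(f,\tau)=-\int_X u\log u\,d\mu+\Big[4\tau\omega-\frac{N}{2}\log(4\pi e\tau)\Big]-4a\tau-\frac{N}{2},$$
and then expand the bracketed term using $\log(4\pi e\tau)=\log(4\tau)+\log\pi+1$ to obtain
$$4\tau\omega-\frac{N}{2}\log(4\pi e\tau)=\Big[4\tau\omega-\frac{N}{2}\log(4\tau)\Big]-\frac{N}{2}\log\pi-\frac{N}{2}=h(4\tau)-\frac{N}{2}\log\pi-\frac{N}{2}.$$
Substituting this back yields the clean identity
$$W(f,\tau)=-\int_X u\log u\,d\mu+h(4\tau)-\frac{N}{2}\log\pi-N-4a\tau.$$

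Next I would invoke the minimization lemma, which gives $h(s)\geq\min h=\frac{N}{2}\log\omega+\frac{N}{2}\big(1-\log\frac{N}{2}\big)$ for every $s>0$; taking $s=4\tau$ produces the desired lower bound. It then remains only to check that the leftover constants assemble correctly:
$$\frac{N}{2}-\frac{N}{2}\log\frac{N}{2}-\frac{N}{2}\log\pi-N=-\frac{N}{2}\log\pi-\frac{N}{2}\Big(1+\log\frac{N}{2}\Big)=b(N),$$
which is exactly the advertised constant, so inequality \eqref{Lemma2} follows at once.

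For the ``Moreover'' assertion I note that the only inequality used above is $h(4\tau)\geq\min h$, so equality holds precisely when $4\tau$ coincides with the unique minimizer $s=\frac{N}{2\omega}$ furnished by the lemma, that is, when $\tau=\frac{N}{8\omega}$; at this value one has $-4a\tau=-\frac{Na}{2\omega}$, and the lower bound becomes the claimed exact formula for $W(f,\frac{N}{8\omega})$. Since the whole argument reduces to the preceding lemma together with the change of variable $s=4\tau$, there is no genuine analytic obstacle: the only point requiring care is the bookkeeping of the logarithmic constants, and in particular verifying that combining $-\frac{N}{2}\log\pi$, the term $-N$, and the minimum value of $h$ reproduces exactly $b(N)$.
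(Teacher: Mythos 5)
Your proof is correct and is exactly the argument the paper intends: the lemma is stated right after the paper's rewriting of $W(f,\tau)$ and the minimization lemma for $h(s)=s\omega-\frac{N}{2}\log s$, and your change of variable $s=4\tau$, the bound $h(4\tau)\geq \min h$, and the constant bookkeeping (which checks out, including the identification of $b(N)$ and the evaluation $-4a\tau=-\frac{Na}{2\omega}$ at the minimizer $\tau=\frac{N}{8\omega}$) are precisely how those two displayed ingredients combine. No gaps.
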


\noindent{\bf Second proof of Theorem \ref{log} following \cite{Ye, Wu}}.  
	Let $t_1\leq t_2$, $t\in[t_1,t_2] $ and   $\sigma>0$. Let 
	\begin{equation*}
		\tau(t):=t-t_1+\sigma.
	\end{equation*}
	 By  \eqref{WW}, we have 
	\begin{equation}\label{dW}
		\frac{dW}{dt}\leq -\frac{2\tau}{N} \int_X \Big(\Delta \log u+\frac{N}{2\tau}\Big)^2ud\mu.
	\end{equation}
	 Integrating \eqref{dW}  from $t_1$ to $t_2$, we obtain
	\begin{equation*}
		W(u(t_2),\tau(t_2))-W(u(t_1),\tau(t_1))\leq -\frac{2}{N} 
		\int^{t_2}_{t_1}\int_X \tau\Big(\Delta \log u+\frac{N}{2\tau}\Big)^2ud\mu dt.
	\end{equation*}
	Taking $\sigma=\frac{N}{8\omega}$,
 we have
	\begin{equation}\label{log1}
	\begin{split}
		W(u(t_2),t_2-t_1+\sigma)&\leq W(u(t_1),\sigma)-\frac{2}{N} 
		\int^{t_2}_{t_1}\int_X \tau\Big(\Delta \log u+\frac{N}{2\tau}\Big)^2ud\mu dt \\
			&=-\int_X u \log u d\mu\Big\vert_{t_1}+\frac{N}{2}\log\Big(\frac{1}{4}
			\int_X\frac{|\nabla u |^2_w}{u}d\mu+a\Big)\Big\vert_{t_1}-4a\sigma+b(N)\\
			&\hskip2cm -\frac{2}{N} 
		\int^{t_2}_{t_1}\int_X \tau\Big(\Delta \log u+\frac{N}{2\tau}\Big)^2ud\mu dt.
	\end{split}
	\end{equation}
	 On the other hand, the inequality \eqref{Lemma2} implies that
	\begin{equation}\label{log2}
		W(u(t_2),t_2-t_1+\sigma)\geq -\int_X 
		u\log u d\mu\Big\vert_{t_2}+\frac{N}{2}\log\Big(\frac{1}{4}
		\int_X\frac{|\nabla u |^2_w}{u}d\mu+a\Big)\Big\vert_{t_2}-4a(t_2-t_1+\sigma)+b(N).
	\end{equation}
	Combining \eqref{log1} with \eqref{log2}, we get
	\begin{equation*}
		\mathcal{Y}_a(u(t_2),t_2)\leq \mathcal{Y}_a(u(t_1),t_1) -
		\frac{2}{N} \int^{t_2}_{t_1}\int_X \tau\Big(\Delta \log u+\frac{N}{2\tau}\Big)^2ud\mu dt,
	\end{equation*}
	and hence
	\begin{equation*}
		\lim_{t_2\rightarrow t_1+}\frac{\mathcal{Y}_a(u(t_2),t_2)-
		\mathcal{Y}_a(u(t_1),t_1)}{t_2-t_1}\leq -{2\over N}\lim_{t_2\rightarrow t_1+}\frac{1}{t_2-t_1}
		\int^{t_2}_{t_1}\int_X \tau\Big(\Delta \log u+\frac{N}{2\tau}\Big)^2ud\mu dt.
	\end{equation*}
	Since $t_1$ is arbitrary and $4\tau =\frac{N}{2\omega}$, it follows that
	\begin{equation*}
	\begin{split}
		\frac{d\mathcal{Y}_a}{dt}\leq -{2\tau\over N}\int_X 
		\Big(\Delta \log u+\frac{N}{2\tau}\Big)^2ud\mu=- \frac{1}{4\omega}\int_X \Big(\Delta\log u+4\omega\Big)^2 u d\mu\leq 0.
	\end{split}
	\end{equation*}
	\hfill $\square$

\vspace{5mm}

Xiang-Dong Li, State Key Laboratory of Mathematical Sciences, Academy of Mathematics and Systems Science, Chinese Academy of Sciences, No. 55, Zhongguancun East Road, Beijing, 100190, China,  
and 
 School of Mathematics, University of Chinese Academy of Sciences, Beijing, 100049, China. Email: xdli@amt.ac.cn

\vspace{5mm}
Enrui Zhang,  School of Mathematics, University of Chinese Academy of Sciences, Beijing, 100049, China. Email: zhangenrui@amss.ac.cn

\end{document}